\documentclass{article}
\usepackage[english]{babel}
\usepackage{latexsym,amsmath,amsthm,amssymb}
\usepackage{mathrsfs,wasysym,graphicx,lscape}
\numberwithin{equation}{section}
\usepackage{algorithm2e}
\usepackage{esint}
\usepackage{color}
\usepackage[UTF8]{ctex}
\usepackage[color]{xy}
\usepackage{xcolor}

\usepackage[letterpaper,top=2cm,bottom=2cm,left=3cm,right=3cm,marginparwidth=1.75cm]{geometry}
\usepackage{amsmath}
\usepackage{graphicx}
\usepackage[colorlinks,linkcolor=blue, anchorcolor=blue, urlcolor=blue, citecolor=blue]{hyperref}
\usepackage{cleveref} 

\usepackage{mathrsfs}
\usepackage{graphics}
\usepackage{enumerate}
\title{\textbf{Long-time asymptotics and invariant manifold for the fractional 2D Navier-Stokes equation}}
\author{
  Jingchi Huang$^{2}$\footnotemark[1] \and
  Dandan Li$^{1}$\footnotemark[2] \and
  Shuai Zhang$^{2,1}$\footnotemark[3]
}

\date{}

\begin{document}
\newtheorem{theorem}{\indent Theorem}[section]
\newtheorem{proposition}[theorem]{\indent Proposition}
\newtheorem{definition}[theorem]{\indent Definition}
\newtheorem{lemma}[theorem]{\indent Lemma}
\newtheorem{remark}[theorem]{\indent Remark}
\newtheorem{corollary}[theorem]{\indent Corollary}
\maketitle

\footnotetext[1]{E-mail: huangjch25@mail.sysu.edu.cn}
\footnotetext[2]{Corresponding author. E-mail: dandanli@gbu.edu.cn}
\footnotetext[3]{E-mail: zhangsh278@mail2.sysu.edu.cn; 253621003@stu.gbu.edu.cn}
\footnotetext{2020 Mathematics Subject Classification: 76D05, 35R11, 35B40, 35B42, 37L10.}

\maketitle    

\begin{center} 
      {\small 1. School of Science, Great Bay University, Dongguan, Guangdong, 523000, P.R. China} \\
      {\small 2.  School of Mathematics, Sun Yat-sen University, Guangzhou, Guangdong, 510006, P.R. China}
\end{center}

\begin{abstract}
   We consider the two-dimensional incompressible Navier–Stokes equations with supercritical fractional dissipation in the vorticity formulation. In self-similar variables, we analyze the linearized operator in weighted spaces, prove a spectral gap, and construct a finite-dimensional local slow invariant manifold for small solutions. As a consequence, solutions are attracted to this manifold and admit an explicit long-time asymptotic expansion determined by the leading eigenmodes; additional moment conditions yield faster decay. We also show Lipschitz dependence of the manifold on the dissipation exponent, and recover the classical Navier–Stokes dynamics in the limit as the exponent approaches one.
\end{abstract}

\section{Introduction}

    Fractional Laplacians arise naturally in transport and diffusion models where the microscopic dynamics involves long jumps or memory effects. For $\alpha\in(0,1)$, the operator $(-\Delta)^\alpha$ is the generator of a symmetric $\alpha$-stable L\'evy process and provides a continuum description of anomalous diffusion and nonlocal transport \cite{NL 00, DA 09}. In incompressible fluid models, fractional dissipation offers a flexible way to encode nonlocal energy transfer and subgrid-scale effects, and it appears, for instance, in quasi-geostrophic and turbulence modeling frameworks \cite{LA 10, JW 04}. Mathematically, Navier--Stokes equations with fractional dissipation form a one-parameter family interpolating between Euler dynamics (formally $\alpha=0$) and the classical Navier--Stokes system ($\alpha=1$), with $\alpha$ controlling the strength and range of dissipation and hence the balance between nonlinearity and diffusion.

    In this paper, we consider the fractional Navier-Stokes equation in $\mathbb{R}^{2}$ has the form
    \begin{align}\label{v}
        \frac{\partial\textbf{u}}{\partial t}+(\textbf{u}\cdot \nabla)\textbf{u}=-(-\Delta)^{\alpha}\textbf{u}-\nabla p,\quad \nabla\cdot \textbf{u}=0,
    \end{align}
    where $\textbf{u}=\textbf{u}(x,y)\in \mathbb{R}^{2}$ is the velocity field, $p=p(x,t)\in \mathbb{R}$ is the pressure field, $\alpha\in (0,1)$ and $x\in \mathbb{R}^{2}$, $t\geq 0$. We set $\omega=\text{curl}\ \boldsymbol{u}=\partial_{x_{1}}u_{2}-\partial_{x_{2}}u_{1}$ to obtain the fractional vorticity equation 
    \begin{align}\label{ve}
        \partial_{t}\omega+(u\cdot \nabla) \omega =-(-\Delta)^{\alpha}\omega,
    \end{align}
    where $\omega=\omega(x,t)\in \mathbb{R}$, $x=(x_{1},x_{2})\in \mathbb{R}^{2}$, $t\geq 0$, $\alpha\in(0,1)$. In this paper, we primarily study the vorticity formulation \eqref{ve} rather than the primitive equation \eqref{v}, noting that the velocity field $\boldsymbol{u}$ can be recovered via the Biot-Savart law.

    Before turning to long-time asymptotics, we note that global well-posedness and regularity for fractional Navier-Stokes have been widely studied; see \cite{M.J 21, C.S 01, A.M 17, K.R 00, J.L 23,  H.W 22, Y.L 21, A.J 25, T.Y 15, L.E 20, T.T 09, X.J 22, X.Z 12, TC 021}. Within our framework we also prove global existence and uniqueness for sufficiently small initial data, using weighted a priori estimates adapted to fractional dissipation. Our main goal, however, is a precise description of the large-time dynamics. At the linear level, $(-\Delta)^\alpha$ generates $2\alpha$-stable semigroups whose kernels decay like $t^{-n/2\alpha}$, in contrast to the classical Gaussian heat kernel decay \cite{Z.T 03}, leading to algebraic decay bounds for weak solutions, including optimal $L^{2}$ estimates \cite{ZL 24}, and to forward self-similar solutions with universal profiles in hypodissipative and critical regimes \cite{B.C 19, B 24}. Yet recent sharp non-uniqueness results for Leray--Hopf solutions even in hyperdissipative settings \cite{C.S.M 23, L 24} suggest that the weak-solution class is too broad for a clean asymptotic theory. We therefore focus on global small smooth solutions and describe their long-time behaviour geometrically via invariant slow manifolds for the rescaled fractional flow.

    In the classical case $\alpha=1$, the long-time dynamics of 2D Navier--Stokes are well understood from a dynamical-systems viewpoint \cite{TC 02, TC 05, GR 02}. In self-similar vorticity variables the equation becomes an infinite-dimensional flow driven by an Ornstein--Uhlenbeck operator; its discrete spectrum yields finite-dimensional local invariant manifolds, and small trajectories are rapidly attracted to a slow manifold that governs the asymptotics. 

    The present work extends this program to the fractional Navier--Stokes equations with $\alpha\in(1/2,1)$. The extension is nontrivial because the linear part is a nonlocal operator whose spectrum depends on $\alpha$, requiring an $\alpha$-dependent spectral analysis. Moreover, Gaussian-kernel arguments for $L^{p}$ decay no longer apply; instead we combine a fractional Moser iteration (Theorem \ref{th:3.4}) with weighted a priori estimates based on Stroock--Varopoulos type inequalities (See Appendix C) to obtain global well-posedness for small data (Theorem \ref{global}) and sharp decay rates. A key qualitative difference concerns resonances. For $\alpha=1$, the evenly spaced Ornstein--Uhlenbeck spectrum produces resonances $\lambda_i+\lambda_j=\lambda_k$ and hence logarithmic corrections $t^{-\gamma}\log t$ in decay \cite{TC 02, TC 05}. In the fractional setting the eigenvalues depend nonlinearly on $\alpha$; thus $\alpha$ generically breaks the arithmetic progression and eliminates resonant logarithmic terms, yielding purely algebraic decay. This "de-resonating" mechanism may also help explain the smoother waveforms often observed in fractional models, e.g.\ in hemodynamics \cite{JMB 15, BJ 07}.

    In contrast to the classical case where the dimension of the slow subspace can be increased by selecting higher weights, in our regime the dimension is strictly limited and cannot be made arbitrarily large. Although this precludes capturing arbitrarily many algebraic modes within a single weighted space, it already reveals a robust structural difference from the classical setting and suffices to recover the leading and subleading algebraic decay rates, together with sharp threshold conditions in terms of vanishing moments.

    Geometrically, our approach is based on invariant slow manifolds for the rescaled fractional vorticity equation. In self-similar variables, the linearized dynamics are generated by the fractional operator $\mathcal L_\alpha$ on weighted spaces $L^2(m\alpha)$. For each $\alpha\in(1/2,1)$ one can choose $m$ and $k\ge0$ so that the spectrum splits into finitely many "slow" eigenvalues with $\Re\lambda\ge -k$ and a "fast" part strictly to the left of $-k$, yielding a decomposition $L^2(m\alpha)=E_{\mathrm{d}}\oplus E_{\mathrm{c}}$. We then construct a local slow manifold $W_{\mathrm{d}}^{\mathrm{loc}}=\text{graph}\ g(w)$ over $E_{\mathrm{d}}$ and show that all sufficiently small solutions are attracted to it at a rate at least $t^{-k}$. In this picture, algebraic decay exponents correspond to contraction rates in the fast directions, while the reduced dynamics on $W_{\mathrm{d}}^{\mathrm{loc}}$ determines the universal leading profile and permits higher-order asymptotic expansions, in the spirit of \cite{TC 02}.

    On the other hand, optimal algebraic decay for general Leray solutions is still unknown. In \cite{ZL 24}, one only obtains the bound $\limsup_{t\to\infty} t^{1/\alpha}\|u(t)\|_{L^2}\le C$, so even for small data the rescaled quantity $t^{1/\alpha}\|u(t)\|_{L^2}$ is merely bounded. Restricting to small smooth solutions, we prove in the regime $3/4<\alpha<1$ (with $m$ close to $4$) a sharp, moment-based criterion: suitable vanishing moments imply $\lim_{t\to\infty} t^{1/\alpha}\|u(t)\|_{L^2}=0$, and more generally yield the optimal decay rate associated with any prescribed set of vanishing moments (Theorem \ref{opt}). Geometrically, these conditions place the initial data in the fast spectral subspace, leading to decay strictly faster than the generic $t^{-1/\alpha}$ rate; the slow manifold is the threshold separating this optimal faster decay from the generic slow one.

    A further ingredient is stability with respect to $\alpha$. Using an $\alpha$-dependent spectral decomposition and invoking Kato's spectral perturbation theory (see \cite{TK 95}) to construct bounded linear isomorphisms that identify the varying subspaces $E_{\mathrm{d}}(\alpha)$ and $E_{\mathrm{c}}(\alpha)$ with fixed reference spaces, we show that the graph map $g_\alpha$ depends continuously on $\alpha$ in a Banach space of Lipschitz functions. In particular, $W_{\mathrm{d}}^{\mathrm{loc}}(\alpha)$ forms a robust family for $\alpha\in(1/2,1)$, and $W_{\mathrm{d}}^{\mathrm{loc}}(\alpha)\to W_{\mathrm{d}}^{\mathrm{loc}}(1)$ in graph norm as $\alpha\to1$, recovering the classical slow manifold of \cite{TC 02}. Indeed, our framework ensures that all derived objects—including the spectrum, invariant manifolds, and asymptotic expansions-converge to the corresponding results of the classical Navier-Stokes equations as the fractional Laplacian approaches the classical one. Thus the fractional manifolds can be viewed as nonlocal perturbations of the integer-order one, and the large-time scenario for small solutions persists continuously as $\alpha$ varies.

    The paper is organized as follows. Section 2 outlines the necessary preliminaries on function spaces and technical tools. In Section 3, we begin by surveying the (well-developed) existence and uniqueness theory for 2D the fractional vorticity equation. In Section 4, we introduce self-similar variables and the weighted spaces $L^{2}(m\alpha)$ to derive the rescaled equation, and establish global existence and uniqueness for the fractional vorticity equation within the present functional framework. We then construct the finite-dimensional invariant slow manifolds associated with prescribed decay rates (Theorem \ref{invarint}) and provide a geometric characterization of solutions decaying faster than the dominant algebraic rate (Theorem \ref{loc-strong}). We conclude this section by proving the structural stability of these manifolds with respect to $\alpha$. Section 5 applies this geometric framework to the long-time dynamics of small solutions, establishing the attraction to the slow manifold and deriving asymptotic expansions. Finally, technical details are relegated to the appendices: Appendix A covers the self-similar transform, Appendix B details the spectral analysis of $\mathcal L_{\alpha}$, and Appendix C contains auxiliary lemmas and estimates.


\section{Preliminaries}

    \textbf{Fraction Laplacian operator}:  Let \(\mathcal S(\mathbb R^N)\) denote the Schwartz space of rapidly decaying \(C^\infty\) functions on \(\mathbb R^N\). Throughout the paper we use the following convention for the Fourier transform:
    \begin{align*}
        \widehat{\phi}(\xi)=\mathcal F[\phi](\xi):=\int_{\mathbb R^N}e^{-i x\cdot \xi}\phi(x)\mathrm{d}x,
    \end{align*}
    and 
    \begin{align*}
        \phi(x)=\mathcal F^{-1}[\widehat{\phi}](x):=\frac{1}{(2\pi)^N}\int_{\mathbb R^N}e^{i x\cdot \xi} \widehat{\phi}(\xi)\mathrm{d}\xi .
    \end{align*}
    With this convention, Plancherel's identity takes the form
    \begin{align*}
        \|\phi\|_{L^2(\mathbb R^N)}^2=\frac{1}{(2\pi)^N} \|\widehat{\phi}\|_{L^2(\mathbb R^N)}^2 .
    \end{align*}
    In particular, all constants below may absorb harmless factors depending only on \(N\).

    Let $\Omega$ be a general, possibly nonsmooth and open set in $\mathbb{R}^{N}$. For any real $\alpha\in (0,1)$ and $p\in [1,\infty)$, we define the fractional Sobolev spaces $W^{\alpha,p}(\Omega)$ as follows 
    \begin{align}
        W^{\alpha,p}(\Omega)=\bigg\{u\in L^{p}(\Omega): \frac{|u(x)-u(y)|}{|x-y|^{\frac{N}{p}+\alpha}}\in L^{p}(\Omega\times \Omega)\bigg\};
    \end{align}
    endowed with the natural norm
    \begin{align}
        \|u\|_{W^{\alpha,p}(\Omega)}:=\bigg(\int_{\Omega}|u|^{p}dx+\int_{\Omega}\int_{\Omega}\frac{|u(x)-u(y)|^{p}}{|x-y|^{N+\alpha p}}\mathrm{d}x\mathrm{d}y\bigg)^{\frac{1}{p}},
    \end{align}
    where the term 
    \begin{align}\label{seminorm}
        [u]_{W^{\alpha,p}(\Omega)}:=\bigg(\int_{\Omega}\int_{\Omega}\frac{|u(x)-u(y)|^{p}}{|x-y|^{N+\alpha p}}\mathrm{d}x\mathrm{d}y\bigg)^{\frac{1}{p}}
    \end{align}
    is the so-called Gagliardo semi-norm of $u$, see \cite{EG 12}.

    In this paper, we focus on the case $p=2$. This is quite an important case since the fractional Sobolev spaces $W^{\alpha,2}(\mathbb{R}^{N})$ turn out to be Hilbert spaces. It is usually denoted by $H^{\alpha}(\mathbb{R}^{N})$. Moreover, it is strictly related to the fractional Laplacian operator $(-\Delta)^{\alpha}$, where, for any $u\in \mathcal{S}(\mathbb{R}^{N})$ and $\alpha\in (0,1)$, $(-\Delta)^{\alpha}$ it is defined as
    \begin{align}\label{F-f}
        (-\Delta)^{\alpha}u(x)&=C(N,\alpha)\text{P.V.} \int_{\mathbb{R}^{N}}\frac{u(x)-u(y)}{|x-y|^{N+2\alpha}}\mathrm{d}y\notag\\
        &=C(N,\alpha)\lim\limits_{\varepsilon\to 0^{+}}\int_{\mathbb{R}^{N}\backslash B_{\varepsilon}(x)}\frac{u(x)-u(y)}{|x-y|^{N+2\alpha}}\mathrm{d}y,
    \end{align}
    where $\text{P.V.}$ is a commonly used abbreviation for in the principal value sense and $C(N,\alpha)$ is dimensional constant that depends on $N$ and $\alpha$.
    \begin{remark}
        We also write \eqref{F-f} as 
        \begin{align*}
            (-\Delta)^{\alpha}u(x)=-\frac{1}{2}C(N,\alpha)\int_{\mathbb{R}^{N}}\frac{u(x+y)+u(x-y)-2u(x)}{|y|^{N+2\alpha}}\mathrm{d}y,\quad \forall x\in \mathbb{R}^{N},
        \end{align*}
        here we remove the $\mathrm{P.V.}$ Furthermore, we can infer Fourier from of the fractional Laplacian operator. 
        \begin{align}\label{fd}
            (-\Delta)^{\alpha}u=\mathcal{F}^{-1}(|\xi|^{2\alpha}(\mathcal{F}u)),\quad \forall \xi\in \mathbb{R}^{N}.
        \end{align}
    \end{remark}

    \begin{remark}
        Although \eqref{F-f}--\eqref{fd} are first defined for $u\in\mathcal {S}(\mathbb {R}^{N})$, they extend in a standard way to non-Schwartz functions by density or duality. This is  equivalent to defining the operator as a Fourier multiplier on $\mathcal {S}'(\mathbb {R}^{N})$; throughout this paper $(-\Delta)^\alpha u$ is understood in this extended distributional sense.
    \end{remark}

    \textbf{Notation}. Throughout the paper we adopt the following conventions. Vector–valued functions (for instance the velocity field) are written in boldface, such as $\mathbf{u}(x,t)$, while spatial points in $\mathbb{R}^2$ are denoted by standard italics, $x=(x_1,x_2)$. In either case $|\cdot|$ stands for the Euclidean norm on $\mathbb{R}^2$.  For $1\le p\le\infty$ we write $|f|_{p}$ for the norm of a scalar function $f$ in $L^p(\mathbb{R}^2)$, and if $\textbf{f}\in (L^{p}(\mathbb{R}^{2}))^{2}$ is vector–valued we set $|\textbf{f}|_{p}=|\ |\textbf{f}|\ |_{p}$. Weighted norms will play a central role. We introduce the weight $b(x) = \bigl(1+|x|^2\bigr)^{1/2}, x\in\mathbb{R}^2$, and for any $m\ge0$ define $\|f\|_{m\alpha} = |b^{m\alpha}f|_{2}$, so that $L^2(m\alpha)$ denotes the Hilbert space obtained as the completion of $C_c^\infty(\mathbb{R}^2)$ under the norm $\|\cdot\|_{m\alpha}$.  If $f\in C^0([0,T],L^p(\mathbb{R}^2))$ we often abbreviate $f(\cdot,t)$ simply by $f(t)$. Finally, the symbol $C$ denotes a positive constant whose value may change from line to line, even within the same chain of inequalities.

\section{The Cauchy problem of the vorticity equation}

    In two dimensions, the velocity field $\textbf{u}$ is defined in terms of the vorticity via the Biot-Savart law
    \begin{align}\label{B-S}
        \textbf{u}(x)=\frac{1}{2\pi}\int_{\mathbb{R}^{2}}\frac{(\textbf{x}-\textbf{y})^{\perp}}{|x-y|^{2}}\omega(y)\mathrm{d}y,\quad x\in \mathbb{R}^{2}.
    \end{align}
    Here and in what follows, if $x=(x_{1},x_{2})\in \mathbb{R}^{2}$, we define $\mathbf{x}=(x_{1},x_{2})^{\text{T}}$ and $\mathbf{x}^{\perp}=(-x_{2},x_{1})^{\text{T}}$.

    The following lemma collects useful estimates for the velocity $\textbf{u}$ in terms of $\omega$.
    \begin{lemma}\label{sol-e}
        Let $\textbf{u}$ be the velocity field obtained from $\omega$ via the equality \eqref{B-S}.
    
        \begin{itemize}
            \item [$(a)$] Assume that $1<p<2<q<\infty$ and $\frac{1}{q}=\frac{1}{p}-\frac{1}{2}$. If $\omega\in L^{p}(\mathbb{R}^{2})^{2}$, then $\textbf{u}\in L^{q}(\mathbb{R}^{2})^{2}$, and there exists $C>0$ such that 
        \begin{align}\label{uw}
            |u|_{q}\leq C|\omega|_{p}.
        \end{align}
            \item  [$(b)$] Assume that $1\leq p<2<q\leq \infty$, and define $\beta \in (0,1)$ by the relation $\frac{1}{2}=\frac{\beta}{p}+\frac{1-\beta}{q}$. If $\omega \in L^{p}(\mathbb{R}^{2})\ \cap\ L^{q}(\mathbb{R}^{2})$, and there exists $C>0$ such that 
        \begin{align}\label{uiw}
            |u|_{\infty} \leq C|\omega|^{\beta}_{p}|\omega|_{q}^{1-\beta}.
        \end{align}
            \item  [$(c)$] Assume that $1<p<\infty$. If $\omega\in L^{p}(\mathbb{R}^{2})$, then $\nabla\boldsymbol{u}\in L^{p}(\mathbb{R}^{2})^{4}$ and there exists $C>0$ such that
        \begin{align}\label{nu}
            |\nabla\boldsymbol{u}|_{p}\leq C|\omega|_{p}.
        \end{align}
        \end{itemize}
        In addition, $\mathrm{div}$ $\textbf{u}=0$ and $\mathrm{curl}$ $\textbf{u}\equiv \partial_{1}u_{2}-\partial_{2}u_{1}=\omega$.
        \end{lemma}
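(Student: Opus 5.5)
The three estimates all follow from the structure of the Biot--Savart kernel $K(x)=\frac{1}{2\pi}\frac{x^{\perp}}{|x|^{2}}$, which satisfies $|K(x)|\le \frac{1}{2\pi}|x|^{-1}$. The plan is to treat $\textbf{u}=K*\omega$ as a Riesz-potential-type operator for (a) and (b), and as a Calder\'on--Zygmund operator at the level of derivatives for (c). The identities at the end of the lemma are checked separately, using that $K=\nabla^{\perp}G$ with $G(x)=\frac{1}{2\pi}\log|x|$ the fundamental solution of $\Delta$ in $\mathbb{R}^2$.

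\textbf{Part (a).} Pointwise, $|\textbf{u}(x)|\le \frac{1}{2\pi}\int_{\mathbb{R}^2}|x-y|^{-1}|\omega(y)|\,\mathrm{d}y = C\,I_1(|\omega|)(x)$, where $I_1$ is the Riesz potential of order $1$ in dimension $2$. The Hardy--Littlewood--Sobolev inequality gives $|I_1 f|_q\le C|f|_p$ whenever $1<p<2$ and $\frac1q=\frac1p-\frac12$, and this yields \eqref{uw} directly.

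\textbf{Part (b).} Here I will split the kernel integral at a radius $R>0$:
\begin{align*}
|\textbf{u}(x)|\le \frac{1}{2\pi}\int_{|x-y|<R}\frac{|\omega(y)|}{|x-y|}\mathrm{d}y+\frac{1}{2\pi}\int_{|x-y|\ge R}\frac{|\omega(y)|}{|x-y|}\mathrm{d}y .
\end{align*}
Apply H\"older with $|\omega|_q$ on the near part and with $|\omega|_p$ on the far part. The near part is bounded by $|\omega|_q\,\big\||\cdot|^{-1}\big\|_{L^{q'}(B_R)}\le C|\omega|_q R^{1-2/q}$, which is finite because $q>2$ forces $q'<2$. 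The far part is bounded by $C|\omega|_p R^{1-2/p}$, which is finite because $p<2$ forces $p'>2$; in the case $p=1$ it reduces simply to $R^{-1}|\omega|_1$. Optimizing over $R$, that is, choosing $R$ so the two terms balance, gives $|u|_\infty\le C|\omega|_p^{\beta}|\omega|_q^{1-\beta}$. The exponent $\beta$ produced by the balancing agrees with the stated relation $\frac12=\frac{\beta}{p}+\frac{1-\beta}{q}$, which a scaling check confirms.

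\textbf{Part (c) and the identities.} Formally $\textbf{u}=\nabla^{\perp}\Delta^{-1}\omega$, so $\partial_j u_i$ equals $\pm R_jR_k\omega$ for Riesz transforms $R_j$, whose Fourier multipliers are $\xi_j\xi_k/|\xi|^2$. Since Riesz transforms are bounded on $L^p$ for $1<p<\infty$, this gives \eqref{nu}. The cleanest way to make this rigorous is to prove everything for $\omega\in C_c^\infty$, where $\textbf{u}=\nabla^\perp(G*\omega)$ holds classically, and then argue by density. Then $\operatorname{div}\nabla^\perp=0$ gives $\mathrm{div}\,\textbf{u}=0$, and $\operatorname{curl}\nabla^\perp=\Delta$ gives $\mathrm{curl}\,\textbf{u}=\Delta G*\omega=\omega$.

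\textbf{Main obstacle.} The estimates are classical, and the main care lies in the density step for (c). For $p\ge2$, $\omega\in L^p$ alone does not make the integral \eqref{B-S} converge. So \eqref{nu} has to be understood with $\nabla\textbf{u}$ defined as the $L^p$ limit of $\nabla(K*\omega_n)$ (equivalently, through the Riesz transform formula), with $\textbf{u}$ itself determined only up to constants. Once the estimates hold for smooth compactly supported $\omega$, the a priori bounds (a)--(c) pass to the closures and the differential identities hold in the sense of distributions.
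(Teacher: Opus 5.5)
Your proposal is correct and follows the same classical argument the paper relies on: the paper omits the proof and defers to Lemma 2.1 of Gallay--Wayne, which uses Hardy--Littlewood--Sobolev for (a), splitting the kernel at a radius $R$ and balancing the two H\"older bounds for (b), and Calder\'on--Zygmund (Riesz transform) bounds for (c). Your caveat is also well placed: for $p\ge 2$ the integral \eqref{B-S} need not converge, so (c) has to be read with $\nabla\mathbf{u}$ defined by density or through the Riesz-transform formula, and the statement as written glosses over this.
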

    
    \begin{proof}

        As the Biot-Savart law \eqref{B-S} is defined independently of the fractional dissipation, the estimate techniques for (a)-(c) extend naturally from the classical case (cf. \cite[Lemma 2.1]{TC 02}). The detailed proofs are therefore omitted.
    \end{proof}

   The following theorem addresses the global well-posedness of \eqref{ve} in $L^{1}(\mathbb{R}^{2})$.
    
    \begin{theorem}\label{gwp}
        For all initial data $\omega_{0}\in L^{1}(\mathbb{R}^{2})$, $1/2<\alpha<1$, \eqref{ve} has a unique global solution $\omega\in C^{0}([0,\infty),L^{1}(\mathbb{R}^{2}))\cap C^{0}((0,\infty),L^{\infty}(\mathbb{R}^{2}))$ with $\omega(0)=\omega_{0}$. Moreover, for all $p\in [1,+\infty)$, there exists $C_{p,\alpha}>0$ such that 
        \begin{align}\label{ww0}
            |\omega(t)|_{p}\leq \frac{C_{p,\alpha}|\omega_{0}|_{1}}{t^{\frac{1}{\alpha}(1-\frac{1}{p})}},\quad  t>0.
        \end{align}
        Finally, the total mass of $\omega$ is preserved under the evolution:
        \begin{align*}
            \int_{\mathbb{R}^{2}}\omega(x,t)\mathrm{d}x=\int_{\mathbb{R}^{2}}\omega_{0}\mathrm{d}x,\quad t\geq 0.
        \end{align*}
    \end{theorem}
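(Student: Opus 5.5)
The plan is to adapt the classical Ben-Artzi/Brezis and Gallay--Wayne argument for $\alpha=1$ to the fractional heat kernel $G_\alpha(x,t)=t^{-1/\alpha}G_\alpha(xt^{-1/(2\alpha)},1)$, where $\widehat{G_\alpha}(\xi,1)=e^{-|\xi|^{2\alpha}}$. The facts I will use are $G_\alpha(\cdot,1)\in L^1\cap L^\infty$, $G_\alpha\ge0$, $\int G_\alpha=1$, and $\nabla G_\alpha(\cdot,1)\in L^1\cap L^\infty$ (Pólya-type decay $|\nabla G_\alpha(x,1)|\lesssim (1+|x|)^{-3-2\alpha}$). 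These give
$$|e^{-t(-\Delta)^\alpha}f|_p\le Ct^{-\frac1\alpha(\frac1q-\frac1p)}|f|_q,\qquad |\nabla e^{-t(-\Delta)^\alpha}f|_p\le Ct^{-\frac1{2\alpha}-\frac1\alpha(\frac1q-\frac1p)}|f|_q .$$

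\emph{Step 1: local existence.} I write the Duhamel formulation
$$\omega(t)=e^{-t(-\Delta)^\alpha}\omega_0-\int_0^t \nabla\cdot e^{-(t-s)(-\Delta)^\alpha}(u\omega)(s)\,ds ,$$
using $\operatorname{div}u=0$. I solve it by a fixed point in the space of $\omega$ with $\sup_{0<t\le T}t^{\frac1\alpha(1-\frac1p)}|\omega(t)|_p$ small, for some $p\in(1,2)$, for instance $p=4/3$. This space is also used for the bound on $u$ from Lemma \ref{sol-e}(a), so $|u|_q\le C|\omega|_p$ with $q=4$. The time integral has singularities $(t-s)^{-\frac1{2\alpha}-\dots}$ and $s^{-\dots}$. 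Integrability near $s=t$ requires $\frac1{2\alpha}<1$, which is exactly where $\alpha>1/2$ is used, and the power count closes by scaling.

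For general $L^1$ data, smallness of the norm is not available. Instead, following Kato's trick, I use that $t^{\frac1\alpha(1-\frac1p)}|e^{-t(-\Delta)^\alpha}\omega_0|_p\to0$ as $t\to0$ (by density of $L^1\cap L^p$), which gives a local solution with continuity in $L^1$. Uniqueness in $C^0([0,T],L^1)$ follows from the Brezis argument: any mild solution satisfies $t^{\frac1\alpha(1-\frac1p)}|\omega(t)|_p\to0$, and then the contraction estimate applies. Smoothing then gives $\omega\in C^0((0,T],L^\infty)$ by bootstrapping the Duhamel formula in $p$, using (b) to control $|u|_\infty$.

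\emph{Step 2: global a priori bounds.} Conservation of mass follows by integrating the equation, since $\int\nabla\cdot(u\omega)=0$ and $\int(-\Delta)^\alpha\omega=0$ (the Fourier symbol vanishes at $0$). For the $L^1$ bound I split $\omega=\omega_+-\omega_-$. Testing with $\operatorname{sgn}\omega$ and using Kato's inequality $\int\operatorname{sgn}(\omega)(-\Delta)^\alpha\omega\ge0$ gives $|\omega(t)|_1\le|\omega_0|_1$, so the solution extends globally in $L^1$.

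The decay \eqref{ww0} is the main obstacle, because the Gaussian arguments (Carlen--Loss, Nash) must be replaced. I would prove the $L^1\to L^\infty$ bound via a Nash-type inequality for the fractional Laplacian. Set $p=2^k$ and test with $|\omega|^{p-2}\omega$. The Stroock--Varopoulos inequality
$$\int|\omega|^{p-2}\omega\,(-\Delta)^\alpha\omega\ge \frac{4(p-1)}{p^2}\,|(-\Delta)^{\alpha/2}|\omega|^{p/2}|_2^2$$
removes the transport term, since $\int u\cdot\nabla|\omega|^p=0$. Combining it with the fractional Nash inequality $|f|_2^{1+2\alpha}\le C|(-\Delta)^{\alpha/2}f|_2\,|f|_1^{2\alpha}$ (in two dimensions) gives a differential inequality. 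From it I obtain $|\omega(t)|_{2p}\le C t^{-\frac1\alpha(\frac1p-\frac1{2p})}|\omega(t/2)|_p$, and iterating Moser-style over dyadic times with summable constants yields $|\omega(t)|_\infty\le Ct^{-1/\alpha}|\omega_0|_1$. Interpolating this with the $L^1$ bound gives \eqref{ww0} for every $p$. Because the constants depend only on $\alpha$ and $p$, they are independent of the data, and the local solution continues globally.
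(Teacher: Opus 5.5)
Your Step 2 is essentially the paper's proof. The paper also tests with $|\omega|^{q-2}\omega$, removes the transport term, and applies the Stroock--Varopoulos inequality together with a fractional Nash inequality to $|\omega|^{q/2}$. It then iterates over $q=2^n$, integrating the resulting ODE over $[t/2,t]$ with an inductive bound on $|\omega|_{q/2}$. This is equivalent to your restart-at-$t/2$ scheme, provided, as you say, the dyadic time steps are arranged so that the constants are summable.

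Your Nash inequality has the wrong exponents. In $\mathbb{R}^2$ it reads $|f|_2^{1+\alpha}\le C\,|(-\Delta)^{\alpha/2}f|_2\,|f|_1^{\alpha}$. Your version fails the scaling $f\mapsto f(\lambda\cdot)$, and it would give $t^{-1/(4\alpha p)}$ instead of the $t^{-1/(2\alpha p)}$ you correctly state for the $L^p\to L^{2p}$ step. The paper proves nothing beyond these a priori bounds; existence and uniqueness are only asserted there.

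The genuine gap is Step 1: the power count does not close for $\alpha<1$. The equation is invariant under $\omega\mapsto\lambda^{2\alpha}\omega(\lambda x,\lambda^{2\alpha}t)$, so the scale-invariant space is $L^{1/\alpha}$ and $L^1$ is supercritical. $L^1$ is critical only at $\alpha=1$, which is exactly why the Ben-Artzi/Brezis/Gallay--Wayne argument works there. Concretely, set $\|\omega\|_K=\sup_{0<t\le T}t^{\frac1\alpha(1-\frac1p)}|\omega(t)|_p$ and use $|u|_q\le C|\omega|_p$ with $\frac1q=\frac1p-\frac12$. Whenever the time integral converges, the Duhamel term obeys
\[
t^{\frac1\alpha(1-\frac1p)}\Big|\int_0^t\nabla\cdot e^{-(t-s)(-\Delta)^\alpha}(u\omega)(s)\,ds\Big|_p\le C\,t^{\frac1\alpha(1-\frac1p)}\|\omega\|_K^2\int_0^t(t-s)^{-\frac{1}{\alpha p}}s^{-\frac2\alpha(1-\frac1p)}\,ds=C'\,t^{1-\frac1\alpha}\|\omega\|_K^2 .
\]
The factor $t^{1-1/\alpha}$ does not depend on $p$ and blows up as $t\to0$. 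So this estimate bounds nothing on any interval $(0,T]$. Kato's small-time trick cannot compensate: it gives $\|e^{-t(-\Delta)^\alpha}\omega_0\|_K\to0$, but not at the rate $t^{\frac1\alpha-1}$ that closing would require. The Brezis uniqueness argument runs through the same estimate and fails for the same reason. Note also that the singularity at $s=t$ is $(t-s)^{-1/(\alpha p)}$, not $(t-s)^{-1/(2\alpha)}$; for your choice $p=4/3$ it is integrable only when $\alpha>3/4$.

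What your scheme does give is local well-posedness for $\omega_0\in L^1\cap L^p$ with $\max\{\frac43,\frac1\alpha\}<p<2$. There the same estimates produce the factor $T^{1-\frac{1}{\alpha p}}$. Your data-independent a priori bounds then continue the solution globally, and mass conservation and \eqref{ww0} follow. For data merely in $L^1$, existence could be sought by approximation and compactness. However, uniqueness in $C^0([0,\infty),L^1)\cap C^0((0,\infty),L^\infty)$ needs an idea that appears neither in your proposal nor in the paper.
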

    \begin{proof}
        Throughout the proof, $C$ denotes a positive constant depending only on $\alpha$ and the dimension.

        For any $\phi\in L^{1}(\mathbb{R}^{2})\cap \dot H^{\alpha}(\mathbb{R}^{2})$,
        \begin{equation}\label{eq:Nash}
            |\phi|_{2}^{2}\le C\,
            \\|(-\Delta)^{\alpha/2}\phi|_{2}^{\frac{2}{1+\alpha}}\, |\phi|_{1}^{\frac{2\alpha}{1+\alpha}} \qquad\Longleftrightarrow\qquad |(-\Delta)^{\alpha/2}\phi|_{2}^{2}\ge c\,\frac{|\phi|_{2}^{2(1+\alpha)}}{|\phi|_{1}^{2\alpha}} .
        \end{equation}
        Indeed, by Plancherel and a frequency split, for $R>0$,
        \begin{align*}
            |\phi|_2^2=\int_{|\xi|\le R}|\hat\phi|^2+\int_{|\xi|\ge R}|\hat\phi|^2 \le C\Big(R^2|\phi|_1^2+R^{-2\alpha}|(-\Delta)^{\alpha/2}\phi|_2^2\Big),
        \end{align*}
        and optimizing in $R$ gives \eqref{eq:Nash}.
        
        For $q\ge2$,
        \begin{equation}\label{eq:SV}
            \int_{\mathbb{R}^{2}}|\omega|^{q-2}\omega\,(-\Delta)^{\alpha}\omega\mathrm{d}x \ \ge\ \frac{4(q-1)}{q^{2}}\, \big|(-\Delta)^{\alpha/2}\big(|\omega|^{q/2}\big)\big|_{2}^{2}.
        \end{equation}
        This follows from the bilinear form of $(-\Delta)^\alpha$ and the scalar inequality
        \begin{align*}
            \big(|a|^{q-2}a-|b|^{q-2}b\big)(a-b)\ \ge\ \frac{4(q-1)}{q^{2}}\big(|a|^{q/2}-|b|^{q/2}\big)^2, \qquad a,b\in\mathbb R.
        \end{align*}

        Let $y_q(t):=|\omega(t)|_q^q$. Multiplying $\partial_t\omega+u\cdot\nabla\omega+(-\Delta)^\alpha\omega=0$ by $|\omega|^{q-2}\omega$ and integrating, the transport term vanishes since $\nabla\!\cdot u=0$, hence
        \begin{align*}
            \frac{\mathrm{d}}{\mathrm{d}t}y_q(t) =-\,q\int_{\mathbb{R}^2}|\omega|^{q-2}\omega\,(-\Delta)^\alpha\omega\mathrm{d}x.
        \end{align*}
        Using \eqref{eq:SV} and then applying \eqref{eq:Nash} to $\phi=|\omega|^{q/2}$ such that $|\phi|_2^2=y_q$ and $|\phi|_1=|\omega|_{q/2}^{q/2}$, we obtain
        \begin{equation}\label{eq:ODE}
            \frac{d}{dt}y_q(t)\le -c_\alpha\,\frac{y_q(t)^{1+\alpha}}{|\omega(t)|_{q/2}^{\alpha q}}, \qquad q\ge2.
        \end{equation}

        For $q=2$, using $|\omega(t)|_1\le |\omega_0|_1$ in \eqref{eq:ODE} gives
        \begin{align*}
            \frac{d}{dt}y_2(t)\le -c_\alpha\,|\omega_0|_1^{-2\alpha}\,y_2(t)^{1+\alpha}.
        \end{align*}
        Equivalently $(y_2^{-\alpha})'(t)\ge c'_\alpha|\omega_0|_1^{-2\alpha}$, hence
        \begin{equation}\label{eq:L2-decay}
            |\omega(t)|_2^2=y_2(t)\le C_\alpha |\omega_0|_1^2\,t^{-1/\alpha},\qquad t>0.
        \end{equation}

        Fix $q=2^n\ge2$. Assume that for some $A_\alpha$ independent of $q$,
        \begin{equation}\label{eq:IH-unif}
            |\omega(t)|_{q/2}\le A_\alpha|\omega_0|_1\,t^{-(1-2/q)/\alpha},\qquad t>0.
        \end{equation}
        Plugging \eqref{eq:IH-unif} into \eqref{eq:ODE} yields
        \begin{align*}
            y_q'(t)\le -\kappa_\alpha\,A_\alpha^{-\alpha q}|\omega_0|_1^{-\alpha q}\,t^{q-2}\,y_q(t)^{1+\alpha}.
        \end{align*}
        Let $Z_q(t):=y_q(t)^{-\alpha}$, then $Z_q'(t)\ge \kappa_\alpha\,A_\alpha^{-\alpha q}|\omega_0|_1^{-\alpha q}t^{q-2}$. Integrating over $[t/2,t]$ gives
        \begin{align*}
            y_q(t)^{-\alpha}\ge C\,A_\alpha^{-\alpha q}|\omega_0|_1^{-\alpha q}\,t^{q-1},
        \end{align*}
        hence
        \begin{align*}
            |\omega(t)|_q =y_q(t)^{1/q} \le C^{1/q}A_\alpha|\omega_0|_1\,t^{-(1-1/q)/\alpha}.
        \end{align*}
        Enlarging $A_\alpha$ to absorb $\sup_{q\ge2}C^{1/q}<\infty$, we obtain the uniform estimate
        \begin{equation}\label{eq:Lq-unif}
            |\omega(t)|_q \le A_\alpha|\omega_0|_1\,t^{-(1-1/q)/\alpha},\qquad q=2^n.
        \end{equation}
        Letting $q\to\infty$ in \eqref{eq:Lq-unif} yields $|\omega(t)|_\infty\le A_\alpha|\omega_0|_1\,t^{-1/\alpha}$. Finally, by interpolation with $|\omega(t)|_1\le|\omega_0|_1$, we get \eqref{ww0} for all $p\in[1,\infty]$.
    \end{proof}

    If $\omega(t)$ is the solution of \eqref{ve} given by Theorem \ref{gwp}, it follows form Lemma \ref{sol-e} that the velocity field $\textbf{u}(t)$ constructed from $\omega(t)$ satisfies $\textbf{u}(t)\in C^{0}((0,\infty),L^{q}(\mathbb{R}^{2})^{2})$ for all $q\in (2,\infty]$, and that there exist constants $C_{q}>0$ such that
    \begin{align}\label{u-sol-e}
        |\textbf{u}(t)|_{q}\leq \frac{C_{q,\alpha}|\omega_{0}|_{1}}{t^{\frac{1}{\alpha}(\frac{1}{2}-\frac{1}{q})}}, \quad t>0.
    \end{align}
    \begin{remark}
        Stronger integrability conditions on the initial data yield explicit decay estimates for the solution. More precisely, for the 2D fractional Navier-Stokes equations with exponent $\alpha \in (0,1)$, Theorem 1.2 of \cite{ZL 24} states that if $\boldsymbol{u}_{0} \in L^{1}(\mathbb{R}^{2}) \cap H^{1}(\mathbb{R}^{2})$, then the solution satisfies
        \begin{align*}
            \boldsymbol{u} \in L^{\infty}(0,\infty; H^{1}(\mathbb{R}^{2})) \cap L^{2}_{\text{loc}}(0,\infty; H^{1+\alpha}(\mathbb{R}^{2}))
        \end{align*}
        and admits the decay rate
        \begin{align*}
            |\boldsymbol{u}(t)|_{r} \leq C (t+1)^{-\frac{1}{\alpha}(1-\frac{1}{r})}, \qquad 1 \leq r < \infty.
        \end{align*}
    \end{remark}

    In the general setting of Theorem 3.2, we have the following result concerning the asymptotic limits.
    \begin{theorem}\label{th:3.4}
        Assume that $\textbf{u}_{0}\in L^{2}(\mathbb{R}^{2})^{2}\cap L^{1}(\mathbb{R}^{2})$ and that $\omega_{0} \in L^{1}(\mathbb{R}^{2})$. Let $\omega(t)$ be the solution of \eqref{ve} given by Theorem \ref{gwp}. Then 
        \begin{align}\label{w-d}
            \lim\limits_{t\to \infty}t^{\frac{1}{\alpha}(1-\frac{1}{p})}|\omega(t)|_{p}=0,\quad 1< p\leq \infty.
        \end{align}
        If $\textbf{u}(t)$ is the velocity field obtained from $\omega(t)$ via the \eqref{B-S}, then
        \begin{align}\label{u-d}
            \lim\limits_{t\to \infty}t^{\frac{1}{\alpha}(\frac{1}{2}-\frac{1}{q})}|\textbf{u}(t)|_{q}=0,\quad 2\leq q\leq \infty.
        \end{align}
    \end{theorem}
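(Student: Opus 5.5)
The plan is to extract the extra decay from the energy equality for the velocity, which is available because $\textbf{u}_0\in L^2$. Then I would transfer it to vorticity norms through the nonlinear Nash-type ODE argument already used in the proof of Theorem~\ref{gwp}.

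\textbf{Step 1: the velocity in $L^2$.} Since $\textbf{u}_0\in L^2$, the energy identity for \eqref{v} gives
\begin{align*}
|\textbf{u}(t)|_2^2+2\int_0^t|(-\Delta)^{\alpha/2}\textbf{u}(s)|_2^2\,\mathrm{d}s=|\textbf{u}_0|_2^2 .
\end{align*}
In particular $|\textbf{u}(t)|_2$ is nonincreasing and bounded. Moreover, $\int_0^\infty |(-\Delta)^{\alpha/2}\textbf{u}|_2^2<\infty$.

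\textbf{Step 2: an $o$-bound for the enstrophy.} Note that $|(-\Delta)^{\alpha/2}\textbf{u}|_2$ is comparable to $|(-\Delta)^{(\alpha-1)/2}\omega|_2$. I would interpolate $|\omega|_2$ between this quantity and $|(-\Delta)^{\alpha/2}\omega|_2$. Combined with the enstrophy inequality $\frac{d}{dt}|\omega|_2^2=-2|(-\Delta)^{\alpha/2}\omega|_2^2$, this should give the following. Since $|\omega(t)|_2$ is nonincreasing and $\int_{t/2}^t|(-\Delta)^{\alpha/2}\textbf{u}|_2^2\to0$, we get $\int_{t/2}^t|\omega(s)|_2^{2}\,\ldots\,\mathrm{d}s=o(1)$.

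In practice, the cleanest route is a Fourier splitting argument on $\omega$ itself. Write
\begin{align*}
|\omega(t)|_2^2\le\int_{|\xi|\le R(t)}|\hat\omega|^2+R(t)^{-2\alpha}|(-\Delta)^{\alpha/2}\omega|_2^2 .
\end{align*}
The low-frequency part is controlled by $R^2|\hat\omega|^2\le R^2|\xi|^2|\hat{\textbf u}|^2$ restricted to small $|\xi|$. One needs $\hat{\textbf u}(\xi,t)\to0$ uniformly near $\xi=0$, which is the hypothesis $\textbf{u}_0\in L^1$ combined with the observation that $\hat{\textbf u}(0,t)=\int\textbf{u}(t)=0$ for divergence-free fields with $\omega\in L^1$; equivalently $\int\omega_0=0$ is forced. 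With $R(t)\sim t^{-1/(2\alpha)}$, this yields $t^{1/\alpha}\,|\omega(t)|_2^2\to0$, which is the case $p=2$ of \eqref{w-d}.

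\textbf{Step 3: other exponents.} For $p\in(1,2)$, I would interpolate $|\omega|_p\le|\omega|_1^{\theta}|\omega|_2^{1-\theta}$ using $|\omega|_1\le|\omega_0|_1$. This gives $t^{\frac1\alpha(1-\frac1p)}|\omega|_p\to0$.

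For $p>2$, I would rerun the dyadic iteration $q=2^n$ from the proof of Theorem~\ref{gwp} on $[t/2,t]$, with $|\omega_0|_1$ replaced by $\varepsilon(t)=\sup_{s\ge t/4}s^{1/(2\alpha)}|\omega(s)|_2$. The semigroup property, restarting at time $t/2$, keeps the constants uniform, and the iteration gives $|\omega(t)|_q\le A\,\varepsilon(t)\,t^{-(1-1/q)/\alpha}$ for all dyadic $q$. Letting $q\to\infty$ handles $p=\infty$, and interpolation covers the rest.

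\textbf{Step 4: the velocity.} Estimate \eqref{u-d} then follows from Lemma~\ref{sol-e}. Part (a) with $p<2$ handles $2<q<\infty$. Part (b) handles $q=\infty$, and its exponents match since scaling is consistent. For $q=2$, one uses Step 2's low-frequency bound directly on $\hat{\textbf u}$.

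\textbf{Main obstacle.} I expect Step 2 to be the main difficulty: producing an $o(\cdot)$ rather than an $O(\cdot)$ rate. It requires the low-frequency control of $\hat\omega$ to come with a vanishing factor. I would obtain that factor from the continuity of $\hat{\textbf u}(\cdot,t)$ at $0$, uniformly in $t$, using the Duhamel formula and $|\textbf u\otimes\textbf u|_1\le|\textbf u|_2^2$.
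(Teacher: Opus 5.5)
Your skeleton matches the paper's: Fourier splitting for the enstrophy, interpolation with $|\omega|_1\le|\omega_0|_1$ for $1<p<2$, a Nash-type iteration for $p>2$, and Lemma~\ref{sol-e} for the velocity. The low-frequency part is where you differ. The paper bounds $\int_{|\xi|\le R}|\hat\omega|^2\le R^2|\mathbf u(t)|_2^2$ and takes the vanishing factor from the imported fact $|\mathbf u(t)|_2\to0$. You instead want pointwise control of $\hat{\mathbf u}$ near $\xi=0$, using $\mathbf u_0\in L^1$.

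\textbf{The gap.} The justification you give at the step you single out as the crux fails. Duhamel, $|\mathbf u\otimes\mathbf u|_1\le|\mathbf u|_2^2$ and the energy inequality give only
\begin{align*}
|\hat{\mathbf u}(\xi,t)|\le|\hat{\mathbf u}_0(\xi)|+|\mathbf u_0|_2^2\,|\xi|\int_0^t e^{-(t-s)|\xi|^{2\alpha}}\,\mathrm{d}s\le|\mathbf u_0|_1+|\mathbf u_0|_2^2\,|\xi|^{1-2\alpha}.
\end{align*}
Since $\alpha>1/2$, the right side blows up as $\xi\to0$. So neither uniform-in-$t$ continuity of $\hat{\mathbf u}$ at $0$ nor even uniform boundedness follows from these ingredients. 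Along this route you would need something like $\int_0^\infty|\mathbf u(s)|_2^2\,\mathrm{d}s<\infty$, i.e.\ an energy decay rate, which is the kind of input the paper imports. The identity $\hat{\mathbf u}(0,t)=0$ does not help.

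\textbf{The repair.} No vanishing factor is needed. The weight $|\xi|^2$ from $\hat\omega=i\xi^\perp\cdot\hat{\mathbf u}$ and the small volume of the ball absorb the singularity. From the bound above,
\begin{align*}
\int_{|\xi|\le R}|\hat\omega|^2\,\mathrm{d}\xi\le\int_{|\xi|\le R}|\xi|^2|\hat{\mathbf u}|^2\,\mathrm{d}\xi\lesssim R^4+R^{6-4\alpha}.
\end{align*}
Take $R=\kappa(1+t)^{-1/(2\alpha)}$ with $2\kappa^{2\alpha}>(3-2\alpha)/\alpha$, and start the enstrophy inequality at some $t_0>0$, where $\omega(t_0)\in L^2$ by Theorem~\ref{gwp}. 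This gives $|\omega(t)|_2^2\lesssim(1+t)^{-(3-2\alpha)/\alpha}=o(t^{-1/\alpha})$, precisely because $\alpha<1$. The same computation with $\int_{|\xi|\le R}|\hat{\mathbf u}|^2\lesssim R^2+R^{4-4\alpha}$ gives $|\mathbf u(t)|_2^2\lesssim(1+t)^{-(2-2\alpha)/\alpha}$, which settles $q=2$.

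Corrected this way, your argument is self-contained, actually uses the hypothesis $\mathbf u_0\in L^1$, and yields explicit rates. The paper's low-frequency step is a single line, but it rests entirely on the external decay of $|\mathbf u(t)|_2$.

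\textbf{Step 3.} Your Step 3 is sound, and in one respect better than the paper's. A bound $|\omega(t)|_q\le A\,\varepsilon(t)\,t^{-(1-1/q)/\alpha}$ with $A$ independent of $q$ is what legitimately gives $p=\infty$. The paper instead proves the $o$-statement for each dyadic $p$ with $p$-dependent constants and then lets $p\to\infty$, interchanging limits without uniformity.

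To keep $A$ finite, integrate level $q=2^n$ over geometrically shrinking windows $[t_{n-1},s]$ inside $[t/4,t]$. Then $1-(t_{n-1}/t_n)^{q-1}$ stays bounded below, and the factors $(Cq^2)^{1/(\alpha q)}$ have a convergent product. Integrating over $[s/2,s]$ at every level would instead push the base time down to $0$ and lose $\varepsilon(t)\to0$.
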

    \begin{proof}
        Let $E(t):=|\omega(t)|_{2}^{2},\ \widehat{\omega}(\xi,t)=\mathcal{F}[\omega(\cdot,t)](\xi).$
        Multiply both sides of the equation \eqref{ve} by $\omega(x,t)$ and integrate over the entire space $\mathbb{R}^{2}$, $\int_{\mathbb{R}^{2}}\omega\partial_{t}\omega \mathrm{d}x+\int_{\mathbb{R}^{2}}\omega(\textbf{u}\cdot \nabla)\omega \mathrm{d}x=-\int_{\mathbb{R}^{2}}\omega(-\Delta)^{\alpha}\omega \mathrm{d}x$. A direct calculation yields
        \begin{align}\label{eq:en}
            \frac{\mathrm{d}}{\mathrm{d}t}E(t)=-2|(-\Delta)^{\alpha/2}\omega|_{2}^{2}  \quad\Longleftrightarrow\quad \frac{\mathrm{d}}{\mathrm{d}t}|\widehat{\omega}|^{2}_{2}+2\int_{\mathbb{R}^{2}}|\xi|^{2\alpha}|\widehat{\omega}|^{2}\mathrm{d}\xi=0.
        \end{align}
        The function $t\mapsto |\omega(t)|_{2}$ is non-increasing. Take the time-dependent radius $R(t)=\kappa(1+t)^{-\frac{1}{2\alpha}}, \kappa>0$, and split the frequency space into $B(t)=\{|\xi|\leq R(t)\}$, $B(t)^{c}=\{|\xi|>R(t)\}$. Then 
        \begin{align*}
            \int_{\mathbb{R}^{2}}|\xi|^{2\alpha}|\widehat{\omega}|^{2}\mathrm{d}\xi\geq R(t)^{2\alpha}\int_{|\xi|>R(t)}|\widehat{\omega}|^{2}\mathrm{d}\xi=R^{2\alpha}(t)|\widehat{\omega}|_{2}^{2}-R^{2\alpha}(t)\int_{|\xi|\leq R(t)}|\widehat{\omega}|^{2}\mathrm{d}\xi.
        \end{align*}
        By Fourier transform, we obtain $\widehat{\omega}=\widehat{\partial _{1}u_{2}}-\widehat{\partial_{2}u_{1}} =i\xi^{\perp}\cdot\widehat{\boldsymbol{u}}(\xi)$. Thus, we have
        \begin{align*}
            \int_{|\xi|\leq R(t)}|\widehat{\omega}|^{2}\mathrm{d}\xi\leq R^{2}(t)\int_{|\xi|\leq R(t)}|\widehat{\boldsymbol{u}}|^{2}\mathrm{d}\xi\leq R^{2}(t)|\widehat{\boldsymbol{u}}|_{2}^{2}=R^{2}(t)|\boldsymbol{u}(t)|_{2}^{2}.
        \end{align*}
        Plugging these bounds into \eqref{eq:en} gives
        \begin{align}\label{eq:in-en}
            \frac{\mathrm{d}}{\mathrm{d}t}E(t)\leq -2R^{2\alpha}(t)E(t)+2R^{2\alpha+2}(t)|\boldsymbol{u}(t)|_{2}^{2}.
        \end{align}
        Define $F(t):=(1+t)^{\frac{1}{\alpha}}E(t)$. Then $F'(t)=\frac{1}{\alpha}(1+t)^{\frac{1}{\alpha}-1}E(t)+(1+t)^{\frac{1}{\alpha}}E'(t)$. Using \eqref{eq:in-en} we obtain
        \begin{align*}
            F'(t)&\leq \frac{1}{\alpha}(1+t)^{\frac{1}{\alpha}-1}E(t)+(1+t)^{\frac{1}{\alpha}}\big[-2R^{2\alpha}(t)E(t)+2R^{2\alpha+2}(t)|\boldsymbol{u}(t)|_{2}^{2}\big]\\
            &=\bigg(\frac{1}{\alpha}-2R^{2\alpha}(t)(1+t)\bigg)\frac{F(t)}{1+t}+2(1+t)^{\frac{1}{\alpha}}R^{2\alpha+2}(t)|\boldsymbol{u}(t)|_{2}^{2}.
        \end{align*}
        Since $R(t)=\kappa(1+t)^{-\frac{1}{2\alpha}}$, we have $R^{2\alpha}(t)(1+t)=\kappa^{2\alpha}$. Hence
        \begin{align}\label{eq:in-en1}
            F'(t)\leq -\bigg(2\kappa^{2\alpha}-\frac{1}{\alpha}\bigg)\frac{F(t)}{1+t}+C\frac{\kappa^{2\alpha+2}}{1+t}|\boldsymbol{u}(t)|_{2}^{2},
        \end{align}
        where $C=2$. Choose $\kappa$ large so that $a:=2\kappa^{2\alpha}-\frac{1}{\alpha}>0$. Then \eqref{eq:in-en1} becomes
        \begin{align}\label{eq:in-en-f}
            F'(t)+a\frac{F(t)}{1+t}\leq \frac{\varepsilon(t)}{1+t},\quad \varepsilon(t)=C\kappa^{2\alpha+2}|\boldsymbol{u}(t)|_{2}^{2}.
        \end{align}
        It is established in \cite{ZL 24} that $|\boldsymbol{u}(t)|_{2} \to 0$ as $t \to \infty$, hence $\varepsilon(t) \to 0$. Applying standard ODE comparison arguments to the differential inequality for $F(t)$ with $a>0$, we obtain $\lim_{t\to\infty} F(t) = 0$. 
        Hence 
        \begin{align*}
            F(t)=(1+t)^{\frac{1}{\alpha}}|\omega(t)|_{2}^{2}\to 0,\  \text{equivalently}\quad t^{\frac{1}{2\alpha}}|\omega(t)|_{2}\to 0.
        \end{align*}
        This proves the case $p=2$. 

        For any $p\geq 2$, from \eqref{eq:ODE}, define $y_p(t):=|\omega(t)|_p^p$. Then
        \begin{equation}\label{diff-ineq}
            y_p'(t)\le -K_{\alpha,p}\,y_p^{1+\alpha}(t)\,|\omega(t)|_{p/2}^{-\alpha p}, \qquad K_{\alpha,p}>0.
        \end{equation}
        Assume the induction hypothesis
        \begin{align*}
            t^{\frac1\alpha(1-\frac2p)}|\omega(t)|_{p/2}\to 0, \qquad t\to\infty.
        \end{align*}
        Let
        \begin{align*}
            M_p(t):=\Bigl(t^{\frac1\alpha(1-\frac2p)}|\omega(t)|_{p/2}\Bigr)^{-\alpha p}\to\infty,
        \end{align*}
        so that $|\omega(t)|_{p/2}^{-\alpha p}=t^{p-2}M_p(t)$. Hence \eqref{diff-ineq} yields
        \begin{align*}
            (y_p^{-\alpha})'(t)=-\alpha y_p^{-\alpha-1}(t)y_p'(t) \ge \alpha K_{\alpha,p}\,t^{p-2}M_p(t).
        \end{align*}
        Fix $L>1$ and choose $T_L$ such that $M_p(t)\ge L$ for all $t\ge T_L$. Integrating over $(T_L,t)$,
        \begin{align*}
            y_p^{-\alpha}(t)\ge \alpha K_{\alpha,p}L\int_{T_L}^t s^{p-2}\mathrm{d}s=\frac{\alpha K_{\alpha,p}}{p-1}L\,(t^{p-1}-T_L^{p-1}).
        \end{align*}
        Therefore,
        \begin{align*}
            t^{\frac{p-1}{\alpha}}y_p(t) \le C_{\alpha,p}\,L^{-1/\alpha}\Bigl(1-(T_L/t)^{p-1}\Bigr)^{-1/\alpha},\qquad C_{\alpha,p}:=\Bigl(\frac{\alpha K_{\alpha,p}}{p-1}\Bigr)^{-1/\alpha}.
        \end{align*}
        Taking $\limsup_{t\to\infty}$ and then $L\to\infty$ gives $t^{\frac{p-1}{\alpha}}y_p(t)\to 0$, i.e.
        \begin{align*}
            t^{\frac1\alpha(1-\frac1p)}|\omega(t)|_p\to 0, \qquad t\to\infty.
        \end{align*}
        In particular, taking $p=2^n\to\infty$ and using $|\omega|_p\to |\omega|_\infty$, we obtain $t^{1/\alpha}|\omega(t)|_\infty\to 0$. Finally, \eqref{w-d} follows by interpolation, and \eqref{u-d} follows from Lemma \ref{sol-e}; moreover $|\mathbf{u}(t)|_2\to 0$ is proved in \cite{ZL 24}.       
    \end{proof}

    \section{Scaling variables and invariant manifolds}
    
    Our analysis of the long-time asymptotics of \eqref{ve} depend on rewriting the equation in terms of scaling variables:
    \begin{align*}
        \xi=\frac{x}{(1+t)^{\frac{1}{2\alpha}}},\quad \tau=\text{log}(1+t).
    \end{align*}
    If $\omega(x,t)$ is the solution of \eqref{ve} and $\boldsymbol{u}(t)$ is the corresponding velocity field, we define new function $w(\xi,\tau)$ and $\textbf{v}(\xi,\tau)$ by  
    \begin{align}
        \omega(x,t)=\frac{1}{(1+t)}w\bigg(\frac{x}{(1+t)^{1/2\alpha}},\text{log}(1+t)\bigg),\label{w-omega}\\
        \textbf{u}(x,t)=\frac{1}{(1+t)^{1-1/2\alpha}}\textbf{v}\bigg(\frac{x}{(1+t)^{1/2\alpha}},\text{log}(1+t)\bigg)\label{v-u}.
    \end{align}
    Then $w(\xi,\tau)$ satisfies 
    \begin{align}\label{nve}
        \partial_{\tau}w=\mathcal{L}_{\alpha}w-(\textbf{v}\cdot \nabla_{\xi})w,
    \end{align}
    where 
    \begin{align}\label{linear}
        \mathcal{L}_{\alpha}w=-(-\Delta)^{\alpha}w+\frac{1}{2\alpha}(\boldsymbol{\xi}\cdot \nabla_{\xi})w+w,
    \end{align}
    and 
    \begin{align}\label{nbs}
        \textbf{v}(\xi,\tau)=\frac{1}{2\pi}\int_{\mathbb{R}^{2}}\frac{(\boldsymbol{\xi}-\boldsymbol{\eta})^{\perp}}{|\xi-\eta|^{2}}w(\eta,\tau)\mathrm{d}\eta.
    \end{align}
    For the detailed derivation of the equation \eqref{nve}, refer to Appendix A.

    The bounds obtained in the previous section yield quantitative information for solutions to \eqref{nve}; for any $w_0\in L^1(\mathbb{R}^2)$, there exists a unique global solution $w\in C^0([0,\infty);L^1(\mathbb{R}^2))\cap C^0((0,\infty);L^\infty(\mathbb{R}^2))$ with $w(0)=w_0$. 
    Translating \eqref{ww0} and \eqref{u-sol-e} into rescaled variables, we obtain that, for all $\tau>0$
    \begin{equation}\label{eq:Lp-w}
        |w(\tau)|_{p}\leq \frac{C_{p,\alpha}|w_{0}|_{1}}{a(\tau)^{\frac{1}{\alpha}(1-\frac{1}{p})}}e^{(1-\frac{1}{\alpha})\tau},\qquad 1\leq p\leq \infty,
    \end{equation}
    and
    \begin{equation}\label{eq:Lq-v}
        |\textbf{v}(\tau)|_{q}\leq \frac{C_{q,\alpha}|w_{0}|_{1}}{a(\tau)^{\frac{1}{\alpha}(\frac{1}{2}-\frac{1}{q})}}e^{(1-\frac{1}{\alpha})\tau},\qquad 2<q\leq \infty,
    \end{equation}
    where $a(\tau)=1-e^{-\tau}$ and the constants $C_{p,\alpha},C_{q,\alpha}>0$ are independent of $\tau$ and of $w_{0}$. It is worth noting that, unlike $\alpha=1$, the total mass of the rescaled vorticity $w(\tau)$ is not conserved but decays exponentially due to the factor $e^{(1-1/\alpha)\tau}$. In addition, if $\textbf{v}_{0}\in L^{2}(\mathbb{R}^{2})^{2}$, then $\displaystyle\int_{\mathbb{R}^{2}} w(\xi,\tau)\mathrm{d}\xi=0$ for every $\tau\ge0$. Using this and estimate \eqref{w-d}, one concludes that $|w(\tau)|_{p}\to0$ as $\tau\to\infty$ for all $p\in(1,\infty]$.

    In this paper, we will use weighted $L^{2}$ space to obtain better the long-time asymptotics of this solution. For any $m\geq 0$, we define the Hilbert space $L^{2}(m\alpha)$ by
    \begin{gather*}
        L^{2}(m\alpha)=\{f\in L^{2}(\mathbb{R}^{2})\ |\  \|f\|_{m\alpha}<\infty\},\\
        \|f\|_{m\alpha}=\bigg(\int_{\mathbb{R}^{2}}(1+|\xi|^{2})^{m\alpha}|f(\xi)|^{2}\mathrm{d}\xi\bigg)^{\frac{1}{2}}=|b^{m\alpha}f|_{2},
    \end{gather*}
    where $b(\xi)=(1+|\xi|^{2})^{1/2}$. If $m\alpha>1$, then $L^{2}(m\alpha)\hookrightarrow L^{1}(\mathbb{R}^{2})$ (see Proposition \ref{prop:L2mq_to_Lq}). Analogously, the weighted Sobolev spaces are defined by
    \begin{align*}
        H^{1}(m\alpha)=\{w\in L^{2}(m\alpha)\ |\ \partial_{i}f\in L^{2}(m\alpha)\ \text{for}\ i=1,2\}.
    \end{align*}

    The explicit kernel formula \eqref{group-P} in Appendix B defines bounded operators $S^\alpha(\tau)$ on $L^2(m\alpha)$ forming a semigroup and satisfying $\lim_{\tau\to 0}\|S^\alpha(\tau)f-f\|_{m\alpha}=0$; therefore $\mathcal L_\alpha$ generates the $C_0$-semigroup $S^\alpha(\tau)=e^{\tau\mathcal L_\alpha}$. It should be stressed that the semigroup $e^{\tau\mathcal{L}_{\alpha}}$ fails to commute with space derivatives. Indeed, due to $\partial_{x_{i}}\mathcal{L}_{\alpha}=(\mathcal{L}_{\alpha}+\frac{1}{2\alpha})\partial_{x_{i}}$ for $i=1,2$, we get $\partial_{x_{i}}e^{\tau\mathcal{L}_{\alpha}}=e^{\frac{\tau}{2\alpha}}e^{\tau\mathcal{L}_{\alpha}}\partial_{x_{i}}$ for all $\tau\geq 0$. Hence, according to this and the fact $\nabla\cdot\textbf{v}=0$, we can rewrite equation \eqref{nve} in integral form:
    \begin{align}\label{sol-inte}
        w(\tau)=e^{\tau\mathcal{L}_{\alpha}}w_{0}-\int_{0}^{\tau}e^{-\frac{1}{2\alpha}(\tau-s)}\nabla\cdot e^{(\tau-s)\mathcal{L}_{\alpha}}(\textbf{v}(s)w(s))\mathrm{d}s,
    \end{align}
    where $w_{0}=w(0)\in L^{2}(m\alpha)$. The following lemma shows that the quadratic nonlinear term in \eqref{sol-inte} is bounded (hence smooth) in $L^{2}(m\alpha)$ if $2\leq m<4$ and $\alpha>1/2$.
    \begin{lemma}\label{nonlinear-es}
        Fix $2\leq m<4$, $\alpha>1/2$ and $T>0$. Given $w_{1}$, $w_{2}\in C^{0}([0,T],L^{2}(m\alpha))$, define 
        \begin{align*}
            R(\tau)=\int_{0}^{\tau}\nabla\cdot e^{(\tau-s)\mathcal{L}_{\alpha}}(\mathbf{v}_{1}(s)w_{2}(s))\mathrm{d}s,\qquad 0\leq \tau\leq T,
        \end{align*}
        where $\mathbf{v}_{1}$ is obtained from $w_{1}$ via the Biot-Savart law \eqref{nbs}. Then $R\in C^{0}([0,T],L^{2}(m\alpha))$, and there exists $C_{0}=C_{0}(m,T,\alpha)>0$ such that 
        \begin{align*}
            \sup\limits_{0\leq \tau\leq T}\|R(\tau)\|_{m\alpha}\leq C_{0}\bigg(\sup\limits_{0\leq \tau\leq T}\|w_{1}(\tau)\|_{m\alpha}\bigg)\bigg(\sup\limits_{0\leq \tau\leq T}\|w_{2}(\tau)\|_{m\alpha}\bigg).
        \end{align*}
        Moreover, $C_{0}(m,T,\alpha)\to 0$ as $T\to 0$.
    \end{lemma}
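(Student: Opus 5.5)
The argument follows the classical one (\cite[Lemma 3.1]{TC 02}): a smoothing estimate for $\nabla\cdot e^{\tau\mathcal L_\alpha}$ on weighted spaces, combined with a product estimate for $\mathbf v_1 w_2$.

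\emph{Step 1: smoothing estimate.} We aim for a bound of the form
\begin{align*}
\|\nabla\cdot e^{\tau\mathcal L_\alpha} F\|_{m\alpha}\le \frac{C}{a(\tau)^{\frac{1}{2\alpha}+\frac{1}{\alpha}(\frac1q-\frac12)}}\,\|b^{m\alpha}F\|_{q}, \qquad 1\le q\le 2,
\end{align*}
valid for $0<\tau\le T$, where $a(\tau)=1-e^{-\tau}$. We take $q$ slightly below $2$, say $q\in(1,2)$ close to $2$. The derivation uses the explicit kernel \eqref{group-P} from Appendix B, which writes $e^{\tau\mathcal L_\alpha}F$ as a rescaled convolution of $F$ with the $2\alpha$-stable kernel $P_\alpha$ at time $a(\tau)$. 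The derivative then falls on $\nabla P_\alpha$.
\begin{itemize}
\item $\nabla P_\alpha$ satisfies $|\nabla P_\alpha(x)|\lesssim (1+|x|)^{-3-2\alpha}$.
\item The weight is handled by $b(\xi)^{m\alpha}\lesssim b(\xi-\eta)^{m\alpha}+b(\eta)^{m\alpha}$.
\item Young's inequality is applied to each of the two resulting terms.
\end{itemize}
The term where the weight falls on the kernel requires $b^{m\alpha}\nabla P_\alpha\in L^r$ for the Young exponent $r$, with $1+\frac12=\frac1r+\frac1q$. This is where the restriction $m<4$ appears: $m\alpha<2+2\alpha$ is needed against the polynomial kernel tail, in contrast with the Gaussian case. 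For $q$ close to $2$ the exponent $r$ is close to $1$, so this condition reduces to roughly $m\alpha<2\alpha+2$, which holds for $m<4$. The time singularity exponent is $\frac{1}{2\alpha}+\frac1\alpha(\frac1q-\frac12)$, which is $<1$ when $q$ is near $2$ and $\alpha>1/2$. This is the reason $\alpha>1/2$ is required.

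\emph{Step 2: product estimate.} By H\"older,
\begin{align*}
\|b^{m\alpha}\mathbf v_1 w_2\|_{q}\le |\mathbf v_1|_{r'}\,\|w_2\|_{m\alpha},\qquad \tfrac1q=\tfrac12+\tfrac1{r'},
\end{align*}
where $r'$ is large because $q$ is near $2$. By Lemma \ref{sol-e}(a), $|\mathbf v_1|_{r'}\le C|w_1|_{p}$ with $\frac1{r'}=\frac1p-\frac12$. Here $p\in(1,2)$, and $|w_1|_p\le C\|w_1\|_{m\alpha}$ because $m\alpha\ge 2\alpha>1$ and $L^2(m\alpha)\hookrightarrow L^p$ for $1\le p\le2$ (Proposition \ref{prop:L2mq_to_Lq}). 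The lower bound $m\ge2$ is used exactly here, to get the embedding into $L^1$.

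\emph{Step 3: conclusion.} Integrating the Step 1 bound in time gives
\begin{align*}
\|R(\tau)\|_{m\alpha}\le C\int_0^\tau a(\tau-s)^{-\gamma}\,\mathrm ds\ \sup\|w_1\|_{m\alpha}\sup\|w_2\|_{m\alpha},
\end{align*}
with $\gamma<1$. The integral is finite and bounded by $C T^{1-\gamma}$, which gives $C_0\to0$ as $T\to0$.

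Continuity of $R$ in $\tau$ follows by the standard argument:
\begin{itemize}
\item split $R(\tau+h)-R(\tau)$ into the integral over $[\tau,\tau+h]$, which is small by the same estimate;
\item and $(e^{h\mathcal L_\alpha}-1)$ applied to the rest, which tends to zero by strong continuity of the semigroup together with dominated convergence.
\end{itemize}

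\emph{Main obstacle.} The hardest part is Step 1, namely the weighted $L^q\to L^2(m\alpha)$ bound for $\nabla P_\alpha$ given only algebraic kernel decay. The order of work will be:
\begin{enumerate}
\item verify the decay $|\nabla P_\alpha(x)|\lesssim|x|^{-3-2\alpha}$ from the Fourier representation;
\item check integrability of $b^{m\alpha}|\nabla P_\alpha|^r$ near $r=1$;
\item if that integrability fails near the endpoint, use the splitting $b(\xi)^{m\alpha}\lesssim a(\tau)^{m/2}b(\xi-\eta)^{m\alpha}+b(\eta)^{m\alpha}$ adapted to the rescaled kernel.
\end{enumerate}
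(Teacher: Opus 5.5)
\textbf{There is a genuine gap in your Step 1.} The integrability threshold for the weighted kernel is miscomputed, and as a result the choice of $q$ near $2$ fails on part of the range $2\le m<4$.

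In $\mathbb{R}^2$ one has $|\nabla P_\alpha(x)|\sim|x|^{-3-2\alpha}$, so $b^{m\alpha}\nabla P_\alpha\in L^r$ iff $r(3+2\alpha-m\alpha)>2$. At $r=1$, which is your regime $q\to2$, this reads $m\alpha<2\alpha+1$. The threshold $m\alpha<2\alpha+2$ that you quote is the one for $r=2$, i.e.\ $q=1$. Equivalently, Proposition~\ref{group-est} with $N=p=2$, $|\beta|=1$ demands $(m-2)\alpha q<2$.

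So for every $\alpha>\frac12$ your argument only covers $2\le m<2+\frac1\alpha$. This is a proper subrange of $[2,4)$, and it misses exactly the case $m=4-\varepsilon$ needed in Theorem~\ref{opt}. Your fallback does not fix this. In the rescaled variable the weight on the kernel becomes $a(\tau)^{m/2}|y|^{m\alpha}$, which changes only the power of $a(\tau)$. Meanwhile $|y|^{m\alpha}\nabla P_\alpha(y)$ still has a non-integrable tail in $L^r$ for $r$ near $1$.

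\textbf{The missing idea.} As in the paper's proof, $q$ must be taken in a window rather than near $2$. The three constraints are:
\begin{itemize}
\item $q>\frac{2}{m\alpha+1}$, for the embedding $L^2(m\alpha)\hookrightarrow L^q$;
\item $(m-2)\alpha q<2$, for the kernel tail;
\item $\alpha q>1$, so that the singularity $a(\tau)^{-1/(\alpha q)}$ is integrable. Note that your exponent $\frac1{2\alpha}+\frac1\alpha(\frac1q-\frac12)$ equals $\frac1{\alpha q}$.
\end{itemize}
Taking $\alpha q$ slightly larger than $1$ satisfies all three, because $\alpha>\frac12$ and $m-2<2$. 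This compatibility is where $m<4$ really enters.

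\textbf{An alternative repair.} You can also complete your splitting idea by changing the Young pairing for the weighted-kernel term only. Split with $b(\xi)\le b(\eta)+|\xi-\eta|$, and bound the kernel piece by $\bigl\||x|^{m\alpha}\nabla\Phi_a\bigr\|_{2}\,|F|_1$, up to bounded exponential factors. Here $\Phi_a$ is the kernel rescaled to time $a(\tau)$. This works because:
\begin{itemize}
\item it needs only $m\alpha<2+2\alpha$, which holds for $m<4$ and $\alpha<1$;
\item $|F|_1\le C|b^{m\alpha}F|_q$ by H\"older, since $m\alpha>1$;
\item the time factor is $a(\tau)^{m/2-1/\alpha}$, which is integrable because $m\ge2$ and $\alpha>\frac12$.
\end{itemize}
The other piece, with the weight on the data, can keep $q$ near $2$.

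With either repair, your Steps 2 and 3 and the continuity argument go through as written.
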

    \begin{proof}
        Choose $q\in (1,2)$ such that $q>\frac{2}{m\alpha+1}$. Using proposition \eqref{group-est}, for $|\beta|=1$, $p=N=2$ and $q<\frac{2}{m\alpha-2\alpha}$, we have  
        \begin{align*}
            |b^{m\alpha}\nabla\cdot S^{\alpha}(\tau)f|_{2}\leq C a(\tau)^{-\frac{1}{\alpha q}}|b^{m\alpha}f|_{q},\qquad 0\leq \tau\leq T.
        \end{align*}
        Moreover
        \begin{align*}
            \|R(\tau)\|_{m\alpha}\leq C\int_{0}^{\tau}a(\tau-s)^{-\frac{1}{\alpha q}}|b^{m\alpha}\textbf{v}_{1}(s)w_{2}(s)|_{q}\mathrm{d}s,
        \end{align*}
        where $a(\tau)=1-e^{-\tau}$ and $b(\xi)=(1+|\xi|^{2})^{\frac{1}{2}}$. Moreover, using H\"older's inequality and Lemma \ref{sol-e}, we have 
        \begin{align*}
            |b^{m\alpha}\textbf{v}_{1}w_{2}|_{q}\leq |b^{m\alpha}w_{2}|_{2}|\textbf{v}_{1}|_{\frac{2q}{2-q}}\leq \|w_{2}\|_{m\alpha}\|w_{1}\|_{m\alpha},
        \end{align*}
        since $L^{2}(m\alpha)\hookrightarrow L^{q}(\mathbb{R}^{2})$. Therefore, for all $\tau\in [0,T]$, we find
        \begin{align*}
            \|R(\tau)\|_{m\alpha}\leq C\int_{0}^{\tau}a(\tau-s)^{-\frac{1}{\alpha q}}\mathrm{d}s\bigg(\sup\limits_{0\leq \tau\leq T}\|w_{1}(\tau)\|_{m\alpha}\bigg)\bigg(\sup\limits_{0\leq \tau\leq T}\|w_{2}(\tau)\|_{m\alpha}\bigg),
        \end{align*}
        where $a(s)=1-e^{-s}$. Since $a(s)\sim s$ as $s\to 0$, the time integral is finite if and only if $\frac{1}{\alpha q}<1$, or equivalently $\alpha>1/2$.
    \end{proof}

We now establish that \eqref{nve} admits a global solution in $L^{2}(m\alpha)$ under the conditions $2 \leq m < 4$ and $\alpha > 1/2$.

    \begin{theorem}\label{global}
        Suppose that $w_{0}\in L^{2}(m\alpha)$ for some $2\leq m<4$ and $\alpha>1/2$. Then \eqref{nve} has a unique global solution $w\in C^{0}([0,T],L^{2}(m\alpha))$ with $w(0)=w_{0}$, and there exists $C_{1}=C_{1}(\|w_{0}\|_{m\alpha})>0$ such that
        \begin{align}\label{sol-bounded}
            \|w(\tau)\|_{m\alpha}\leq C_{1},\qquad \tau\geq 0.
        \end{align}
        Moreover, $C_{1}(\|w_{0}\|)\to 0$ as $\|w_{0}\|_{m\alpha}\to 0$. Finally, if $w_{0}\in L^{2}_{0}(m\alpha)$, then $\int_{\mathbb{R}^{2}}w(\xi,\tau)\mathrm{d}\xi=0$ for all $\tau\geq 0$, and $\lim\limits_{\tau\to\infty}\|w(\tau)\|_{m\alpha}=0$.
    \end{theorem}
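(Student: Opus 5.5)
Local existence comes first, by a contraction argument on the integral formulation \eqref{sol-inte}. On the space $X_T=C^0([0,T],L^2(m\alpha))$ define
\begin{align*}
(\Phi w)(\tau)=e^{\tau\mathcal{L}_{\alpha}}w_{0}-\int_{0}^{\tau}e^{-\frac{1}{2\alpha}(\tau-s)}\nabla\cdot e^{(\tau-s)\mathcal{L}_{\alpha}}(\mathbf{v}(s)w(s))\,\mathrm{d}s .
\end{align*}
Since $e^{\tau\mathcal L_\alpha}$ is a $C_0$-semigroup on $L^2(m\alpha)$, the linear term is bounded by $M\|w_0\|_{m\alpha}$ on $[0,1]$. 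The factor $e^{-\frac{1}{2\alpha}(\tau-s)}\le 1$ does not affect the argument of Lemma \ref{nonlinear-es}, so that lemma gives
\begin{align*}
\|\Phi w_1-\Phi w_2\|_{X_T}\le C_0(T)\big(\|w_1\|_{X_T}+\|w_2\|_{X_T}\big)\|w_1-w_2\|_{X_T}.
\end{align*}
Because $C_0(T)\to0$, $\Phi$ is a contraction on a ball of radius $2M\|w_0\|_{m\alpha}$ for $T=T(\|w_0\|_{m\alpha})$ small. This yields local existence and uniqueness, and continuation up to a maximal time $T^*$, with $\|w(\tau)\|_{m\alpha}\to\infty$ if $T^*<\infty$. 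Globality therefore reduces to an a priori bound on $\|w(\tau)\|_{m\alpha}$.

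For the a priori bound, note that $L^2(m\alpha)\hookrightarrow L^1$ because $m\alpha>1$. The solution therefore coincides with the $L^1$ solution of Theorem \ref{gwp}, and \eqref{eq:Lp-w}, \eqref{eq:Lq-v} give, for $\tau\ge1$, uniform bounds on $|w|_p$ and $|\mathbf v|_q$ (even exponentially decaying, since $e^{(1-1/\alpha)\tau}$ decays), all in terms of $|w_0|_1\le C\|w_0\|_{m\alpha}$. I would then run a weighted energy estimate. Multiply \eqref{nve} by $b^{2m\alpha}w$ and integrate.
\begin{itemize}
\item The dilation term $\frac1{2\alpha}\xi\cdot\nabla w+w$ produces $(1-\frac1{\alpha})\|w\|^2_{m\alpha}$ plus a term involving $\frac{m}{2}\int |\xi|^2b^{2m\alpha-2}w^2$. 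This term has a bad sign for large weights, and I expect it to be handled as in the classical case by splitting $|\xi|$ large and small.
\item The transport term reduces to $\int \mathbf v\cdot\nabla(b^{2m\alpha})\,w^2/2$, bounded by $C|\mathbf v|_\infty\|w\|^2_{m\alpha}$.
\item The fractional term $\int b^{2m\alpha}w(-\Delta)^\alpha w$ is the main obstacle. I would write it as $\int |(-\Delta)^{\alpha/2}(b^{m\alpha}w)|^2$ plus a commutator $[(-\Delta)^{\alpha/2},b^{m\alpha}]$. The commutator is controlled by $\|w\|_{(m-1)\alpha}$-type lower-weight norms, using the Stroock--Varopoulos type estimates of Appendix C. This is also where the restriction $m<4$ should enter, since it keeps the weight compatible with the $|\xi|^{-2-2\alpha}$ tails of the kernel.
\end{itemize}
Combining these, I aim for an inequality
\begin{align*}
\frac{d}{d\tau}\|w\|^2_{m\alpha}\le -\delta\|w\|^2_{m\alpha}+C\big(|w|_2^2+|\mathbf v|_\infty\|w\|^2_{m\alpha}\big),
\end{align*}
with $\delta>0$ coming from the spectral gap: the essential spectrum of $\mathcal L_\alpha$ on $L^2(m\alpha)$ lies to the left of $1-\frac{1}{\alpha}-\frac{m}{2}+\dots<0$. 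If this differential estimate is too delicate, the alternative is to use the Duhamel formula with the semigroup bounds of Appendix B on $[\tau_0,\infty)$, where $|\mathbf v|_\infty$ is small. For $\tau\ge\tau_0$ the term $|\mathbf v|_\infty$ is small, and integrable in $\tau$ because of the exponential factor, so Gronwall gives $\|w(\tau)\|_{m\alpha}\le C_1$ with $C_1\to0$ as $\|w_0\|_{m\alpha}\to0$. On $[0,\tau_0]$ the local theory applies.

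Conservation of zero mean follows by integrating \eqref{nve}. The transport term is a divergence, and $\int\mathcal L_\alpha w=(1-\frac1\alpha)\int w$, so $\int w(\tau)=e^{(1-1/\alpha)\tau}\int w_0=0$. For the decay when $w_0\in L^2_0(m\alpha)$, the lower-order term $|w|_2$ tends to $0$ by Theorem \ref{th:3.4} (through \eqref{w-d}), and $|\mathbf v|_\infty\to0$ as well. The differential inequality then forces $\limsup_{\tau\to\infty}\|w(\tau)\|_{m\alpha}=0$.
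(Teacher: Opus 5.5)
Your architecture matches the paper's: contraction via Lemma~\ref{nonlinear-es}, a weighted $L^2$ energy estimate fed by the $L^1$ bounds \eqref{eq:Lp-w}--\eqref{eq:Lq-v}, Gronwall, and zero mean by integrating \eqref{nve}. The gap is in the step you yourself flag as the main obstacle, and your plan for it is off in two ways.

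First, the drift. One has $\int b^{2m\alpha}w\bigl(\frac1{2\alpha}\xi\cdot\nabla w+w\bigr)\mathrm{d}\xi=(1-\frac1{2\alpha})\|w\|_{m\alpha}^2-\frac m2\int|\xi|^2b^{2m\alpha-2}w^2\,\mathrm{d}\xi$. So the $\xi$-weighted term has the good sign, and it is the only source of damping, at rate $\delta=\frac m2+\frac1{2\alpha}-1$ up to lower-order terms. That is why it matches the edge of the continuous spectrum in Theorem~\ref{sp}; a spectral gap is not something an energy identity can use.

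Second, the fractional term. One has $\int b^{2m\alpha}w(-\Delta)^\alpha w\,\mathrm{d}\xi=|(-\Delta)^{\alpha/2}(b^{m\alpha}w)|_2^2-E$ with
\begin{align*}
E=\frac{C(2,\alpha)}{2}\iint_{\mathbb{R}^2\times\mathbb{R}^2}\frac{\bigl(b^{m\alpha}(\xi)-b^{m\alpha}(\eta)\bigr)^2w(\xi)w(\eta)}{|\xi-\eta|^{2+2\alpha}}\,\mathrm{d}\xi\,\mathrm{d}\eta ,
\end{align*}
and you need $|E|\le\varepsilon\|w\|_{m\alpha}^2+C_\varepsilon(\text{lower order})$.
\begin{itemize}
\item The near-diagonal part $|\xi-\eta|\le1$ is harmless.
\item The far part is not automatically lower order. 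The crude bound $|b^{m\alpha}(\xi)-b^{m\alpha}(\eta)|\le b^{m\alpha}(\xi)+b^{m\alpha}(\eta)$ (the one used in Appendix C) only yields $C\|w\|_{m\alpha}^2$, with a constant that need not be below $\delta$. The paper's own proof leaves exactly this unchecked, through the constant $C_*$ in \eqref{f-esti}.
\item What works is to split the far region into $|\eta|\le|\xi|/2$, $|\xi|\le|\eta|/2$ and $|\xi|\sim|\eta|$. On the last piece the mean value theorem gains $b^{-2\alpha}$ and gives $\int b^{2m\alpha-2\alpha}w^2$. On the first two, the contribution is $\lesssim|w|_1\int b^{2m\alpha-2-2\alpha}|w|\,\mathrm{d}\xi$, which is $\lesssim|w|_1\|w\|_{m\alpha}$ if and only if $m\alpha<1+2\alpha$.
\end{itemize}

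That condition, $m<2+\frac1\alpha$, is the real restriction. It is strictly stronger than $m<4$ for every $\alpha\in(\frac12,1)$; the paper's $m<4$ is only what Lemma~\ref{nonlinear-es} needs, and that lemma only sees the derivative kernel $\nabla\phi_0\sim|\xi|^{-3-2\alpha}$. The condition is also sharp:
\begin{itemize}
\item Since $\phi_0(y)\sim c_0|y|^{-2-2\alpha}$ with $c_0>0$, \eqref{group-P} shows that for $w_0\in C_c^\infty$ with $\int w_0\neq0$ the function $e^{\tau\mathcal L_\alpha}w_0$ has the tail $c_0\,a(\tau)e^{(1-\frac1\alpha)\tau}\bigl(\int w_0\bigr)|\xi|^{-2-2\alpha}$.
\item This tail lies in $L^2(m\alpha)$ iff $m\alpha<1+2\alpha$; this is the computation done for $G_\alpha$ in the proof of Theorem~\ref{sp}.
\item The Duhamel term is a divergence, decays one power faster, and cannot cancel this tail.
\end{itemize}
So your premise that $e^{\tau\mathcal L_\alpha}$ is a $C_0$-semigroup on $L^2(m\alpha)$ fails for $2+\frac1\alpha\le m<4$. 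Note that Proposition~\ref{group-est} with $p=q$, $\beta=0$ only provides it for $m<2$. In that range no argument can give the statement for data of nonzero mass.

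For $2\le m<2+\frac1\alpha$ your scheme does close once the drift sign and the three-region bound are in place. $|\mathbf v|_\infty$ is integrable on $(0,\infty)$ by \eqref{eq:Lq-v}, and for $\tau\ge1$ one has $|w|_1+|w|_2\le C|w_0|_1e^{(1-\frac1\alpha)\tau}$ by \eqref{eq:Lp-w}. Gronwall then yields $C_1(\|w_0\|_{m\alpha})$, and since $\alpha<1$, even $\|w(\tau)\|_{m\alpha}\to0$ without the zero-mean assumption.
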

    \begin{proof}
    Using Lemma \ref{nonlinear-es} and a fixed-point argument. Choose Banach space $X_{T}=C([0,T],L^{2}(m\alpha))$ with norm $\|w\|_{X_{T}}=\sup\limits_{0\leq \tau\leq T} \|w(\tau)\|_{m\alpha}$. Define the mapping $\Psi: X_{T}\mapsto X_{T}$
    \begin{align*}
        \Psi[w](\tau):=S^{\alpha}(\tau)w_{0}-\int_{0}^{\tau}e^{-(\tau-s)/2\alpha}\nabla\cdot S^{\alpha}(\tau-s)(\textbf{v}(s)w(s))\mathrm{d}s. 
    \end{align*}
    To prove that $\Psi$ is a contraction mapping, we estimate the nonlinear term. Let $q \in (1, 2)$ be fixed. According to Proposition \ref{group-est}, we have
    \begin{align*}
        \|e^{-(\tau-s)/2\alpha}\nabla\cdot S^{\alpha}(\tau-s)f\|_{m\alpha} \leq C(\tau-s)^{-\gamma} |b^{m\alpha}f|q,
    \end{align*} 
    where $\gamma = \frac{1}{2\alpha}(1 + 2(\frac{1}{q}-\frac{1}{2})) < 1$, since $\alpha > 1/2$, we can choose $q$ close to 2 such that $\gamma < 1$. Now, for $w_{1}, w_{2} \in B_{k}$, consider the difference $\textbf{v}_{1}w_{1}-\textbf{v}_{2}w_{2} = (\textbf{v}_{1}-\textbf{v}_{2})w_{1} + \textbf{v}_{2}(w_{1}-w_{2})$. Using H\"older's inequality with exponent pair $(2, \frac{2q}{2-q})$ (note that $\frac{2q}{2-q} > 2$), we obtain:
    \begin{align*}
        |b^{m\alpha}(\textbf{v}_{1}-\textbf{v}_{2})w_{1}|_{q} &\leq \|w_{1}\|_{m\alpha} |\textbf{v}_{1}-\textbf{v}_{2}|_{\frac{2q}{2-q}} \\
        &\leq C \|w_{1}\|_{m\alpha} |w_{1}-w_{2}|_{q} \quad \text{(by Biot-Savart law in $L^p$)}\\
        &\leq C \|w_{1}\|_{m\alpha} \|w_{1}-w_{2}\|_{m\alpha}.
    \end{align*}
    Similarly, $|b^{m\alpha}\textbf{v}_{2}(w_{1}-w_{2})|_{q} \leq C \|w_{2}\|_{m\alpha} \|w_{1}-w_{2}\|_{m\alpha}$. Therefore, substituting these estimates into the integral equation:
    \begin{align*}
        \|\Psi[w_{1}](\tau)-\Psi[w_{2}](\tau)\|_{m\alpha} 
        &\leq \int_{0}^{\tau} C(\tau-s)^{-\gamma} |b^{m\alpha}(\textbf{v}_{1}w_{1}-\textbf{v}_{2}w_{2})|_{q} \mathrm{d}s \\
        &\leq C \left(\int_{0}^{\tau} (\tau-s)^{-\gamma} \mathrm{d}s \right) (\|w_{1}\|_{X_T} + \|w_{2}\|_{X_T}) \|w_{1}-w_{2}\|_{X_T} \\
        &\leq C T^{1-\gamma} (\|w_{1}\|_{X_T} + \|w_{2}\|_{X_T}) \|w_{1}-w_{2}\|_{X_T}.
    \end{align*}
    Since $1-\gamma > 0$, we can choose $T$ sufficiently small such that $C T^{1-\gamma} (4\|w_{0}\|_{m\alpha}) < 1$. Thus, $\Psi$ is a contraction mapping on the ball $B_k$. Hence, for any $K>0$, there exists $\tilde{T}=\tilde{T}(K)>0$ such that equation \eqref{nve} has a unique local solution $w\in C^{0}([0,\tilde{T}],L^{2}(m\alpha))$. This solution $w(\tau)$ depends continuously on the initial data $w_{0}$, uniformly in $\tau\in [0,\tilde{T}]$. Moreover, $\tilde{T}$ can be chosen so that $\|w(\tau)\|_{m\alpha}\leq 2\|w_{0}\|_{m\alpha}$ for all $\tau\in [0,\tilde{T}]$. Thus, in order to prove global existence, it is sufficient to show that any solution $w\in C^{0}([0,T],L^{2}(m\alpha))$ of \eqref{sol-inte} satisfies the bound \eqref{sol-bounded} for some $C_{1}>0$. 

    Let $w_{0}\in L^{2}(m\alpha)$, $T>0$, and assume that $w\in C^{0}([0,T],L^{2}(m\alpha))$ is a solution of \eqref{sol-inte}. Without loss of generality, we suppose that $T\geq \tilde{T}\equiv \tilde{T}(\|w_{0}\|_{m\alpha})$. Since $L^{2}(m\alpha)\hookrightarrow L^{p}(\mathbb{R}^{2})$ for all $p\in [1,2]$, there exists $c>0$ such that $|w(\tau)|_{p}\leq C\|w(\tau)\|_{m\alpha}$ for all $\tau\in [0,\tilde{T}]$. By \eqref{eq:Lp-w}, we also have $|w(\tau)|_{p}\leq C_{p}a(\tilde{T})^{-\frac{1}{\alpha}(1-1/p)}|w_{0}|_{1}\leq C\|w_{0}\|_{m\alpha}$ such that $|w(\tau)|_{p}\leq C_{2}$ for all $p\in [1,2]$ and all $\tau\in [0,T]$. Moreover, $C_{2}(\|w_{0}\|_{m\alpha})\to 0$ as $\|w_{0}\|_{m\alpha}\to 0$. To bound $||\xi|^{m\alpha}w(\tau)|_{2}$, we compute
        \begin{align}\label{xi-ex}
            \frac{1}{2}\frac{\mathrm{d}}{\mathrm{d}\tau}&\int_{\mathbb{R}^{2}} |\xi|^{2m\alpha}w(\xi,\tau)^{2}\mathrm{d}\xi=\int_{\mathbb{R}^{2}}|\xi|^{2m\alpha}\bigg\{w(-(-\Delta)^{\alpha}w+\frac{1}{2\alpha}(\xi\cdot\nabla)w+w-(\textbf{v}\cdot\nabla)w)\bigg\}\mathrm{d}\xi.
        \end{align}
        Integrating by parts and using the fact that div $\textbf{v}=0$, we can rewrite
        \begin{gather}
            \int_{\mathbb{R}^{2}}|\xi|^{2m\alpha}(w(-(-\Delta)^{\alpha}w))\mathrm{d}\xi\leq -C_{*}[|\xi|^{m\alpha}w]_{\dot{H}^{\alpha}}^{2}+C_{*}\bigg(\int_{\mathbb{R}^{2}}|\xi|^{2m\alpha}w^{2}\mathrm{d}\xi+\int_{\mathbb{R}^{2}}w^{2}\mathrm{d}\xi\bigg),\label{f-esti}\\
            \frac{1}{2\alpha}\int_{\mathbb{R}^{2}}|\xi|^{2m\alpha}w(\boldsymbol{\xi}\cdot\nabla)w\mathrm{d}\xi=-\frac{m\alpha+1}{2\alpha}\int_{\mathbb{R}^{2}}|\xi|^{2m\alpha}w^{2}\mathrm{d}\xi,\notag\\
            \int_{\mathbb{R}^{2}}|\xi|^{2m\alpha}w(\mathbf{v}\cdot\xi)\nabla w\,\mathrm{d}\xi= \frac{1}{2}\int_{\mathbb{R}^{2}}|\xi|^{2m\alpha}\mathbf{v}\cdot\nabla(w^{2})\,\mathrm{d}\xi= \frac{1}{2}\int_{\mathbb{R}^{2}}|\xi|^{2m\alpha}\nabla\cdot(\mathbf{v}w^{2})\mathrm{d}\xi\notag\\
            = -m\alpha\int_{\mathbb{R}^{2}}|\xi|^{2m\alpha-2}(\boldsymbol{\xi}\cdot\mathbf{v})w^{2}\,\mathrm{d}\xi\notag,
        \end{gather}
        where the estimate \eqref{f-esti} is proved in Appendix C. Inserting these expressions into \eqref{xi-ex}, we get
        \begin{align*}
            \frac{1}{2}\frac{\mathrm{d}}{\mathrm{d}\tau}\int_{\mathbb{R}^{2}}|\xi|^{2m\alpha}w^{2}\mathrm{d}\xi\leq& -C_{*}[|\xi|^{m\alpha}w]_{\dot{H}^{\alpha}}^{2}+\bigg(C_{*}-\frac{1-m\alpha}{\alpha})\int_{\mathbb{R}^{2}}|\xi|^{2m\alpha}w^{2}\mathrm{d}\xi\\
            &+m\alpha\int_{\mathbb{R}^{2}}|\xi|^{2m\alpha-2}(\boldsymbol{\xi}\cdot\textbf{v})w^{2}\mathrm{d}\xi+C_{*}\int_{\mathbb{R}^{2}}w^{2}\mathrm{d}\xi.
        \end{align*}
        Note that for all $\varepsilon >0$, there exists $C_{\varepsilon}>0$ such that
        \begin{align*}
            \bigg|\int_{\mathbb{R}^{2}}|\xi|^{2m\alpha-2}(\boldsymbol{\xi}\cdot\textbf{v})w^{2}\mathrm{d}\xi\bigg|\leq \varepsilon \int_{\mathbb{R}^{2}}|\xi|^{2m\alpha}w^{2}\mathrm{d}\xi+C_{\varepsilon}|\textbf{v}|_{\infty}^{2m\alpha}\int_{\mathbb{R}^{2}}w^{2}\mathrm{d}\xi.
        \end{align*}
        According to \eqref{uiw}, $|\textbf{v}(\tau)|_{\infty}\leq C|w|_{p}^{\beta}|w|_{q}^{1-\beta}$ with $1<p<2<q<\infty$ and $\frac{\beta}{p}+\frac{1-\beta}{q}=\frac{1}{2}$. We also know that $|w(\tau)|_{p}\leq C_{2}$ for all $\tau\in [0,T]$. Choosing $\beta=1-\frac{1}{8m\alpha}$, $q=4+\frac{1}{2m\alpha}$, and using \eqref{eq:Lp-w} to bound $|w(\tau)|_{q}$, we obtain
        \begin{align*}
            |\textbf{v}(\tau)|_{\infty}^{2m\alpha}\leq CC_{2}^{2m\alpha}\bigg(\frac{C_{q}|w_{0}|_{1}}{a(\tau)^{1-(\frac{1}{q})}}\bigg)^{2m\alpha(1-\beta)}\leq C_{3}(1+\tau^{-\frac{1}{4\alpha}}),\qquad 0<\tau\leq T.
        \end{align*}
        where $C_{3}=C_{3}(m,\alpha,\|w_{0}\|_{m\alpha})$. Thus, there exists $C_{4}=C_{4}(m,\alpha,\|w_{0}\|_{m\alpha})$ such that
        \begin{align}\label{gronwall}
            \frac{\mathrm{d}}{\mathrm{d}\tau}\int_{\mathbb{R}^{2}}|\xi|^{2m\alpha}w^{2}\mathrm{d}\xi\leq -C_{4} \bigg(\int_{\mathbb{R}^{2}}|\xi|^{2m\alpha}w^{2}\mathrm{d}\xi-(1+\tau^{-\frac{1}{4\alpha}})\int_{\mathbb{R}^{2}}w^{2}\mathrm{d}\xi\bigg).
        \end{align}
        By Gronwall's inequality, we conclude that the bound \eqref{sol-bounded} holds. Finally, if $w_{0}\in L^{2}_{0}(m\alpha)$ and if $\textbf{v}_{0}$ is the velocity field obtained from $w_{0}$ via the Biot-Savart law \eqref{nbs}, then $\textbf{v}_{0}\in L^{2}(\mathbb{R}^{2})^{2}$. In this case, we already observed that $|w(\tau)|_{2}$ converges to zero as $\tau\to +\infty$, and so does $||\xi|^{m\alpha}w(\tau)|_{2}$ by \eqref{gronwall}. This concludes the proof of Theorem \ref{global}.
    \end{proof}
    
    To analyze the proposed fractional-order equation, we adopt a strategy rooted in dynamical systems theory, effectively used in \cite{TC 02} for the integer-order counterpart. The analysis proceeds in three main steps. First, we investigate the spectrum of the linear operator $\mathcal{L}_{\alpha}$. We explicitly characterize its structure, which consists of a discrete spectrum $\sigma_{d}(\mathcal{L}_{\alpha})=\{1-\frac{1}{\alpha}-\frac{k}{2\alpha}\mid k\in \mathbb{N}_0\}$, with eigenfunctions corresponding to the partial derivatives of the fractional Oseen-type vortex $G_\alpha$, and a continuous spectrum $\sigma_{c}(\mathcal{L}_{\alpha})$ located in a left half-plane. A crucial step is to work in a weighted space $L^{2}(m\alpha)$, which shifts the edge of the continuous spectrum to $\Re \lambda\leq 1-\frac{1}{2\alpha}-\frac{m}{2}$, see Appendix B; choosing a suitable $m$ then places the continuous spectrum well to the left of the imaginary axis and prevents it from affecting the leading-order dynamics. Second, using the eigenfunctions of the discrete spectrum as a basis, we project the PDE onto an finite-dimensional system of ODEs. This system features a diagonal linear part and a quadratic nonlinearity, a structure amenable to invariant manifold and normal-form techniques \cite{PC 89}. Finally, we prove the existence of a finite-dimensional invariant manifold tangent to the "slow" eigenspace at the origin. The system's long-term behavior is then entirely governed by the flow on this manifold. The primary technical challenges are posed by the need to control the shifted continuous spectrum and to handle the lack of smoothness exhibited by the nonlinearity in the weighted space.

    Rather than tackling the differential equation directly, we analyze the associated semiflow. By deriving appropriate estimates for this semiflow, we can directly apply the invariant manifold theorem of Chen, Hale, and Tan \cite{XC 97}, thereby completing the analysis. To apply the invariant manifold theorem, we localize the dynamics of \eqref{nve} near the equilibrium $w = 0$ by truncating the nonlinear term outside a sufficiently small neighborhood of the origin.
   
    Let $\chi: L^{2}(m\alpha) \to \mathbb{R}$ be a bounded $C^{\infty}$ function satisfying $0 \leq \chi \leq 1$,
    \begin{align*}
        \chi(w) = 1 \ \text{for} \ \|w\|_{m\alpha} \leq 1, \quad \text{and} \quad \chi(w) = 0 \ \text{for} \ \|w\|_{m\alpha} \geq 2.
    \end{align*}

    For any $r_0 > 0$, define its scaled version $\chi_{r_0}(w) := \chi(w / r_0)$. Then $\chi_{r_0}(w) = 1$ if $\|w\|_{m\alpha} \leq r_0$, and $\chi_{r_0}(w) = 0$ if $\|w\|_{m\alpha} \geq 2r_0$.

    \begin{remark}
        We remark that such a smooth cutoff function $\chi$ can be constructed in a standard way within the Hilbert space setting, in particular for the weighted space $L^{2}(m\alpha)$.
    \end{remark}

    We therefore study the regularized equation
    \begin{align}\label{truncation}
        \partial_{\tau} w = \mathcal{L}_{\alpha} w - (\mathbf{v}\!\cdot\!\nabla)\big(\chi_{r_{0}}(w)\,w\big),
    \end{align}
    which removes the potential ill-posedness by truncation. Here $\mathcal{L}_{\alpha}$ denotes the linear fractional part. By construction, \eqref{truncation} matches \eqref{nve} inside the $r_0$-ball and degenerates to the linear equation outside the $2r_0$-ball. Applying an estimate similar to Lemma \ref{nonlinear-es} guarantees globally defined, uniformly bounded solutions for parameters $2 \le m < 4$. This yields a global semiflow $\Phi_{\tau}^{r_{0},m\alpha}$ for the regularized system, in contrast to the local semiflow $\Phi_{\tau}^{m\alpha}$ of the original system \eqref{nve}. The following proposition establishes the decomposition of the time-one map $\Phi_1^{r_0, m\alpha}$, which is crucial for the subsequent analysis.

    \begin{proposition}\label{decomposition}
        Fix $2\leq m<4$, $r_{0}>0$ and let $\Phi_{\tau}^{r_{0,m\alpha}}$ be the semiflow on $L^{2}(m\alpha)$ defined by \eqref{truncation}. If $r_{0}>0$ is sufficiently small,  then the time-one map \(\Phi^{r_0,m\alpha}_1\) admits the decomposition
        \begin{align}\label{semi-decompose}
            \Phi_{1}^{r_{0},m\alpha}=\Lambda+\mathscr{R},
        \end{align}
        where $\Lambda\equiv e^{\mathcal{L}_{\alpha}}$ is a bounded linear operator on $L^{2}(m\alpha)$, and $\mathscr{R}: L^{2}(m\alpha)\to L^{2}(m\alpha)$ is a $C^{\infty}$ map satisfying $\mathscr{R}(0)=0$, $D\mathscr{R}(0)=0$. Moreover, $\mathscr{R}$ is globally Lipschitz, and there exists $L>0$ (independent of $r_{0}$) such that $Lip(\mathscr{R})\leq Lr_{0}$.
    \end{proposition}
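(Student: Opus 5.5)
The plan is to write the time-one map through the Duhamel formula for \eqref{truncation}, as in \eqref{sol-inte}. For $w_0\in L^2(m\alpha)$ let $w(\tau)=\Phi^{r_0,m\alpha}_\tau(w_0)$; then
\begin{align*}
\Phi_1^{r_0,m\alpha}(w_0)=e^{\mathcal L_\alpha}w_0-\int_0^1 e^{-\frac{1}{2\alpha}(1-s)}\nabla\cdot e^{(1-s)\mathcal L_\alpha}\bigl(\mathbf v(s)\,\chi_{r_0}(w(s))w(s)\bigr)\mathrm{d}s .
\end{align*}
This uses $\nabla\cdot\mathbf v=0$ and the commutation $\partial_i e^{\tau\mathcal L_\alpha}=e^{\tau/2\alpha}e^{\tau\mathcal L_\alpha}\partial_i$. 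So $\Lambda=e^{\mathcal L_\alpha}$, which is bounded on $L^2(m\alpha)$ by Appendix B, and $\mathscr R(w_0)$ is the integral term. Since $\mathscr R(0)=0$ trivially, the work is in the Lipschitz bound and in smoothness.

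\textbf{Step 1 (truncated quadratic bound).} Set $N(w)=\mathbf v[w]\,\chi_{r_0}(w)\,w$, where $\mathbf v[w]$ is the Biot--Savart velocity \eqref{nbs}. Write $N(w_1)-N(w_2)$ as
\[
\chi_{r_0}(w_1)\bigl(\mathbf v_1 w_1-\mathbf v_2 w_2\bigr)+\bigl(\chi_{r_0}(w_1)-\chi_{r_0}(w_2)\bigr)\mathbf v_2 w_2 .
\]
Then apply the H\"older and Biot--Savart estimates of Lemma \ref{nonlinear-es} and Theorem \ref{global}, using $|b^{m\alpha}\mathbf v w|_q\le C\|w\|_{m\alpha}^2$ and $\mathrm{Lip}(\chi_{r_0})\le C/r_0$. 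Assume without loss of generality that $\|w_2\|_{m\alpha}\le\|w_1\|_{m\alpha}$. Both terms vanish unless $\|w_2\|_{m\alpha}\le 2r_0$.
\begin{itemize}
\item If $\|w_1\|_{m\alpha}\le 4r_0$, each term is bounded by $Cr_0\|w_1-w_2\|_{m\alpha}$, since $(C/r_0)\cdot r_0^2=Cr_0$.
\item If $\|w_1\|_{m\alpha}>4r_0$, then $\chi_{r_0}(w_1)=0$ and $\|w_1-w_2\|_{m\alpha}\ge 2r_0$. The remaining term is at most $Cr_0^2\le Cr_0\|w_1-w_2\|_{m\alpha}$.
\end{itemize}
Either way,
\[
|b^{m\alpha}(N(w_1)-N(w_2))|_q\le L_1 r_0\|w_1-w_2\|_{m\alpha},
\]
with $L_1$ independent of $r_0$.

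\textbf{Step 2 (Lipschitz bound for $\mathscr R$).} Apply the estimate of Proposition \ref{group-est} used in Lemma \ref{nonlinear-es}, whose singularity $(1-s)^{-\gamma}$ with $\gamma<1$ is integrable because $\alpha>1/2$. This gives
\[
\|\mathscr R(w_0)-\mathscr R(\tilde w_0)\|_{m\alpha}\le C L_1 r_0\sup_{s\in[0,1]}\|w(s)-\tilde w(s)\|_{m\alpha}.
\]
It remains to control the difference of the two trajectories on $[0,1]$. The same Duhamel estimate together with a singular Gronwall inequality gives
\[
\sup_{s\in[0,1]}\|w(s)-\tilde w(s)\|_{m\alpha}\le \bigl(\sup_{s\le1}\|e^{s\mathcal L_\alpha}\|\bigr)\|w_0-\tilde w_0\|_{m\alpha}+CL_1r_0\sup_{s\in[0,1]}\|w(s)-\tilde w(s)\|_{m\alpha}.
\]
Taking $r_0$ small enough that $CL_1r_0\le 1/2$ closes the estimate and yields $\mathrm{Lip}(\mathscr R)\le Lr_0$. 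This argument also gives global existence of the semiflow, because the truncated nonlinearity is globally Lipschitz.

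\textbf{Step 3 (smoothness, and $D\mathscr R(0)=0$).} The map $w\mapsto N(w)$ from $L^2(m\alpha)$ into the weighted $L^q$ space $\{f:\ b^{m\alpha}f\in L^q\}$ is the product of a bounded bilinear map with the $C^\infty$ cutoff $\chi_{r_0}$, so it is $C^\infty$. Then I will apply the implicit function theorem in $X_1=C([0,1],L^2(m\alpha))$ to the fixed-point equation $w=\Psi[w_0,w]$, where the linear integral operator is bounded by Step 2 and $I-D_w\Psi$ is invertible for small $r_0$. This shows that $w_0\mapsto w(\cdot)\in X_1$ is $C^\infty$, and hence so is $\mathscr R$. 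Since $N$ is quadratic near $0$, we have $DN(0)=0$, and the chain rule gives $D\mathscr R(0)=0$.

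The main obstacle I expect is Step 3. The nonlinearity fails to be smooth as a map into $L^2(m\alpha)$ itself, because of the derivative loss in $\nabla\cdot$, so smoothness must be routed through the smoothing estimate of Proposition \ref{group-est}. One must check that the implicit-function argument goes through uniformly in $r_0$, and that a smooth cutoff $\chi$ with globally bounded derivatives exists on the Hilbert space, as asserted in the Remark.
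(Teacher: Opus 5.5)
Your proposal is correct and follows the paper's proof. Both write $\Phi_1^{r_0,m\alpha}$ in Duhamel form, obtain the trajectory estimate \eqref{3.14} as in Lemma \ref{nonlinear-es}, absorb the $Kr_0$ term once $r_0$ is small, and conclude $\mathrm{Lip}(\mathscr R)\le Lr_0$ with $L=2CK$. Your Step 1 (the case analysis giving an $O(r_0)$ Lipschitz constant for the truncated nonlinearity in the weighted $L^q$ norm) and Step 3 (smoothness and $D\mathscr R(0)=0$ via the implicit function theorem in $C([0,1],L^2(m\alpha))$) simply fill in details that the paper dismisses with ``proceeding as in'' and ``it is not difficult to show.''
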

    \begin{proof}
        Let $w_{1}, w_{2}\in L^{2}(m\alpha)$, and define $w_{i}(\tau)=\Phi_{\tau}^{r_{0},m\alpha}w_{i}$ for $i=1,2$, $0\leq \tau \leq 1$. Then, for $i=1,2$,
        \begin{align*}
            w_{i}(\tau)=e^{\tau\mathcal{L}_{\alpha}}w_{i}-\int_{0}^{\tau}e^{-\frac{\tau-s}{2\alpha}}\nabla\cdot e^{(\tau-s)\mathcal{L}_{\alpha}}(\textbf{v}_{i}\chi_{r_{0}}(w_{i}(s))w_{i}(s))\mathrm{d}s,
        \end{align*}
        where $\textbf{v}_{i}(\tau)$ is the velocity field obtained from $w_{i}(\tau)$ via the Biot-Savart law \eqref{nbs}. Proceeding as in the proof Lemma \ref{nonlinear-es}, we find that there exists $C\geq 1$ and $K>0$ such that 
        \begin{align}\label{3.14}
            \sup\limits_{0\leq \tau\leq 1}\|w_{1}(\tau)-w_{2}(\tau)\|_{m\alpha} \leq C\|w_{1}-w_{2}\|_{m\alpha}+Kr_{0}\sup\limits_{0\leq \tau\leq 1} \|w_{1}(\tau)-w_{2}(\tau)\|_{m\alpha}.
        \end{align}
        Thus, assuming $Kr_{0}\leq \frac{1}{2}$, we obtain $\|\Phi_{\tau}^{r_{0},m\alpha}w_{1}-\Phi_{\tau}^{r_{0},m\alpha}w_{2}\|_{m\alpha}\leq 2C\|w_{1}-w_{2}\|_{m\alpha}$ for all $\tau\in [0,1]$. We now define $\mathscr{R}=\Phi_{1}^{r_{0},m\alpha}-\Lambda$, so that 
        \begin{align}\label{3.15}
            \mathscr{R}(w_{i})=-\int_{0}^{1}e^{-\frac{1}{2}(1-\tau)}\nabla\cdot e^{(1-\tau)\mathcal{L}_{\alpha}}(\textbf{v}_{i}(\tau)\chi_{r_{0}}(w_{i}(\tau))w_{i}(\tau))\mathrm{d}\tau, \qquad i=1,2.
        \end{align}
        In view of \eqref{3.14}, we have $\|\mathscr{R}(w_{1})-\mathscr{R}(w_{2})\|_{m\alpha}\leq Lr_{0}\|w_{1}-w_{2}\|_{m\alpha}$, where $L=2CK$. Moreover, using \eqref{3.15}, it is not difficult to show that $\mathscr{R}:L^{2}(m\alpha)\to L^{2}(m\alpha)$ is smooth and satisfied $D\mathscr{R}(0)=0$.
    \end{proof}

    Fix $k\in\mathbb{N}$, we assume that the weight parameter $m$ satisfies both the global existence condition $2\le m<4$ (as established in Theorem \ref{global}) and the spectral gap condition $m\alpha > k+1$. As recalled in Appendix B, the spectrum of the linear evolution operator $\Lambda=e^{\mathcal{L}_\alpha}$ on $L^{2}(m\alpha)$ splits as
    \begin{align*}
        \sigma(\Lambda)=\Sigma_c\cup\Sigma_d,\qquad \Sigma_d=\Bigl\{e^{1-\frac{1}{\alpha}-\frac{n}{2\alpha}}:\ n\in\mathbb{N}_0\Bigr\},\qquad \Sigma_c\subset\Bigl\{\lambda\in\mathbb{C}:\ |\lambda|\le e^{\frac{2\alpha-1-m\alpha}{2\alpha}}\Bigr\}.
    \end{align*}
    If $m\alpha-1>k$, then $\Lambda$ has a spectral gap and hence admits $k+1$ isolated eigenvalues $\lambda_0,\dots,\lambda_k$ separated from $\Sigma_c$. Let $P_k$ be the spectral projection onto the span of the corresponding eigenfunctions in $L^{2}(m\alpha)$, and set $Q_k:=I-P_k$. Applying the invariant manifold theory of \cite{XC 97} to the semiflow $\Phi_{\tau}^{r_{0},m\alpha}$ yields the following result.

    \begin{theorem}\label{invarint}
        Fix $k\in \mathbb{N}$, $2\leq m<4$, $m\alpha>k+1$ and choose $\mu_{1}, \mu_{2}\in \mathbb{R}$ such that $\frac{k}{2\alpha}+\frac1\alpha-1<\mu_1<\mu_2<\min\left\{\frac{k+1}{2\alpha}+\frac1\alpha-1,\,\frac{m}{2}+\frac{1}{2\alpha}-1\right\}$. Let $P_{k}$, $Q_{k}$ be the spectral projections defined above, and let $E_{\text{d}}=P_{k}L^{2}(m\alpha)$, $E_{\text{c}}=Q_{k}L^{2}(m\alpha)$. Then, for $r_{0}>0$ sufficiently small, there exists a $C^{1}$ and globally Lipschitz map $g: E_{\text{d}}\to E_{\text{c}}$ with $g(0)=0$, $Dg(0)=0$, such that the submanifold
        \begin{align*}
            W_{\text{d}}=\{w_{s}+g(w_{s})\ |\ w_{s}\in E_{\text{d}}\}
        \end{align*}
        has the following properties:
        \begin{enumerate}
            \item (Invariance) The restriction $W_{\text{d}}$ of the semiflow $\Phi_{\tau}^{r_{0},m\alpha}$ can be extended to a Lipschitz flow on $W_{\text{d}}$, there exists a unique negative semi-orbit contained in $W_{\text{d}}$ with $w(0)=w_{0}$. If $\{w(\tau)\}_{\tau\leq 0}$ is a negative semi-orbit contained in $W_{\text{d}}$, then
            \begin{align}\label{wc-estimate}
                \underset{\tau\to -\infty}{\limsup}\frac{1}{|\tau|}\ln \|w(\tau)\|_{m\alpha}\leq \mu_{1}.
            \end{align}
            Conversely, if a negative semi-orbit of $\Phi_{\tau}^{r_{0},m\alpha}$ satisfies 
            \begin{align}
                \underset{\tau\to -\infty}{\limsup}\frac{1}{|\tau|}\ln \|w(\tau)\|_{m\alpha}\leq \mu_{2},
            \end{align}
            then it lies in $W_{\text{d}}$.
            \item (Invariant Foliation) There is a continuous map $h: L^{2}(m\alpha)\times E_{\text{c}}\to E_{\text{d}}$ such that, for each $w\in W_{\text{d}}$, $h(w,Q_{k}(w))=P_{k}(w)$ and the manifold $M_{w}=\{h(w,w_{f})+w_{f}\ |\ w_{f}\in E_{\text{c}}\}$ passing through $w$ satisfies $\Phi_{\tau}(M_{w})\subset M_{\Phi_{\tau}^{r_{0},m\alpha}(w)}$ and 
            \begin{align}\label{3.18}
                M_{w}=\bigg\{\tilde{w}\in L^{2}(m\alpha)\ \bigg|\ \underset{\tau\to -\infty}{\limsup}\frac{1}{|\tau|}\ln \|\Phi_{\tau}^{r_{0},m\alpha}(\tilde{w})-\Phi_{\tau}^{r_{0},m\alpha}(w)\|_{m\alpha}\leq \mu_{2}\bigg\}.
            \end{align}
            Moreover, $h:\ L^{2}(m\alpha)\times E_{\text{c}}\to E_{\text{d}}$ is $C^{1}$ in the $E_{\text{c}}$ direction.
            \item (Completeness) For every $w\in L^{2}(m\alpha)$, $M_{w}\cap W_{\text{d}}$ is exactly a single point. In particular, $\{M_{w}\}_{w\in W_{\text{d}}}$ is a foliation of $L^{2}(m\alpha)$ over $W_{\text{d}}$.
        \end{enumerate}
    \end{theorem}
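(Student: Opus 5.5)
The plan is to reduce Theorem \ref{invarint} to the abstract invariant manifold and foliation theorem of Chen, Hale and Tan \cite{XC 97}, applied to the time-one map $\Phi_1^{r_0,m\alpha}=\Lambda+\mathscr R$ from Proposition \ref{decomposition}. That theorem concerns maps $T=\Lambda+\mathscr R$ on a Banach space $X=E_{\mathrm d}\oplus E_{\mathrm c}$. It needs three hypotheses:
\begin{itemize}
\item[(i)] $\Lambda$ leaves both summands invariant and $\Lambda|_{E_{\mathrm d}}$ is invertible;
\item[(ii)] there are constants $a<b$ with $\|\Lambda^{-n}|_{E_{\mathrm d}}\|\le Ce^{-nb'}$ and $\|\Lambda^{n}|_{E_{\mathrm c}}\|\le Ce^{na'}$, where the gap contains $[-\mu_2,-\mu_1]$;
\item[(iii)] $\mathrm{Lip}(\mathscr R)$ is small relative to the gap.
\end{itemize}
Under these hypotheses it yields a Lipschitz (indeed $C^1$ if $\mathscr R\in C^1$) graph $W_{\mathrm d}$ characterized by backward growth rates, an invariant foliation $\{M_w\}$ characterized by \eqref{3.18}, and completeness. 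The conclusions for the semiflow then follow from those for the time-one map plus invariance under $\Phi_\tau$, $\tau\in[0,1]$.

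\textbf{Step 1: spectral gap estimates.} By Appendix B, $E_{\mathrm d}=P_kL^2(m\alpha)$ is spanned by the eigenfunctions of $\mathcal L_\alpha$ with eigenvalues $1-\frac1\alpha-\frac n{2\alpha}$, $0\le n\le k$. It is finite-dimensional and $\mathcal L_\alpha$ is diagonal on it. Hence
\[
\|e^{-\tau\mathcal L_\alpha}P_k\|\le Ce^{\tau(\frac k{2\alpha}+\frac1\alpha-1)}, \qquad \tau\ge 0.
\]
On $E_{\mathrm c}$ the spectrum of $\mathcal L_\alpha$ lies in
\[
\Re\lambda\le\max\Bigl\{1-\tfrac1\alpha-\tfrac{k+1}{2\alpha},\;1-\tfrac1{2\alpha}-\tfrac m2\Bigr\}.
\]
I need the growth bound
\[
\|e^{\tau\mathcal L_\alpha}Q_k\|\le C_\epsilon e^{-\tau(\nu-\epsilon)}, \qquad \nu=\min\Bigl\{\tfrac{k+1}{2\alpha}+\tfrac1\alpha-1,\;\tfrac m2+\tfrac1{2\alpha}-1\Bigr\},
\]
which is a genuine semigroup estimate, not just a spectral statement. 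The route I would take is the explicit kernel \eqref{group-P}. Write $e^{\tau\mathcal L_\alpha}$ in terms of the fractional heat kernel, subtract the Taylor polynomial of order $k$ of the kernel in $\eta$; this is exactly the action of $Q_k$ in the moment coordinates. Then use the weighted estimates of Proposition \ref{group-est} together with $|\xi|^{-2-2\alpha}$ tail decay of the kernel. This decay is what produces the cap $\frac m2+\frac1{2\alpha}-1$. Since $\mu_1<\mu_2<\nu$ and $\mu_1>\frac k{2\alpha}+\frac1\alpha-1$, choosing $\epsilon$ small places $[\mu_1,\mu_2]$ strictly inside the gap for the time-one map.

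\textbf{Step 2: smallness and smoothness of the remainder.} Proposition \ref{decomposition} gives $\mathscr R\in C^\infty$, $\mathscr R(0)=0$, $D\mathscr R(0)=0$, and $\mathrm{Lip}(\mathscr R)\le Lr_0$ with $L$ independent of $r_0$. Since the gap constants from Step 1 do not depend on $r_0$, taking $r_0$ small satisfies the gap condition of \cite{XC 97}. This produces $g$ globally Lipschitz and $C^1$ with $g(0)=0$. The condition $Dg(0)=0$ follows from $D\mathscr R(0)=0$, because the tangent space at $0$ is the invariant subspace of $\Lambda$ in the slow range, namely $E_{\mathrm d}$.

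\textbf{Step 3: passage to the semiflow.} Each $\Phi_\tau^{r_0,m\alpha}$ with $\tau\in[0,1]$ commutes with $\Phi_1$ and is Lipschitz uniformly in $\tau$ by \eqref{3.14}. It therefore maps orbits with backward rate $\le\mu_2$ to orbits with the same rate. By the characterization of $W_{\mathrm d}$ it preserves $W_{\mathrm d}$, and likewise it maps $M_w$ into $M_{\Phi_\tau w}$. On $W_{\mathrm d}$ the reduced map $w_s\mapsto P_k\Phi_1(w_s+g(w_s))$ is a Lipschitz homeomorphism of $E_{\mathrm d}$, being a small perturbation of the invertible $\Lambda|_{E_{\mathrm d}}$. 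This gives backward extension, unique negative semi-orbits, and the estimate \eqref{wc-estimate}. The intermediate times are controlled by the uniform Lipschitz bounds on $[0,1]$. Completeness is then the fiber–graph intersection property from \cite{XC 97}.

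The main obstacle is Step 1, and specifically the bound on $e^{\tau\mathcal L_\alpha}$ restricted to $E_{\mathrm c}$. For $\alpha<1$ the kernel has only algebraic tails, so the essential spectrum edge and the resolvent growth must be handled by hand from the kernel rather than by a Hermite-function diagonalization. I would also check that $m<4$ is compatible with $m\alpha>k+1$, so that the Taylor subtraction up to order $k$ is legitimate in $L^2(m\alpha)$.
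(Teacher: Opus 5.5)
Your overall architecture is the paper's. Its proof consists of checking (H.1)--(H.4) of \cite{XC 97} using \eqref{3.14}, Proposition \ref{decomposition} and the spectral picture of Appendix B, and your Steps 2--3 match this. The gap is in Step 1, and it is not the one you anticipated.

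\emph{Where the rate comes from.} The cap $\frac m2+\frac1{2\alpha}-1$ does not come from the kernel tail. It comes from the dilation in \eqref{group-P}: $\bigl||\xi|^{m\alpha}f(e^{\tau/2\alpha}\cdot)\bigr|_2=e^{-\tau(\frac m2+\frac1{2\alpha})}\bigl||\xi|^{m\alpha}f\bigr|_2$, and with the prefactor $e^{\tau}$ this is exactly $e^{-\tau(\frac m2+\frac1{2\alpha}-1)}$. At $\alpha=1$, where the Gaussian kernel has no algebraic tail, the same computation gives the classical edge $\frac{m-1}{2}$.

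\emph{What the tail actually does.} The tail $\phi_0(y)\sim c|y|^{-2-2\alpha}$ restricts which weights are admissible at all. Take $w_0\in C_c^\infty$ with $\int w_0\neq0$. Then $e^{\tau\mathcal L_\alpha}w_0(\xi)\sim c(\tau)|\xi|^{-2-2\alpha}$ as $|\xi|\to\infty$, with $c(\tau)\neq0$ for $\tau>0$, and this lies in $L^2(m\alpha)$ iff $m\alpha<1+2\alpha$. Likewise $G_\alpha\in L^2(m\alpha)$ iff $m\alpha<1+2\alpha$; this is the case $k=0$ of the membership condition in Theorem \ref{sp}. Proposition \ref{group-est} cannot help: for $p=q=2$, $\beta=0$ its hypothesis reads $m<2$.

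\emph{Consequence.} $\Lambda=e^{\mathcal L_\alpha}$ is a bounded operator on $L^2(m\alpha)$ containing the eigenfunctions $\partial^\beta G_\alpha$, $|\beta|\le k$, only if $k+1<m\alpha<1+2\alpha$. Since $2\alpha<2$, this forces $k\le1$ and $m<2+\frac1\alpha$. For $k\ge2$, or whenever $m\alpha\ge1+2\alpha$, neither the decomposition $\Phi_1^{r_0,m\alpha}=\Lambda+\mathscr R$ nor the splitting $E_{\mathrm d}\oplus E_{\mathrm c}$ exists in $L^2(m\alpha)$, so \cite{XC 97} cannot be invoked. The compatibility check you planned ($m<4$ against $m\alpha>k+1$, to legitimize the Taylor subtraction) is therefore not the binding one; the upper bound $m\alpha<1+2\alpha$ is.

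The paper's proof does not address this either. It reads the $E_{\mathrm c}$ bound in (H.3) off the spectral location and takes boundedness of $\Lambda$ from Proposition \ref{decomposition}.

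In the range $k+1<m\alpha<1+2\alpha$, your kernel-based route is a reasonable way to supply the missing semigroup estimate, with two changes. First, replace the Peetre-based estimate by $b^{m\alpha}(\xi)\le C\bigl(b^{m\alpha}(\eta)+|\xi-\eta|^{m\alpha}\bigr)$ together with Young's inequality; this gives boundedness of $e^{\tau\mathcal L_\alpha}$ on $L^2(m\alpha)$ for $1<m\alpha<1+2\alpha$. Second, extract the decay rate from the dilation rather than from the tail.
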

    \begin{remark}
        It should be noted that $W_{\text{d}}$ is not intrinsically unique because its construction relies on the specific choice of the cut-off function $\chi$. This dependence constitutes the sole source of non-uniqueness; once the cut-off function is fixed, the procedure outlined in \cite{XC 97}.
    \end{remark}
    \begin{proof}
        We only need to prove that hypotheses (H.1)-(H.4) of Theorem 1.1 in \cite{XC 97} hold for the semiflow $\Phi_{\tau}^{r_{0},m\alpha}$. First choose $r_{0}>0$ small, then (H.1) holds because the map $w\mapsto \Phi_{\tau}^{r_{0},m\alpha}w$ is globally Lipschitz on $L^{2}(m\alpha)$, uniformly for $\tau\in [0,1]$, see \eqref{3.14}. Assumption (H.2) corresponds to the decomposition
        \begin{align*}
            \Phi_{\tau}^{r_{0},m\alpha}=\Lambda +\mathscr{R},
        \end{align*}
        which is exactly the representation obtained in Proposition \ref{decomposition}. Condition (H.4) requires the Lipschitz constant $\mathscr{R}$ to be sufficiently small; this is achieved again by taking $r_{0}$ small enough.

        To verify (H.3), recall that $L^{2}(m\alpha)=E_{\text{d}}\oplus E_{\text{c}}$ and define 
        \begin{align*}
            \Lambda_{\text{d}}=P_{k}\Lambda P_{k},\qquad \Lambda_{\text{c}}=Q_{k}\Lambda Q_{k}.
        \end{align*}
        The spectrum of $\Lambda_{\text{d}}$ is $\sigma(\Lambda_{\text{d}})=\{e^{1-1/\alpha}, e^{1-3/2\alpha},\cdots,e^{1-\frac{2+k}{2\alpha}}\}$, and all the eigenvalues are semisimple. Hence $\Lambda_{\text{d}}$ is invertible and there exists a constant $C\geq 1$ such that 
        \begin{align*}
            \|\Lambda^{-j}_{\text{d}}w\|_{m\alpha}\leq Ce^{j\frac{2+k-2\alpha}{2\alpha}}\|w\|_{m\alpha},\qquad j\in \mathbb{N},w\in E_{\text{d}}.
        \end{align*}
        On the complementary subspace \(E_c\), the spectrum consists of the remaining discrete eigenvalues \(\lambda_n=1-\frac1\alpha-\frac{n}{2\alpha}\), \(n\ge k+1\), together with the continuous spectrum satisfying
        \begin{align*}
            \operatorname{Re}\lambda\le 1-\frac{1}{2\alpha}-\frac{m}{2}.
        \end{align*}
        Hence, for any $0<\varepsilon<\min\left\{\frac{k+1}{2\alpha}+\frac1\alpha-1,\,\frac{m}{2}+\frac{1}{2\alpha}-1 \right\}-\mu_2$, there exists \(C_\varepsilon>0\) such that
        \begin{align*}
            \|\Lambda_c^j w\|_{m\alpha}\le C_\varepsilon\exp\left[-j\left(\min\left\{\frac{k+1}{2\alpha}+\frac1\alpha-1,\,\frac{m}{2}+\frac{1}{2\alpha}-1\right\}-\varepsilon\right)\right]\|w\|_{m\alpha}.
        \end{align*}
        In particular, since \(\mu_2\) is chosen below this minimum, we obtain 
        \begin{align*}
            \|\Lambda_c^j w\|_{m\alpha}\le C e^{-\mu_2 j}\|w\|_{m\alpha}, \qquad j\in\mathbb N,\quad w\in E_c.
        \end{align*}
        Together with
        \begin{align*}
            \|\Lambda_d^{-j}w\|_{m\alpha}\le C e^{\left(\frac{k}{2\alpha}+\frac1\alpha-1\right)j}\|w\|_{m\alpha} \le C e^{\mu_1 j}\|w\|_{m\alpha},
        \end{align*}
        this verifies hypothesis (H.3).
    \end{proof}

    We must first satisfy the foundational condition by selecting a sufficiently small radius $r_{0}>0$ such that the conclusions of Theorem \ref{invarint} hold valid. A direct consequence of Theorem \ref{global} is the existence of a value $r_{1}>0$ which guarantees that any solution $w(\tau)$ starting from the region $\|w(0)\|_{m\alpha}\leq r_{1}$ remains within the ball $\|w(\tau)\|_{m\alpha}\leq r_{0}$ for all $\tau\geq 0$. For these trajectories, the truncated semiflow $\Phi_{\tau}^{r_{0},m\alpha}$ coincides with the original semiflow $\Phi_{\tau}^{m\alpha}$. This coincidence confirms that the invariant manifolds established in Theorem \ref{invarint} are also locally invariant for the original nonlinear system. Consequently, the phase space of the Navier-Stokes system possesses a rich family of finite-dimensional invariant manifolds. Moreover, parts 2 and 3 of Theorem \ref{invarint} are particularly significant as they quantify the convergence rate of solutions toward these manifolds. Since this result is indispensable for our subsequent analysis, we formally state it as the following Corollary.
    \begin{corollary}\label{local invariant}
        Fix $k\in \mathbb{N}$, $2\leq m<4$, and let $W_{\text{d}}$ be the submanifold of $L^{2}(m\alpha)$ constructed in Theorem \ref{invarint}. Define 
        \begin{align}
            W_{\text{d}}^{\text{loc}}=W_{\text{d}}\cap\{w\in L^{2}(m\alpha)|\|w\|_{m\alpha}<r_{0}\}.
        \end{align}
        Then $W_{\text{d}}^{\text{loc}}$ is locally invariant under the semiflow $\Phi_{\tau}^{m\alpha}$ defined by \eqref{nve}. If $\{w(\tau)\}_{\tau\leq 0}$ is a negative-orbit of \eqref{nve} such that $\|w(\tau)\|_{m\alpha}<r_{0}$ for all $\tau\leq 0$, then $w(\tau)\in W_{\text{d}}^{\text{loc}}$ for all $\tau\leq 0$. Moreover, for any $\mu<\mu_{2}$ (where $\mu_{2}$ is as in Theorem \ref{invarint}), there exist $r_{2}>0$ and $C>0$ with the following property: for all $\tilde{w}_{0}\in L^{2}(m\alpha)$ with $\|\tilde{w}_{0}\|_{m\alpha}\leq r_{2}$, there exists a unique $w_{0}\in W_{\text{d}}$ such that $\Phi_{\tau}^{m\alpha}\in W_{\text{d}}^{\text{loc}}$ for all $\tau\geq 0$ and
        \begin{align}\label{loc-invar-estimate}
            \|\Phi_{\tau}^{m\alpha}(\tilde{w}_{0})-\Phi_{\tau}^{m\alpha}(w_{0})\|_{m\alpha}\leq Ce^{-\mu\tau},\qquad \tau\geq 0.
        \end{align}
    \end{corollary}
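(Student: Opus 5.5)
The plan is to deduce the corollary directly from Theorem \ref{invarint} combined with the small-data bound of Theorem \ref{global}, following the classical argument of \cite{TC 02}. The key observation is that the truncated semiflow $\Phi_{\tau}^{r_{0},m\alpha}$ and the original semiflow $\Phi_{\tau}^{m\alpha}$ coincide on every trajectory that stays in the ball $\{\|w\|_{m\alpha}\le r_0\}$, because $\chi_{r_0}\equiv 1$ there.

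\emph{Local invariance and the negative-orbit statement.} Let $w_0\in W_{\mathrm d}^{\mathrm{loc}}$. As long as $\Phi^{m\alpha}_\tau(w_0)$ stays in the $r_0$-ball, it agrees with $\Phi_\tau^{r_0,m\alpha}(w_0)$. By part 1 of Theorem \ref{invarint}, the latter lies in $W_{\mathrm d}$, and this gives local invariance. Now let $\{w(\tau)\}_{\tau\le0}$ be a negative orbit of \eqref{nve} with $\|w(\tau)\|_{m\alpha}<r_0$. It is then also a negative semi-orbit of the truncated flow. It is bounded, so $\limsup_{\tau\to-\infty}|\tau|^{-1}\ln\|w(\tau)\|_{m\alpha}\le 0\le\mu_2$; here we use $\mu_2>\mu_1>\frac{k}{2\alpha}+\frac1\alpha-1>0$, which holds since $\alpha<1$. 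The converse part of item 1 in Theorem \ref{invarint} then places $w(\tau)$ in $W_{\mathrm d}$, and hence in $W_{\mathrm d}^{\mathrm{loc}}$.

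\emph{Attraction estimate.} Choose $r_2>0$ small enough that Theorem \ref{global} guarantees $\|\Phi^{m\alpha}_\tau(\tilde w_0)\|_{m\alpha}\le r_0/2$ for all $\tau\ge0$ whenever $\|\tilde w_0\|_{m\alpha}\le r_2$. This is possible because $C_1(\|w_0\|)\to0$. By completeness (part 3), $M_{\tilde w_0}\cap W_{\mathrm d}$ consists of a single point $w_0$, which is our candidate. Its uniqueness follows from the uniqueness of this intersection point, together with the fact that any other point of $W_{\mathrm d}$ attracted at rate $\mu>\mu_1$ would produce two points of $W_{\mathrm d}$ whose forward trajectories approach each other faster than the dynamics on $W_{\mathrm d}$ permit.

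The foliation invariance $\Phi_\tau(M_w)\subset M_{\Phi_\tau w}$, in its forward-time form from \cite{XC 97}, gives for the truncated flow
\[
\|\Phi_\tau^{r_0,m\alpha}(\tilde w_0)-\Phi_\tau^{r_0,m\alpha}(w_0)\|_{m\alpha}\le Ce^{-\mu\tau}\|\tilde w_0-w_0\|_{m\alpha}
\]
for any $\mu<\mu_2$. This uses the fiber contraction coming from the gap estimate on $\Lambda_{\mathrm c}$ in (H.3). Two further facts are needed. First, $\|w_0\|_{m\alpha}\le C\|\tilde w_0\|_{m\alpha}$, which holds because $h$ is continuous and $g$ is Lipschitz with $g(0)=0$, so that $w_0$ depends Lipschitz-continuously on $\tilde w_0$ with $0\mapsto0$. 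Second, the trajectory of $w_0$ stays in the $r_0$-ball. For this we shrink $r_2$ so that $\|w_0\|_{m\alpha}\le r_1$ and apply Theorem \ref{global} again. Then both truncated trajectories are genuine solutions of \eqref{nve}, which gives \eqref{loc-invar-estimate} with $\Phi_\tau^{m\alpha}(w_0)\in W_{\mathrm d}^{\mathrm{loc}}$ for all $\tau\ge0$.

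\emph{Main obstacle.} The delicate step is extracting a forward exponential contraction along the fibers from the characterization \eqref{3.18}, which is stated in terms of backward orbits. I would handle it by invoking the corresponding forward estimate in Theorem 1.1 of \cite{XC 97}, namely that fibers contract at rate $e^{-\mu\tau}$ for any $\mu<\mu_2$, and by controlling the time-continuous semiflow between integer times using the uniform Lipschitz bound \eqref{3.14} on $[0,1]$.
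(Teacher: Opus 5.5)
Your proposal follows the paper's proof: the truncated and original semiflows coincide inside the $r_0$-ball, with $r_1,r_2$ chosen via Theorem \ref{global}. Part 1 of Theorem \ref{invarint} then gives local invariance and captures bounded negative orbits, and your observation that $\mu_2>0$ is exactly what makes that step work. Completeness yields $w_0\in M_{\tilde w_0}\cap W_{\mathrm d}$, and the leaf characterization gives the rate. You are also right on two further points: \eqref{3.18} must be used in its forward-time form, and uniqueness of $w_0$ defined through the decay estimate itself only holds for $\mu$ above the slow rates (e.g.\ $\mu>\mu_1$).

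The one step that is not justified as written is how $w_0$ depends on $\tilde w_0$. Continuity of $h$ and Lipschitz continuity of $g$ do not, by themselves, give Lipschitz or even continuous dependence of the intersection point. So your bound $\|w_0\|_{m\alpha}\le C\|\tilde w_0\|_{m\alpha}$ is unsupported. The paper argues instead as follows. Write $w_0=w_c+g(w_c)$, where $w_c$ is the unique fixed point of $w_c\mapsto h(\tilde w_0,g(w_c))$. This map is a contraction because $h(\tilde w_0,\cdot)$ and $g$ have small Lipschitz constants. Hence $w_c$ depends continuously on $\tilde w_0$ and vanishes at $\tilde w_0=0$. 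Continuity at $0$ is all you need to shrink $r_2$ so that $\|w_0\|_{m\alpha}\le r_1$.
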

    \begin{proof}
        By point 1 of Theorem \ref{invarint}, we can get that $W_{\text{d}}^{\text{loc}}$ is locally invariant under the $\Phi_{\tau}^{m\alpha}$ and contains the negative semi-orbits that stay in a neighborhood of the origin. To guarantee stability, we select a value $r_{1}>0$ such that imposing the restriction $\|w_{0}\|_{m\alpha}\leq r_{1}$ ensures the subsequent trajectory satisfies $\|\Phi_{\tau}^{m\alpha}(w_{0})\|_{m\alpha}<r_{0}$ for all $\tau\geq 0$. Next, we establish the core decomposition. Based on the point 2 and 3 of Theorem \ref{invarint}, for a given initial component $\tilde{w}_{0}\in L^{2}(m\alpha)$, there exists a unique corresponding unstable component $w_{0}$ situated on the intersection $M_{\tilde{w}_{0}}\cap W_{\text{d}}$. By defining this component as $w_{0}=w_{c}+g(w_{c})$, we find through the relevant definitions that $w_{c}$ must be the solution to the fixed-point relation:
        \begin{align}\label{3.21}
            w_{c}=h(\tilde{w}_{0},g(w_{c})).
        \end{align}
        Since the mapping $h$ is continuous and the operator $w_{c}\mapsto h(\tilde{w}_{0},g(w_{c}))$ satisfies a contraction (Lipschitz with small constant), \eqref{3.21} is guaranteed to possess a unique solution $w_{c}$. Importantly, this solution exhibits continuous dependence on $\tilde{w}_{0}$. Moreover, it follows from \eqref{3.18} that $w_{c}=0$ if $\tilde{w}_{0}=0$. Therefore, by continuity, there exists $r_{2}\in (0,r_{1}]$ such that, if $\|\tilde{w}_{0}\|_{m\alpha}\leq r_{2}$, then $\|w_{0}\|_{m\alpha}=\|w_{c}+g(w_{c})\|_{m\alpha}\leq r_{1}$. In this case, $\max(\|\Phi_{\tau}^{m\alpha}(\tilde{w}_{0})\|_{m\alpha},\|\Phi_{\tau}^{m\alpha}(w_{0})\|_{m\alpha})<r_{0}$ for all $\tau\geq 0$, and \eqref{loc-invar-estimate} follows from \eqref{3.18}.
    \end{proof}

    In the fractional case $1/2<\alpha<1$, let $\mu_{1}<\mu_{2}$. By Corollary \ref{local invariant}, any solution $w(\tau)$ of \eqref{nve} with initial data in $W^{\text{loc}}_{\text{d}}$ that is not identically zero cannot decay to the equilibrium faster than $\mathrm{e}^{-\mu_{1}\tau}$ as $\tau\to+\infty$. Following \cite{XC 97}, we therefore refer to $W^{\text{loc}}_{\text{d}}$ as the weak stable manifold of the origin.

    In many applications we are interested in trajectories that converge to the origin at a faster rate, namely with $\mathcal{O}\left(\mathrm{e}^{-\mu_{2}\tau}\right)$ or better. Theorem \ref{invarint} shows that, for sufficiently large $\tau$, every such solution belongs to a distinguished leaf $M_{w}$ of the center-stable foliation $\{M_{w}\}_{w\in W_{\text{d}}}$ passing through the origin. We call this leaf the strong stable manifold and denote by $W^{\text{loc}}_{\text{c}}$ its restriction to a neighbourhood of zero. In contrast with $W^{\text{loc}}_{\text{d}}$, the local strong stable manifold $W^{\text{loc}}_{\text{c}}$ is smooth and uniquely determined; in particular, it is independent of the choice of the cut-off function $\chi$.

    \begin{theorem}\label{loc-strong}
        Fix $k\in \mathbb{N}$, $2\leq m<4$, and let $E_{\text{d}}$, $E_{\text{c}}$ be as in Theorem \ref{invarint}. Then there exists $r_{3}>0$ and a unique $C^{\infty}$ function $f:\{w_{f}\in E_{\text{c}}\ |\ \|w\|_{m\alpha}<r_{3}\}\to E_{\text{d}}$ with $f(0)=0$, $Df(0)=0$, such that the submanifold
        \begin{align*}
            W_{\text{c}}^{\text{loc}}=\{w_{f}+f(w_{f})\ | \ w_{f}\in E_{\text{c}},\ \|w_{f}\|_{m\alpha}< r_{3}\}
        \end{align*}
        satisfies, for any $\mu\in\left(\frac{k+2-2\alpha}{2\alpha},\,\min\left\{\frac{k+3-2\alpha}{2\alpha},\frac{m}{2}+\frac{1}{2\alpha}-1\right\}\right)$.
        \begin{align}\label{local-fast-estimate}
            W_{\text{c}}^{\text{loc}}=\bigg\{w\in L^{2}(m\alpha)\bigg||\|w\||_{m\alpha}<r_{3}, \underset{\tau\to \infty}{\limsup}\tau^{-1}\ln\|\Phi_{\tau}^{m\alpha}w\|_{m\alpha}\leq -\mu\bigg\},
        \end{align}
        where $|\|w\||_{m\alpha}=\max(\|P_{k}w\|_{m\alpha},\|Q_{k}w\|_{m\alpha})$. In particular, if $w_{0}\in W_{\text{c}}^{\text{loc}}$, there exists $T\geq 0$ such that $\Phi_{\tau}^{m\alpha}\in W_{\text{c}}^{\text{loc}}$ for all $\tau\geq T$.
    \end{theorem}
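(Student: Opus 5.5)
This is a local strong stable manifold theorem for the fixed point $0$ of the time-one map $\Phi_1^{m\alpha}$. I would prove it by a Lyapunov--Perron fixed point in exponentially weighted spaces of forward orbits. The classical case in \cite{TC 02} follows the same route. The available ingredients are as follows.

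\begin{itemize}
\item Proposition \ref{decomposition} gives the splitting $\Phi_1^{r_0,m\alpha}=\Lambda+\mathscr R$, with $\mathscr R$ smooth, $\mathscr R(0)=0$, $D\mathscr R(0)=0$ and $\mathrm{Lip}(\mathscr R)\le Lr_0$.
\item On $E_{\rm c}$, the estimates from the proof of Theorem \ref{invarint} give $\|\Lambda_{\rm c}^j\|\le Ce^{-\mu' j}$ for any $\mu'$ below $\min\{\frac{k+3-2\alpha}{2\alpha},\frac m2+\frac1{2\alpha}-1\}$.
\item On $E_{\rm d}$, the lowest eigenvalue is $e^{-(k+2-2\alpha)/2\alpha}$, so $\|\Lambda_{\rm d}^{-j}\|\le Ce^{\frac{k+2-2\alpha}{2\alpha}j}$.
\end{itemize}

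Hence any $\mu$ in the stated interval lies strictly inside a spectral gap.

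\textbf{Step 1 (discrete-time manifold).} Fix such a $\mu$. Let $Y_\mu$ be the space of sequences $(w_n)_{n\ge0}\subset L^2(m\alpha)$ with $\sup_n e^{\mu n}|\|w_n\||_{m\alpha}<\infty$. For $w_f\in E_{\rm c}$, define the Lyapunov--Perron map
\begin{align*}
T(\mathbf w)_n=\Lambda_{\rm c}^n w_f+\sum_{j=0}^{n-1}\Lambda_{\rm c}^{n-1-j}Q_k\mathscr R(w_j)-\sum_{j=n}^{\infty}\Lambda_{\rm d}^{-(j+1-n)}P_k\mathscr R(w_j).
\end{align*}
The first sum converges with weight $e^{-\mu n}$ because $\mu'>\mu$. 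The second converges because $\mu>\frac{k+2-2\alpha}{2\alpha}$. The standard geometric-series estimates then show that $T$ is a contraction on $Y_\mu$ with constant of order $Lr_0$, uniformly in $w_f$. Setting $f(w_f):=P_k\mathbf w_0$ gives a map $f:E_{\rm c}\to E_{\rm d}$.

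Smoothness comes from the fibre contraction / uniform contraction principle. The map $\mathscr R$ is $C^\infty$, and differentiating $T$ $\ell$ times requires the gap condition $\ell\mu<\mu'$ only for the forward $\Lambda_{\rm c}$ part. Here this is not a problem: the strong stable manifold involves only the backward sum over $\Lambda_{\rm d}^{-1}$, which contracts with any power of the weight. So $f\in C^\infty$, and $f(0)=0$, $Df(0)=0$ follow from $D\mathscr R(0)=0$. A fixed point of $T$ is exactly a forward orbit of $\Phi_1^{r_0,m\alpha}$ in $Y_\mu$ with $Q_k w_0=w_f$. Thus the graph of $f$ is the set of points whose time-one iterates decay like $e^{-\mu n}$.

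\textbf{Step 2 (continuous time and removal of the cutoff).} I will pass from integer times to all $\tau\ge0$ using the uniform Lipschitz bound \eqref{3.14} on $[0,1]$, which gives $\|\Phi_\tau w\|\le C\|\Phi_{\lfloor\tau\rfloor}w\|$. I will then choose $r_3$ so small, via Theorem \ref{global} (the choice of $r_1$ preceding Corollary \ref{local invariant}), that orbits starting in the $|\|\cdot\||$-ball of radius $r_3$ that decay stay in the $r_0$-ball. On that ball $\Phi^{r_0,m\alpha}_\tau=\Phi^{m\alpha}_\tau$. This gives the characterization \eqref{local-fast-estimate} with the $\limsup$ condition.

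\textbf{Step 3 (independence of $\mu$ and of $\chi$, uniqueness).} The same argument shows that an orbit decaying at rate $-\mu$ for some $\mu$ in the interval decays at every rate in it, since the fixed point of $T$ is independent of $\mu$ inside the gap. Locally the cutoff is inactive, which yields uniqueness of $f$ and independence from $\chi$. The final claim, that $\Phi_\tau w_0\in W^{\rm loc}_{\rm c}$ for $\tau\ge T$, follows because the orbit decays and the decay set is forward invariant.

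\textbf{Main obstacle.} The delicate step is the converse inclusion in \eqref{local-fast-estimate}. Here the hypothesis is only $\limsup\tau^{-1}\ln\|\Phi_\tau w\|\le-\mu$, which gives $e^{-(\mu-\varepsilon)n}$ decay with an unknown constant, not membership in $Y_\mu$. I would handle this by running the contraction in $Y_{\mu-\varepsilon}$ with $\mu-\varepsilon$ still in the gap, and by using the smallness $|\|w\||<r_3$ together with the forward a priori bound to control the orbit at early times.

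A second concern is that $\mathscr R$ is only smooth as a map into $L^2(m\alpha)$ with the constraint $2\le m<4$. I would rely on Proposition \ref{decomposition} as stated rather than reprove it.
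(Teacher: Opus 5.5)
\textbf{Verdict.} Your argument is correct in substance, but it follows a different route from the paper's.

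\textbf{The paper's route.} The paper does not build $W_{\text{c}}^{\text{loc}}$ from scratch. It takes the invariant foliation $\{M_w\}$ already supplied by the Chen--Hale--Tan theorem in Theorem \ref{invarint} and sets $f(w_f)=h(0,w_f)$, so that $W_{\text{c}}^{\text{loc}}$ is simply the leaf $M_0$ through the origin. It then reads the decay characterization off \eqref{3.18} with $w=0$. Before that, it chooses $r_3$ via Theorem \ref{global} so that the cutoff is inactive, exactly as in your Step 2. It lets $\mu_2$ range over the interval, and it defers the $C^\infty$ regularity and $Df(0)=0$ to Henry's integral-equation argument.

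\textbf{Your route and what each buys.} You instead run a discrete Lyapunov--Perron fixed point for $\Phi_1^{r_0,m\alpha}=\Lambda+\mathscr R$ on forward sequences in $Y_\mu$. The paper's route is short, and it places $W_{\text{c}}^{\text{loc}}$ in the same foliation used in Corollary \ref{local invariant}. Yours is self-contained and has three advantages:
\begin{itemize}
\item the identification of fixed points of $T$ with forward orbits decaying at rate $\mu$ gives the forward-time characterization directly, without going through the foliation statement \eqref{3.18};
\item the nesting $Y_{\mu_b}\subset Y_{\mu_a}$ plus uniqueness makes independence of $\mu$ transparent;
\item the uniform contraction principle produces the $C^\infty$ claim itself, whereas the foliation only gives $h$ of class $C^1$ in the $E_{\text{c}}$ direction.
\end{itemize}

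\textbf{Two points to tighten.}

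First, your explanation of smoothness is muddled. The strong-stable operator $T$ does contain the forward $\Lambda_{\text{c}}$ sum. The actual reason no gap condition enters is this: for decaying weights, the $\ell$-th derivative of the Nemytskii map $\mathbf w\mapsto(\mathscr R(w_n))_n$ sends $Y_\mu^{\ell}$ into $Y_{\ell\mu}\hookrightarrow Y_\mu$. The Nemytskii map is therefore $C^\ell$ from $Y_\mu$ to itself, and you can work in one fixed space. This is exactly why the fast leaf is $C^\infty$ while $g$ is only $C^1$.

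Second, for fixed $r_0$ the contraction constant of $T$ carries the factors $(1-e^{-(\mu'-\mu)})^{-1}$ and $(1-e^{-(\mu-\nu)})^{-1}$, where $\nu=\frac{k+2-2\alpha}{2\alpha}$. These blow up at both ends of the open $\mu$-interval. So ``the fixed point is independent of $\mu$'' only holds on a compact subinterval, whereas the statement asks for one $r_3$ valid for every $\mu$. To close this, add a localization step:
\begin{itemize}
\item Since $D\mathscr R(0)=0$, a decaying orbit eventually enters a ball on which $\mathscr R$ has arbitrarily small Lipschitz constant.
\item From such a late time $N$, run the Lyapunov--Perron argument at any rate in the gap.
\item Compare the result, via uniqueness in the nested spaces, with the manifold at a fixed reference rate.
\end{itemize}
The paper's proof passes over the same point (``since $\mu_2$ was arbitrary''), because its $r_0$ also depends on $\mu_1,\mu_2$.
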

    \begin{proof}
        Choose $r_{3}>0$ sufficiently small so that $\|\Phi_{\tau}^{m\alpha}w_{0}\|_{m\alpha}\leq r_{0}$ for all $\tau\geq 0$ whenever $\|w_{0}\|_{m\alpha}\leq 2r_{3}$. Take the function $h(\cdot,\cdot)$ of the point 2 in Theorem \ref{invarint}, and define 
        \begin{align*}
            f(w_{f})=h(0,w_{f})\quad \text{for all}\ w_{f}\in E_{\text{c}}\ \text{with}\ \|w_{f}\|_{m\alpha}<r_{3}.
        \end{align*}
        Then $f$ is of class $C^{1}$, $f(0)=0$, and the characterization \eqref{local-fast-estimate} with $\mu=\mu_{2}$ follows immediately from \eqref{3.18} with $w=0$. In particular, $f$ is unique. Moreover, since any solution $w(\tau)$ on $W_{\text{c}}^{\text{loc}}$ converges to zero as $\tau\to +\infty$, and set $\mu_*:=\min\left\{\frac{k+3-2\alpha}{2\alpha},\frac{m}{2}+\frac{1}{2\alpha}-1\right\}$. Since \(\mu_2\in\left(\frac{k+2-2\alpha}{2\alpha},\mu_*\right)\) was arbitrary, the characterization \eqref{local-fast-estimate} holds for any $\mu\in\left(\frac{k+2-2\alpha}{2\alpha},\mu_*\right)$. Finally, the smoothness of $f$ and the fact that $Df(0)=0$ can be proved using the integral equation satisfied by $f$. (see \cite[Section 5.2]{DH 81}).
    \end{proof}

    For later use, we introduce the following decay set associated with the strong stable directions. Let $\mu_*:=\min\left\{\frac{k+3-2\alpha}{2\alpha},\frac{m}{2}+\frac{1}{2\alpha}-1\right\}$. For any $\mu\in\left(\frac{k+2-2\alpha}{2\alpha},\,\mu_*\right)$, we define
    \begin{align*}
        W_{\mathrm c}:=\left\{w_0\in L^2(m\alpha)\,\Bigg|\,\limsup_{\tau\to+\infty}\frac1\tau\ln\|w(\tau)\|_{m\alpha}\le -\mu\right\},
    \end{align*}
    where \(w(\tau)\) denotes the solution of \eqref{nve} with initial data \(w_0\).  Thus \(W_{\mathrm c}^{\mu}\) consists of those initial data whose corresponding solutions converge to the origin at least at the exponential rate \(e^{-\mu\tau}\). We emphasize that \(W_{\mathrm c}^{\mu}\) is used here as a decay set; no global manifold structure is asserted in the fractional case.
    
    \begin{remark}\label{rem:global-extension}
        It is instructive to compare the present result with the classical Navier--Stokes equations ($\alpha=1$). In the integer-order case, the local strong stable manifold $W_{\mathrm{c}}^{\mathrm{loc}}$ can be extended to a global invariant manifold $W_{\text{c}}\subset L^2(m)$ by exploiting the backward uniqueness property of the vorticity flow (see, e.g., \cite{DH 81}). However, for the fractional case ($0<\alpha<1$), the backward uniqueness for the dissipative equation involving $(-\Delta)^\alpha$ remains an open problem. Consequently, the standard globalization techniques (such as Henry's geometric theory) are not applicable here. Therefore, in this work, we restrict our analysis to the local invariant manifold $W_{\mathrm{c}}^{\mathrm{loc}}$, which is sufficient to characterize the fine asymptotic behavior of small solutions.
    \end{remark}
    Finally, we will show that invariant manifold is stable for $\alpha$.
    
    \begin{proposition}\label{alpha-stability}
        Fix $k\in\mathbb N$ and choose $2\le m<4$. Let \(I=[\alpha_-,\alpha_+]\subset(1/2,1)\) be a compact interval such that $m\alpha_->k+1$. We consider \eqref{nve} on $L^{2}(m\alpha)$. Let $L^{2}(m\alpha)=E_{\mathrm{d}}(\alpha)\oplus E_{\mathrm{c}}(\alpha)$ be the spectral splitting defined above. Then there exist constants $r_0>0$ and $M>0$ such that, for every $\alpha\in I$, there is a map
        \begin{align*}
            g_\alpha: B_{E_{\mathrm{d}}(\alpha)}(0,r_0)\to E_{\mathrm{c}}(\alpha),
        \end{align*}
        with $g_\alpha(0)=0$ and $Dg_\alpha(0)=0$, which is $C^{1}$ and globally Lipschitz on $B_{E_{\mathrm{d}}(\alpha)}(0,r_0)$ and satisfies $\mathrm{Lip}(g_\alpha)\le M$ (with $M$ independent of $\alpha$). Moreover, the graph
        \begin{align*}
            W_{\mathrm{d}}^{\mathrm{loc}}(\alpha):=\{x+g_\alpha(x)\,:\,x\in B_{E_{\mathrm{d}}(\alpha)}(0,r_0)\}
        \end{align*}
        is a local invariant slow manifold for the semiflow generated by \eqref{nve}. Finally, the map $\alpha\mapsto g_\alpha$ is Lipschitz continuous.
    \end{proposition}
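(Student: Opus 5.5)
The plan is to redo the construction of Theorem \ref{invarint} uniformly in $\alpha\in I$, using a Lyapunov--Perron fixed-point formulation for the time-one map rather than quoting \cite{XC 97} as a black box. The parameter $\alpha$ then enters only through the data of a contraction, and Lipschitz dependence of $g_\alpha$ follows from the uniform contraction principle with parameters.

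\textbf{Step 1: uniform linear estimates.} Since $m\alpha_->k+1$ and $I$ is compact, the isolated eigenvalues $e^{1-\frac1\alpha-\frac{n}{2\alpha}}$, $0\le n\le k$, stay a uniform distance away from the rest of $\sigma(\Lambda_\alpha)$, where $\Lambda_\alpha=e^{\mathcal L_\alpha}$. I will therefore pick one pair $\mu_1<\mu_2$ admissible for every $\alpha\in I$.

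The $\alpha$-dependence of the underlying space $L^2(m\alpha)$ must be handled first. I will work on the fixed space $X:=L^2(m\alpha_+)$, which embeds continuously in every $L^2(m\alpha)$ with $\alpha\in I$. Alternatively, I will restate the result with $m\alpha$ replaced by a fixed weight exponent; the estimates of Lemma \ref{nonlinear-es} and Proposition \ref{decomposition} only need the weight exponent to be at least $m\alpha_-$.

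On $X$ I will use the explicit kernel of $S^\alpha(\tau)$ from Appendix B. The goal is $\|\Lambda_\alpha-\Lambda_{\alpha'}\|_{X\to X}\le C|\alpha-\alpha'|$. I would get this by differentiating the Fourier symbol $e^{-|\xi|^{2\alpha}(1-e^{-\tau/\alpha})\cdots}$ in $\alpha$; the factor $\log|\xi|$ that appears is harmless after the time-one smoothing.

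Kato's perturbation theory \cite{TK 95} then gives Riesz projections $P_k(\alpha)=\frac{1}{2\pi i}\oint_\Gamma(z-\Lambda_\alpha)^{-1}\,\mathrm dz$ over a fixed contour $\Gamma$. These projections are Lipschitz in $\alpha$. Kato's transformation functions $U(\alpha)$ map $E_{\mathrm d}(\alpha_0)\to E_{\mathrm d}(\alpha)$ and $E_{\mathrm c}(\alpha_0)\to E_{\mathrm c}(\alpha)$, are Lipschitz in $\alpha$, and are uniformly bounded with bounded inverses. The dichotomy constants in (H.3) are uniform in $\alpha$, because the resolvent is uniformly bounded on $\Gamma$ and on a circle of radius $e^{-\mu_2}$ enclosing $\Sigma_c$.

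\textbf{Step 2: uniform nonlinear estimates.} Following Proposition \ref{decomposition}, I will check the following for $\mathscr R_\alpha=\Phi^{r_0}_{1,\alpha}-\Lambda_\alpha$:
\begin{align*}
\mathrm{Lip}(\mathscr R_\alpha)\le Lr_0 \text{ with } L \text{ independent of } \alpha\in I, \qquad \sup_{w}\|\mathscr R_\alpha(w)-\mathscr R_{\alpha'}(w)\|\le C r_0^2|\alpha-\alpha'|.
\end{align*}
Both follow from the Duhamel formula \eqref{3.15}. The singularity $a(\tau)^{-1/(\alpha q)}$ is integrable uniformly for $\alpha\ge\alpha_->1/2$, and the difference of semigroups is controlled by Step 1.

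\textbf{Step 3: parametrized fixed point.} I will conjugate by $U(\alpha)$ to the fixed splitting $E_{\mathrm d}(\alpha_0)\oplus E_{\mathrm c}(\alpha_0)$. There I set up the Lyapunov--Perron operator on the space of sequences $(w_j)_{j\le0}$ with weight $e^{\mu|j|}$, for some $\mu\in(\mu_1,\mu_2)$, and prescribed $P$-component $x$ at $j=0$. Steps 1--2 make it a contraction uniformly in $\alpha$, and Lipschitz in $(\alpha,x)$. The fixed point therefore defines $\tilde g_\alpha$, and $g_\alpha=U(\alpha)\tilde g_\alpha U(\alpha)^{-1}$ has uniform Lipschitz constant $M$ and is Lipschitz in $\alpha$ in the sup norm over $B(0,r_0)$.

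The properties $C^1$, $g_\alpha(0)=0$, $Dg_\alpha(0)=0$, and local invariance for \eqref{nve} are inherited from Theorem \ref{invarint} and Corollary \ref{local invariant} for each fixed $\alpha$. This uses uniqueness: the fixed point coincides with the manifold constructed there.

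\textbf{Main obstacle.} I expect the main difficulty to be the Lipschitz estimate $\|\Lambda_\alpha-\Lambda_{\alpha'}\|\le C|\alpha-\alpha'|$ in the weighted norm, together with the loss of weight. Differentiating in $\alpha$ produces $|\xi|^{2\alpha}\log|\xi|$ in Fourier, and weights correspond to derivatives in Fourier variables, so some regularity is consumed. If the direct estimate fails, I would accept a slight weight loss $L^2(m\alpha_+)\to L^2(m\alpha_+-\delta)$. The fixed point would then be closed in the stronger norm, with continuity established in the weaker one, using the standard fiber-contraction argument for Lipschitz dependence on parameters with a loss of norm.
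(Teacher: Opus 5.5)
Your architecture is the same as the paper's. You fix the splitting via Kato's transformation functions, run a Lyapunov--Perron contraction uniformly in $\alpha\in I$, and conclude from $\|g_{\alpha_1}-g_{\alpha_2}\|\le\kappa\|g_{\alpha_1}-g_{\alpha_2}\|+C_0|\alpha_1-\alpha_2|$. Using the time-one map $\Lambda_\alpha+\mathscr R_\alpha$ is actually sounder than the paper's continuous-time version, which treats $(\mathbf v\cdot\nabla)w$ as a Lipschitz self-map of $L^2(m\alpha)$.

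\textbf{The gap.} Your proposal fails at exactly the step the paper only asserts (``the Lipschitz dependence of $\mathcal L_\alpha$ (hence of $S_d,S_c$)''). The bound $\|\Lambda_\alpha-\Lambda_{\alpha'}\|_{X\to X}\le C|\alpha-\alpha'|$ is false on every fixed weighted space. The main obstruction is not the factor $\log|k|$. It is the $\alpha$-dependent dilation in $\widehat{\Lambda_\alpha f}(k)=e^{1-1/\alpha}e^{-a(1)|k|^{2\alpha}}\hat f(ke^{-1/(2\alpha)})$, which comes from the drift $\frac{1}{2\alpha}\xi\cdot\nabla$ and is untouched by smoothing. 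Let $f$ be a fixed bump translated to $\xi_0$ with $|\xi_0|=R$ and normalized in $L^2(s)$. Up to algebraic tails, $\Lambda_\alpha f$ and $\Lambda_{\alpha'}f$ are bumps near $e^{-1/(2\alpha)}\xi_0$ and $e^{-1/(2\alpha')}\xi_0$, at distance $\sim R|\alpha-\alpha'|$. For $R\gg|\alpha-\alpha'|^{-1}$ their difference therefore has norm bounded below by a constant. So $\alpha\mapsto\Lambda_\alpha$ is not even norm-continuous. Everything you derive from Step 1 is then unjustified: the Lipschitz Riesz projections via the contour integral, the Lipschitz $U(\alpha)$, the bound on $\mathscr R_\alpha-\mathscr R_{\alpha'}$, and the parameter dependence of the Lyapunov--Perron map.

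\textbf{The fallback does not help.} With a weight loss $\delta<1$, the same example gives $\|(\Lambda_\alpha-\Lambda_{\alpha'})f\|_{L^2(s-\delta)}\approx R^{-\delta}\approx|\alpha-\alpha'|^{\delta}$ for $R\sim|\alpha-\alpha'|^{-1}$, which is only H\"older dependence. A full unit of loss fails differently. Lowering the weight exponent by one moves the edge $1-\frac{1+s}{2\alpha}$ of the continuous spectrum right by exactly one eigenvalue spacing $\frac{1}{2\alpha}$. The weak-norm contraction then loses its dichotomy unless $s>k+2$.

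\textbf{A fix within your scheme.} Rescale time, $\tau=2\alpha\sigma$. This multiplies the truncated vector field by the constant $2\alpha$, so orbits, and hence $W_{\mathrm d}$ (characterized by backward growth rates), are unchanged. The drift becomes the $\alpha$-independent $\xi\cdot\nabla$. The new time-one map is $\hat f\mapsto e^{2\alpha-2}e^{-(1-e^{-2\alpha})|k|^{2\alpha}}\hat f(k/e)$. Its $\alpha$-derivative is a fixed dilation followed by convolution with a kernel $\lesssim(1+|y|)^{-2-2\alpha}\log(2+|y|)$, which is bounded on $L^2(s)$ for $1<s<2\alpha_-+1$.

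\textbf{A second error: the weight has an upper bound.} The estimates do not ``only need the weight exponent to be at least $m\alpha_-$''. The semigroup $e^{\tau\mathcal L_\alpha}$ turns compactly supported data of nonzero mass into functions with tail $c|\xi|^{-2-2\alpha}$, so it acts on $L^2(s)$ only when $s<2\alpha+1$. Your choice $X=L^2(m\alpha_+)$ therefore requires $m\alpha_+<2\alpha_-+1$. This fails, for example, for $k=1$, $m=3$, $I=[0.7,0.95]$, even though every $L^2(3\alpha)$ with $\alpha\in I$ is admissible. Take instead a fixed exponent $s$ with $k+1<s<2\alpha_-+1$, e.g.\ $s=m\alpha_-$ whenever $(m-2)\alpha_-<1$. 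This window is empty for $k\ge2$; there $G_\alpha\notin L^2(m\alpha)$, so the splitting the statement refers to does not exist.
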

    \begin{proof}
        First, we recall the spectral information from Appendix B. The spectrum of the linear evolution operator $\Lambda_\alpha=e^{\mathcal L_\alpha}$ on $L^2(m\alpha)$ splits as a disjoint union $\sigma(\Lambda_\alpha)=\Sigma_d(\alpha)\cup\Sigma_{c}(\alpha)$, where
        \begin{align*}
            \Sigma_d(\alpha)=\Bigl\{e^{-\frac{n}{2\alpha}+1-\frac1\alpha}\ \Big|\ n=0,1,2,\dots\Bigr\},\qquad \Sigma_{c}(\alpha)\subset\Bigl\{\lambda\in\mathbb C\ \Big|\ |\lambda|\le e^{\frac{2\alpha-1-m\alpha}{2\alpha}}\Bigr\}.
        \end{align*}
        In particular, $\Lambda_\alpha$ admits at least $k+1$ simple eigenvalues $\lambda_0(\alpha),\dots,\lambda_k(\alpha)\in\Sigma_d(\alpha)$, which are isolated and separated from the continuous spectrum. Hence there exist constants $\mu_1<\mu_2<0$ with $\mu_2-\mu_1>0$ and $C\ge1$, independent of $\alpha\in (1/2,1)$, such that
        \begin{align*}
            |\lambda_j(\alpha)|\ge e^{\mu_2}\quad(j=0,\dots,k),\qquad |\lambda|\le e^{\mu_1}\quad(\lambda\in\Sigma_{c}(\alpha)).
        \end{align*}
        Let $P_k(\alpha)$ be the Riesz projection onto the finite-dimensional space $E_{\mathrm{d}}(\alpha)$ spanned by the eigenfunctions corresponding to $\lambda_0(\alpha),\dots,\lambda_k(\alpha)$, and set $Q_k(\alpha)=I-P_k(\alpha)$ with $E_{\mathrm{c}}(\alpha)=Q_k(\alpha)L^2(m\alpha)$. By the Riesz projection formula and spectral mapping, the semigroups $S_\alpha(\tau)=e^{\tau\mathcal L_\alpha}$ satisfy the uniform bounds, see Proposition \ref{spectral es},
        \begin{align}\label{eq:semigroup-fast}
            \|S_\alpha(\tau)Q_k(\alpha)\|_{\mathcal L(L^2(m\alpha))} \le C e^{\mu_1\tau},\qquad \tau\ge0, \\ \label{eq:semigroup-slow}
            \|S_\alpha(-\tau)P_k(\alpha)\|_{\mathcal L(L^2(m\alpha))} \le C e^{-\mu_2\tau},\qquad \tau\ge0.
        \end{align}
        Moreover, the eigenvalues $\lambda_0(\alpha),\dots,\lambda_k(\alpha)$ are simple and uniformly separated from the rest of $\sigma(\Lambda_\alpha)$ for $\alpha\in I$. Hence the corresponding spectral (Riesz) projections $P_k(\alpha)$ and $Q_k(\alpha)=I-P_k(\alpha)$ are well-defined and depend Lipschitz continuously on $\alpha\in I$ in the operator norm by standard perturbation theory.

        Next, by the structure of the Navier--Stokes nonlinearity $\mathcal N(w,\alpha):=(\mathbf v\cdot\nabla)w$ and the local well-posedness estimates for \eqref{nve} in Proposition \ref{decomposition}, we have: for each $\alpha\in(1/2,1)$, $\mathcal N(\cdot,\alpha):L^2(m\alpha)\to L^2(m\alpha)$ is $C^1$ near $0$ with $\mathcal N(0,\alpha)=0$ and $D_w\mathcal N(0,\alpha)=0$, and there exist $r_0>0$ and $L>0$, independent of $\alpha\in(1/2,1)$, such that
        \begin{equation}\label{eq:F-Lip-w-final}
            \|\mathcal N(z_1,\alpha)-\mathcal N(z_2,\alpha)\|_{m\alpha} \le L\|z_1-z_2\|_{m\alpha},
        \end{equation}
        whenever $\|z_i\|_{m\alpha}\le r_0$, $i=1,2$. In addition, since the nonlinearity does not depend on $\alpha$, $\mathcal N(w,\alpha_1)=\mathcal N(w,\alpha_2)$ for all $w$ and $\alpha_1,\alpha_2\in(1/2,1)$.

        To begin with, we will identify of a fixed phase space. By the Lipschitz dependence of $P_k(\alpha)$ and $Q_k(\alpha)$, we use the standard identification operators (see, e.g., \cite[Ch. 2, \S4.2]{TK 95}) to identify the parameter-dependent space $L^2(m\alpha)$ with a fixed Banach space $X$, and $E_{\mathrm d}(\alpha),E_{\mathrm c}(\alpha)$ with fixed subspaces $E_{\mathrm d},E_{\mathrm c}$ so that $X=E_{\mathrm d}\oplus E_{\mathrm c}$. Under this identification, \eqref{nve} becomes
        \begin{align*}
            \dot z = \mathcal{L}_{\alpha}z + \mathcal{N}(z,\alpha), \qquad z=(x,y)\in E_{\mathrm{d}}\oplus E_{\mathrm{c}},
        \end{align*}
        and \eqref{eq:semigroup-fast}--\eqref{eq:semigroup-slow} as well as \eqref{eq:F-Lip-w-final} remain valid (possibly with different constants), uniformly for $\alpha\in I$.

        In addition, we focus on graph space and graph transform. Choose $r_{0}>0$ sufficiently small so that
        \begin{align*}
            LC\int_0^{+\infty}  e^{(\mu_1-\mu_2)s}\mathrm{d}s \le \frac{1}{2},
        \end{align*}
        which is possible since $\mu_2-\mu_1>0$. Fix $\ell>0$ and define $B_{\mathrm{d}}(r_0)=\{x\in E_{\mathrm{d}}:\|x\|_{X}\le r_0\}$ and
        \begin{align*}
            \mathcal X =\{g:B_{\mathrm{d}}(r_0)\to E_{\mathrm{c}}:\ g(0)=0,\ \mathrm{Lip}(g)\le\ell\},\qquad \|g\|_{\mathcal X}:=\mathrm{Lip}(g).
        \end{align*}
        For $g\in\mathcal X$, $\alpha\in I$ and $\xi\in B_{\mathrm{d}}(r_0)$, let $z_g(\cdot;\xi,\alpha)=(x_g(\cdot;\xi,\alpha),y_g(\cdot;\xi,\alpha))$ be the unique mild solution on $(-\infty,0]$ satisfying the Lyapunov--Perron relations
        \begin{align*}
            x_g(t;\xi,\alpha) &= S_d(\alpha,t)\xi + \int_t^0 S_d(\alpha,t-s)\,\mathcal{N}_d(z_g(s;\xi,\alpha),\alpha)\mathrm{d}s,\\
            y_g(t;\xi,\alpha) &= -\int_{-\infty}^t S_c(\alpha,t-s)\, \mathcal{N}_c(z_g(s;\xi,\alpha),\alpha)\mathrm{d}s,\qquad t\le0,
        \end{align*}
        where $S_d$ is the restriction to $E_d$ and $S_c$ is the semigroup on $E_c$ for $t\ge0$. Standard estimates using \eqref{eq:semigroup-fast}--\eqref{eq:semigroup-slow} and \eqref{eq:F-Lip-w-final} give $\|z_g(t;\xi,\alpha)\|_{X}\le r_0$ for all $t\le0$ when $\|\xi\|_{X}\le r_0$, uniformly in $\alpha\in I$. Define the graph transform $\mathcal T_\alpha:\mathcal X\to\mathcal X$ by
        \begin{align*}
            (\mathcal T_\alpha g)(\xi) := y_g(0;\xi,\alpha)= -\int_{-\infty}^0 S_c(\alpha,-s)\,\mathcal{N}_c(z_g(s;\xi,\alpha),\alpha)\mathrm{d}s.
        \end{align*}

        Furthermore, we will show that uniform contraction of $\mathcal{T}_{\alpha}$. Fix $g\in\mathcal X$ and $\xi_1,\xi_2\in B_{\mathrm{d}}(r_0)$. For $t\le0$, $u(t):=\|z_g(t;\xi_1,\alpha)-z_g(t;\xi_2,\alpha)\|_{X}$ satisfies
        \begin{align*}
            u(t)\le C e^{\mu_2 t}\|\xi_1-\xi_2\|_{X} + CL\int_t^0 e^{\mu_2(t-s)} u(s)\mathrm{d}s.
        \end{align*}
        By Gronwall's lemma,
        \begin{align*}
            \|z_g(t;\xi_1,\alpha)-z_g(t;\xi_2,\alpha)\|_{X} \le C e^{\mu_2 t}\|\xi_1-\xi_2\|_{X},\qquad t\le0.
        \end{align*}
        Substituting into the definition of $\mathcal T_\alpha$ and using \eqref{eq:semigroup-fast},
        \begin{align*}
            \|\mathcal T_\alpha g(\xi_1)-\mathcal T_\alpha g(\xi_2)\|_{X} \le C^2 L\int_0^{+\infty} e^{(\mu_1-\mu_2)s}\mathrm{d}s \ \|\xi_1-\xi_2\|_{X} \le \ell\|\xi_1-\xi_2\|_{X}.
        \end{align*}
        Thus $\mathcal T_\alpha g\in\mathcal X$ and $\|\mathcal T_\alpha g\|_{\mathcal X}\le \ell$. Similarly, for $g_1,g_2\in\mathcal X$ one obtains
        \begin{align*}
            \|\mathcal T_\alpha g_1-\mathcal T_\alpha g_2\|_{\mathcal X}\le \kappa\|g_1-g_2\|_{\mathcal X}
        \end{align*}
        for some $\kappa\in(0,1)$ independent of $\alpha\in I$. Hence each $\mathcal T_\alpha$ is a uniform contraction on $\mathcal X$.

        Moreover, we prove that dependence of $\mathcal{T}_{\alpha}$ on $\alpha$. Fix $g\in\mathcal X$ and $\alpha_1,\alpha_2\in I$, and set $z_i(t)=z_g(t;\xi,\alpha_i)$. Since $\mathcal N$ is independent of $\alpha$, only the linear parts vary with $\alpha$. Using the Lipschitz dependence of $\mathcal L_\alpha$ (hence of $S_d,S_c$) and the uniform bounds above, one obtains for $t\le0$ an inequality of the form
        \begin{equation}\label{z1-z2}
            \|z_1(t)-z_2(t)\|_{X}\le C|\alpha_1-\alpha_2|\int_{-\infty}^0 e^{\mu_1(t-s)}\|z_2(s)\|_{X}\mathrm{d}s + CL\int_t^0 e^{\mu_1(t-s)}\|z_1(s)-z_2(s)\|_{X}\mathrm{d}s.
        \end{equation}
        Since $\|z_2(s)\|_{X}\le r_0$ for $s\le0$, another application of Gronwall yields
        \begin{align*}
            \|z_1(t)-z_2(t)\|_{X}\le C'|\alpha_1-\alpha_2|e^{\mu_1 t},\qquad t\le0,
        \end{align*}
        with $C'$ independent of $\alpha$. Substituting into the definition of $\mathcal T_{\alpha_i}g$ and using again \eqref{eq:semigroup-fast}, we obtain
        \begin{align*}
            \|\mathcal T_{\alpha_1}g(\xi)-\mathcal T_{\alpha_2}g(\xi)\|_{X} \le C_0|\alpha_1-\alpha_2|\,\|\xi\|_{X},
        \end{align*}
        hence $\|\mathcal T_{\alpha_1}g-\mathcal T_{\alpha_2}g\|_{\mathcal X}\le C_0|\alpha_1-\alpha_2|$.

        Finally, we will study the fixed points and Lipschitz dependence of $g_{\alpha}$. For each $\alpha\in I$ there exists a unique fixed point $g_\alpha\in\mathcal X$ such that $\mathcal T_\alpha g_\alpha=g_\alpha$. The Lyapunov--Perron construction implies that the graph of $g_\alpha$ is a local invariant slow manifold of \eqref{nve}, and that $g_\alpha$ is $C^1$ with $g_\alpha(0)=0$ and $Dg_\alpha(0)=0$ (see, e.g., \cite{DH 81,PC 89}). Finally, for $\alpha_1,\alpha_2\in I$,
        \begin{align*}
            g_{\alpha_1}-g_{\alpha_2} = (\mathcal T_{\alpha_1}g_{\alpha_1}-\mathcal T_{\alpha_1}g_{\alpha_2}) +(\mathcal T_{\alpha_1}g_{\alpha_2}-\mathcal T_{\alpha_2}g_{\alpha_2}),
        \end{align*}
        so taking $\|\cdot\|_{\mathcal X}$ and using the contraction and the Lipschitz estimate in $\alpha$ gives
        \begin{align*}
            \|g_{\alpha_1}-g_{\alpha_2}\|_{\mathcal X} \le \kappa\|g_{\alpha_1}-g_{\alpha_2}\|_{\mathcal X}+ C_0|\alpha_1-\alpha_2|.
        \end{align*}
        Rearranging yields
        \begin{align*}
            \|g_{\alpha_1}-g_{\alpha_2}\|_{\mathcal X}\le \frac{C_0}{1-\kappa}\,|\alpha_1-\alpha_2|,
        \end{align*}
        which completes the proof.
    \end{proof}

    \begin{remark}
        To rigorously analyze the Lipschitz dependence of $g_\alpha$ on $\alpha$, we identify the parameter-dependent spaces $L^2(m\alpha)$ with a fixed Banach space $X$, exploiting the uniform equivalence of the weighted norms for $\alpha\in I$. Under this identification, the spectral projections $P_k(\alpha)$ vary Lipschitz continuously, allowing us to treat $E_{\mathrm{d}}(\alpha)$ and $E_{\mathrm{c}}(\alpha)$ as subspaces of $X$. The graph map $g_\alpha$ is then constructed via the backward Lyapunov--Perron integral, which enforces the boundedness condition needed to slave the fast variables to the slow ones ($y=g_\alpha(x)$).
    \end{remark}

    \begin{remark}
        A crucial consequence of the Lyapunov--Perron construction is the continuous dependence of the invariant manifold on the parameter $\alpha$. Since the spectral gap ensures that the dimension of the slow subspace remains constant ($k+1$), the manifold $W_{\mathrm{d}}^{\mathrm{loc}}(\alpha)$ undergoes a continuous deformation without bifurcation as $\alpha$ varies. In particular, in the limit $\alpha\to 1^{-}$, it converges to the classical Navier--Stokes slow manifold (see, e.g., \cite{TC 02}). This implies that the fractional slow manifold is a robust perturbation of the classical one: its geometry adjusts continuously with $\alpha$, while its dynamical role as the center of attraction remains unchanged.
    \end{remark}

    \section{The long-time asymptotics of solution}

    \subsection{The eigenfunction of operator $\mathcal{L}_{\alpha}$}
    In this subsection we refine the relation between the vorticity $w(\xi)$ and the associated velocity field $\mathbf v(\xi)$ in self-similar variables. We recover $\mathbf v$ from $w$ via the Biot--Savart law \eqref{nbs} and derive quantitative decay estimates for $\mathbf v(\xi)$ as $|\xi|\to\infty$, which will be used in the next subsection to transfer vorticity information to solutions of the fractional Navier--Stokes equation.

    We begin with the velocity field generated by the principal eigenfunction of the linearized operator $\mathcal L_\alpha$. The first eigenvalue is $\lambda_{0}=1-\frac1\alpha$, with the associated eigenfunction given by the fractional Oseen-type vortex $G_\alpha=\mathcal F^{-1}(e^{-|p|^{2\alpha}})$. Using the Biot--Savart law $\mathbf v=\nabla^\perp\Delta^{-1}w$ and exploiting the radial symmetry of $G_\alpha$, the corresponding velocity field $\mathbf{v}^{G_{\alpha}}$ can be explicitly computed (either via Fourier transform involving Bessel functions or directly via the circulation distribution). The result is given by
    \begin{align}\label{G-expre}
    \mathbf{v}^{G_{\alpha}}(x)=\frac{x^{\perp}}{|x|^{2}}\Phi(|x|),\qquad \text{with}\quad \Phi(r)=\int_{0}^{r}sG_{\alpha}(s)\mathrm{d}s.
    \end{align}
    Here, $\Phi(r)$ represents the partial circulation of the vortex. Finally, using the asymptotic decay of $G_\alpha$, we obtain the following far-field behavior:
    \begin{align*}
    \mathbf{v}^{G_{\alpha}}(x)=\frac{1}{2\pi}\frac{x^{\perp}}{|x|^{2}}+O(|x|^{-1-2\alpha}) \quad \text{as} \quad |x|\to \infty.
    \end{align*}

    The second eigenvalue $\lambda_{1}=\frac{2\alpha-3}{2\alpha}$ has multiplicity two, with eigenfunctions $F_{i,\alpha}=\partial_{i}G_{\alpha}$ for $i=1,2$. Due to the linearity of the Biot--Savart law, the associated velocity fields are given by $\mathbf{v}^{F_{i,\alpha}}=\partial_{i}\mathbf{v}^{G_{\alpha}}$. A direct computation using \eqref{G-expre} yields:
    \begin{align*}
        \mathbf{v}^{F_{i,\alpha}}(x) = \bigg(\frac{\Phi}{r^{2}}\bigg)'\frac{x_{i}}{r}x^{\perp} + \frac{\Phi}{r^{2}}\partial_{i}(x^{\perp}), \quad \text{where } \partial_1 x^\perp = e_2, \ \partial_2 x^\perp = -e_1.
    \end{align*}
    Using the asymptotic expansion of $\Phi(r)$, we derive the far-field behavior for $i=1,2$:
    \begin{align*}
        \begin{cases}
            \textbf{v}^{F_{1,\alpha}}=\frac{1}{2\pi |x|^{4}}(2x_{1}x_{2}, x^{2}_{2}-x^{2}_{1})+O(|x|^{-2-2\alpha}),\\
            \textbf{v}^{F_{2,\alpha}}=\frac{1}{2\pi|x|^{4}}(x^{2}_{2}-x^{2}_{1}, -2x_{1}x_{2})+O(|x|^{-2-2\alpha}).
        \end{cases}
    \end{align*}

    The third eigenvalue $\lambda_{2}=\frac{\alpha-2}{\alpha}$ has multiplicity three. A convenient basis of eigenfunction
    \begin{align*}
        H_{1,\alpha}(\xi)&=\Delta G_{\alpha}(\xi),\\
        H_{2,\alpha}(\xi)&=(\partial_{1}^{2}-\partial_{2}^{2})G_{\alpha}(\xi),\\
        H_{3,\alpha}(\xi)&=\partial_{1}\partial_{2}G_{\alpha}(xi).
    \end{align*}
    and the corresponding velocity fields are given by
    \begin{align*}
        \textbf{v}^{H_{1,\alpha}}(\xi)&=\Delta \textbf{v}^{G_{\alpha}}(\xi),\\
        \textbf{v}^{H_{2,\alpha}}(\xi)&=(\partial_{1}^{2}-\partial_{2}^{2})\textbf{v}^{G_{\alpha}}(\xi),\\
        \textbf{v}^{H_{3,\alpha}}(\xi)&=\partial_{1}\partial_{2}\textbf{v}^{G_{\alpha}}(\xi).
    \end{align*}

    \subsection{Stability of the self-similar fractional vortex $G_\alpha$}
    In the classical case $\alpha=1$, the Oseen vortex is an explicit self-similar solution describing the diffusion of a point vortex. For $0<\alpha<1$, the fractional heat kernel $G_\alpha$ provides a natural analogue (a "fractional Oseen-type vortex") for radially symmetric diffusive evolution. To the best of our knowledge, however, a complete dynamical theory for such fractional vortices (e.g. stability or invariant manifolds) is not yet available. In this work, $G_\alpha$ is used only as an eigenfunction of the linearized operator $\mathcal L_\alpha$.

    We study small solutions of \eqref{nve} in the weighted space $L^{2}(m\alpha)$ with $m=2$ and $1/2<\alpha<1$. On $L^{2}(2\alpha)$, $\mathcal L_\alpha$ has a simple eigenvalue $\lambda_{0}=1-\frac{1}{\alpha}$ with associated eigenfunction $G_\alpha$; recall that $G_{\alpha}=\mathcal{F}^{-1}(e^{-|p|^{2\alpha}})$. Let $\mathbf{v}^{G_{\alpha}}(\xi)$ be the velocity field obtained from $G_{\alpha}$ by \eqref{nbs}. Observing that $G_{\alpha}$ is a radial function and Biot-Savart law is independent of $\alpha$, we thus conclude that the resulting velocity field $\textbf{v}^{G_{\alpha}}$ must be a purely azimuthal term, which is orthogonal to the radial gradient.
    \begin{align}\label{orthogonal}
        \textbf{v}^{G_{\alpha}}\cdot\nabla G_{\alpha}=0,\qquad \xi\in \mathbb{R}^{2}.
    \end{align}
    Indeed, $\textbf{v}(\xi)=K\ast w(\xi)$ with $K(z)=\frac{1}{2\pi}\cdot\frac{z^{\perp}}{|z|^{2}}$, let $\tilde{\psi}$ be stream function. $\Delta\tilde{\psi}=w$, $\textbf{v}=\nabla^{\perp}\tilde{\psi}:=(-\partial_{2}\tilde{\psi},\partial_{1}\tilde{\psi})$. If $w$ be a radial function, then $\tilde{\psi}$ is also radial function. Under the polar coordinate, we have $\Delta \tilde{\psi}=\tilde{\psi}_{rr}+\frac{1}{r}\tilde{\psi}_{r}=w(r)$. Multiplying  both sides by $r$ and integrating, we obtain 
    \begin{align*}
        (r\tilde{\psi}_{r})'=rw(r)\Rightarrow r\tilde{\psi}_{r}(r)=\int_{0}^{r}sw(s)\mathrm{d}s.
    \end{align*}
    Hence, $\tilde{\psi}_{r}(r)=\frac{1}{r}\int_{0}^{r}sw(s)\mathrm{d}s$. Noting the polar coordinate basis vectors $e_{r}=\xi/|\xi|$ and $e_{\theta}=\xi^{\perp}/|\xi|$, we thus get
    \begin{align*}
        \textbf{v}(\xi)=\nabla^{\perp} \tilde{\psi}(\xi)=\tilde{\psi}_{r}(r)e_{\theta}=\frac{e_{\theta}}{r}\int_{0}^{r}sw(s)\mathrm{d}s.
    \end{align*}
    On the other hand, we have $\nabla w(\xi)=w'(r)e_{r}=w'(r)\frac{\xi}{|\xi|}$. Due to $e_{\theta}\perp e_{r}$, we have $\textbf{v}(\xi)\cdot\nabla w(\xi)=\tilde{\psi}_{r}(r)e_{\theta}w'(r)e_{r}=0$. Hence, we obtain \eqref{orthogonal}. However, it should be emphasized that, unlike the integer-order case, $w(\xi)=\beta G_{\alpha}$ is not a steady-state solution, since the corresponding eigenvalue is not zero. 

    Let $E_{\text{d}}=\text{span}\{G_{\alpha}\}$ and denote by $E_c$ the spectral subspace of $\mathcal L_{\alpha}$ associated with the continuous spectrum
    \begin{align*}
        \sigma_{c} = \{\lambda \in \mathbb C \mid \Re \lambda \le -\tfrac{1}{2}\}.
    \end{align*}
    Let $W_d^{\mathrm{loc}}$ be the local slow manifold given by Corollary \ref{local invariant} (with $k=0$ and $m=2$). We claim that
    \begin{align}\label{2-Wc-loc}
        W_{\text{d}}^{\mathrm{loc}}
        = \bigl\{\beta G_{\alpha} \;\big|\; \beta \in \mathbb R,\; |\beta|\,\|G_{\alpha}\|_{2\alpha} < r_0\bigr\}.
    \end{align}
    Indeed, if $|\beta_0|\,\|G_\alpha\|_{2\alpha}<r_0$ and we define $\beta(\tau):=\beta_0 e^{\lambda_0\tau}$, then $w(\xi,\tau):=\beta(\tau)\,G_\alpha(\xi)$ solves \eqref{nve}. Moreover, $\|w(\tau)\|_{2\alpha}=|\beta_0|e^{\lambda_0\tau}\|G_\alpha\|_{2\alpha}<r_0$ for all $\tau\ge 0$.  Hence, by Corollary \ref{local invariant} we have $w(\tau) \in W_{\text{d}}^{\mathrm{loc}}$. On the other hand, the description of the local slow manifold implies that $ W_{\text{d}}^{\mathrm{loc}} \subset \{\beta G_{\alpha} + g(\beta G_{\alpha}) \mid \beta \in \mathbb R\}$ for some mapping $g : E_{\text{d}} \to E_{\text{c}}$. Combining these two facts forces $g \equiv 0$, and therefore \eqref{2-Wc-loc} holds. In this particular situation the local slow manifold is thus unique. Applying Corollary \ref{local invariant}, we obtain:
    \begin{proposition}\label{2alpha}
        Fix $1/\alpha-1<\mu<\frac{1}{2\alpha}$. There exist positive constants $r_{2}$ and $C$ such that, for any initial data $w_{0}$ with $\|w_{0}\|_{2\alpha}\leq r_{2}$, the solution $w(\cdot,\tau)$ of \eqref{nve} satisfies:
        \begin{align}\label{2-loc}
            \|w(\cdot,\tau)-Ae^{\lambda_{0}\tau}G_{\alpha}\|_{2\alpha}\leq Ce^{-\mu\tau},\qquad \tau\geq 0,
        \end{align}
        where $A=\int_{\mathbb{R}^{2}}w_{0}(\xi)\mathrm{d}\xi$.
    \end{proposition}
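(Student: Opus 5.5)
The plan is to apply Corollary \ref{local invariant} with $k=0$, $m=2$, and then identify explicitly the point $w_0^\ast$ on the slow manifold that shadows $w_0$. Since $1/2<\alpha<1$ we have $m\alpha=2\alpha>1=k+1$, so the spectral gap hypothesis holds. Next we choose $\mu_1<\mu_2$ with $\frac1\alpha-1<\mu_1<\mu_2$ and $\mu<\mu_2<\min\{\frac{1}{2\alpha}+\frac1\alpha-1,\frac{1}{2\alpha}\}$. Because $\alpha<1$, the second entry of the minimum is the smaller one, so it equals $\frac{1}{2\alpha}$, which is exactly the upper bound on $\mu$ in the statement. The corollary then supplies $r_2>0$ and $C>0$ with the following property. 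For $\|w_0\|_{2\alpha}\le r_2$ there is a unique $w_0^\ast\in W_{\rm d}^{\rm loc}$ with
\begin{align*}
\|\Phi_\tau^{2\alpha}(w_0)-\Phi_\tau^{2\alpha}(w_0^\ast)\|_{2\alpha}\le Ce^{-\mu\tau},\qquad \tau\ge0 .
\end{align*}
By \eqref{2-Wc-loc} we can write $w_0^\ast=\beta_0G_\alpha$. The explicit computation preceding the proposition gives $\Phi_\tau^{2\alpha}(\beta_0G_\alpha)=\beta_0e^{\lambda_0\tau}G_\alpha$, because $\mathbf v^{G_\alpha}\cdot\nabla G_\alpha=0$ by \eqref{orthogonal}. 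Hence \eqref{2-loc} holds with $A=\beta_0$, and it remains to prove that $\beta_0=\int w_0\,\mathrm d\xi$.

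To identify $\beta_0$ we use the mass functional $M(w)=\int_{\mathbb R^2}w\,\mathrm d\xi$. It is continuous on $L^2(2\alpha)$ because $L^2(2\alpha)\hookrightarrow L^1$ when $2\alpha>1$. We compute its evolution along \eqref{nve}. The nonlinear term $\nabla\cdot(\mathbf v w)$ integrates to zero. The term $(-\Delta)^\alpha w$ also integrates to zero, since its Fourier transform vanishes at $p=0$. Finally $\int(\frac1{2\alpha}\xi\cdot\nabla w+w)=(1-\frac1\alpha)\int w$. We therefore expect $M(w(\tau))=e^{\lambda_0\tau}M(w_0)$, which is consistent with the remark after \eqref{eq:Lp-w} that mass decays like $e^{(1-1/\alpha)\tau}$; equivalently, in the original variables mass is conserved by Theorem \ref{gwp}. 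We normalize $\int G_\alpha=\widehat{G_\alpha}(0)=1$. Then the difference $d(\tau)=w(\tau)-\beta_0e^{\lambda_0\tau}G_\alpha$ satisfies $M(d(\tau))=e^{\lambda_0\tau}(M(w_0)-\beta_0)$. On the other hand, $|M(d(\tau))|\le C\|d(\tau)\|_{2\alpha}\le Ce^{-\mu\tau}$. Since $-\mu<\lambda_0$, letting $\tau\to\infty$ forces $M(w_0)=\beta_0$.

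The step I expect to need the most care is justifying the mass identity rigorously in the weighted space. Specifically, we must show that $\int(-\Delta)^\alpha w=0$ and that the boundary terms in $\int\xi\cdot\nabla w$ vanish for mild solutions in $C^0([0,\infty),L^2(2\alpha))$, where $w$ decays only like $|\xi|^{-2\alpha-1}$ in an $L^2$ sense. I would first argue for smooth, rapidly decaying data and then pass to general $w_0$ by density, using the continuous dependence from Theorem \ref{global} together with the $L^1$ embedding. Alternatively, one can transfer the identity directly from the exact mass conservation in Theorem \ref{gwp} through the change of variables \eqref{w-omega}, which gives $\int w(\xi,\tau)\,\mathrm d\xi=e^{\tau}e^{-\tau/\alpha}\int\omega_0$. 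A second point to check is that the solution stays in the ball where the truncated semiflow coincides with the original one. This is already built into Corollary \ref{local invariant} through the choice of $r_2\le r_1$.
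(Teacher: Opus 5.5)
Your proposal is correct and follows essentially the same route as the paper. You apply Corollary \ref{local invariant} with $k=0$, $m=2$, use that the local slow manifold is the line $\{\beta G_\alpha\}$ on which the flow is exactly $\beta_0 e^{\lambda_0\tau}G_\alpha$, and identify $\beta_0$ through the mass identity $\int w(\tau)\,\mathrm{d}\xi=e^{\lambda_0\tau}\int w_0\,\mathrm{d}\xi$. Your explicit limiting step, $|\int w_0\,\mathrm{d}\xi-\beta_0|\le Ce^{-(\mu+\lambda_0)\tau}\to 0$ because $\mu>\frac1\alpha-1=-\lambda_0$, is exactly what the paper's brief ``integrating the asymptotic profile'' step relies on.
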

    \begin{proof}
        If $r_{2}>0$ is sufficiently small, the invariant manifold estimates \eqref{loc-invar-estimate} and \eqref{2-Wc-loc} imply that \eqref{2-loc} holds for some constant $A\in \mathbb{R}$. It remains to identify $A$ with the total mass of the initial data. Let $\beta(\tau)=\int_{\mathbb{R}^{2}}w(\xi,\tau)\mathrm{d}\xi$ and integrate the vorticity equation \eqref{nve} over $\mathbb{R}^2$, we obtain
        \begin{align*}
            \beta'(\tau) = -\int_{\mathbb{R}^{2}}(-\Delta)^{\alpha}w\,\mathrm{d}\xi + \frac{1}{2\alpha}\int_{\mathbb{R}^{2}}\xi\cdot\nabla w\,\mathrm{d}\xi + \int_{\mathbb{R}^{2}}w\,\mathrm{d}\xi - \int_{\mathbb{R}^{2}}\nabla\cdot(\mathbf{v}w)\,\mathrm{d}\xi.
        \end{align*}
        For the first term vanishes (since $\widehat{(-\Delta)^{\alpha}w}(0)=0$). For the nonlinear term, the divergence theorem and the decay of solutions imply $\int_{\mathbb{R}^{2}}\nabla\cdot(\mathbf{v}w)\,\mathrm{d}\xi = 0$. For the second term, integration by parts yields
        \begin{align*}
            \frac{1}{2\alpha}\int_{\mathbb{R}^{2}}\xi\cdot\nabla w\,\mathrm{d}\xi = -\frac{1}{2\alpha}\int_{\mathbb{R}^{2}}(\nabla\cdot\xi) w\,\mathrm{d}\xi = -\frac{1}{\alpha}\int_{\mathbb{R}^{2}}w\,\mathrm{d}\xi,
        \end{align*}
        where we used $\nabla\cdot\xi = 2$ in $\mathbb{R}^2$. Combining these results, we arrive at the ODE:
        \begin{align}\label{beta}
            \beta'(\tau) = \left(1-\frac{1}{\alpha}\right)\beta(\tau) = \lambda_{0}\beta(\tau).
        \end{align}
        Solving this gives $\beta(\tau)=e^{\lambda_{0}\tau}\beta(0) = e^{\lambda_{0}\tau}\int_{\mathbb{R}^{2}}w_{0}\,\mathrm{d}\xi$. On the other hand, integrating the asymptotic profile in \eqref{2-loc} and noting that $\int_{\mathbb{R}^{2}}G_{\alpha}\,\mathrm{d}\xi=1$, we conclude that the coefficient $A$ must satisfy $A=\int_{\mathbb{R}^{2}}w_{0}\,\mathrm{d}\xi$.
    \end{proof}
    Next, we revert to unscaled variables $(x,t)$. Let 
    \begin{align*}
        \Omega(x,t)=\frac{1}{1+t}G_{\alpha}\bigg(\frac{x}{(1+t)^{1/2\alpha}}\bigg),\quad \textbf{u}^{\Omega}(x,t)=\frac{1}{(1+t)^{1-1/2\alpha}}\textbf{v}^{G_{\alpha}}\bigg(\frac{x}{(1+t)^{1/2\alpha}}\bigg).
    \end{align*}
    Thus $\Omega$ is the solution of \eqref{ve} corresponding, via the change of variable \eqref{w-omega}, to the solution $G_{\alpha}$ of \eqref{nve}, and $\textbf{u}^{\Omega}$ is the associated velocity field. From Proposition \ref{2alpha}, we obtain:
    \begin{corollary}\label{2alpha-coro}
        Fix $1/\alpha-1<\mu<\frac{1}{2\alpha}$. There exists $r_{2}>0$ such that, for all initial  data $w_{0}\in L^{2}(2\alpha)$ with $\|w_{0}\|_{2\alpha}\leq r_{2}$, the solution of \eqref{ve} satisfies
        \begin{align}\label{4.5}
            |\omega-A(1+t)^{\lambda_{0}}\Omega(\cdot,t)|_{p}\leq \frac{C_{p}}{(1+t)^{1+\mu-\frac{1}{p\alpha}}}, \quad 1\leq p\leq 2,\quad t\geq 0,
        \end{align}
        where $A=\int_{\mathbb{R}^{2}}\omega_{0}(x)\mathrm{d}x$. If $\textbf{u}(x,t)$ is the velocity field obtained from $\omega(x,t)$ via the Biot-Savart law \eqref{B-S}, then
        \begin{align}\label{4.6}
            |\textbf{u}(\cdot,t)-A(1+t)^{\lambda_{0}}\mathbf{v}^{\Omega}(\cdot,t)|_{q}\leq \frac{C_{q}}{(1+t)^{1-\frac{1}{2\alpha}+\mu-\frac{1}{q\alpha}}}, \quad 1<q<\infty,\quad t\geq 0.
        \end{align}
    \end{corollary}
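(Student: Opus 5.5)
The plan is to derive Corollary \ref{2alpha-coro} from Proposition \ref{2alpha} by undoing the self-similar change of variables \eqref{w-omega}--\eqref{v-u}. First I will match the initial data: since $\omega_0(x)=w_0(x)$ at $t=0$, the smallness $\|w_0\|_{2\alpha}\le r_2$ is exactly the hypothesis of Proposition \ref{2alpha}, and $A=\int w_0\,\mathrm{d}\xi=\int\omega_0\,\mathrm{d}x$. Writing $\tau=\log(1+t)$, we have $w(\xi,\tau)-Ae^{\lambda_0\tau}G_\alpha(\xi)$ corresponding to $(1+t)\big(\omega-A(1+t)^{\lambda_0}\Omega\big)$ evaluated at $x=(1+t)^{1/2\alpha}\xi$. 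Proposition \ref{2alpha} then gives the bound $\|w(\tau)-Ae^{\lambda_0\tau}G_\alpha\|_{2\alpha}\le C(1+t)^{-\mu}$.

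For \eqref{4.5}, I will use the embedding $L^2(2\alpha)\hookrightarrow L^p(\mathbb R^2)$ for $1\le p\le 2$. By Proposition \ref{prop:L2mq_to_Lq}, this holds because $2\alpha>1$: by H\"older, $|f|_p\le |b^{-2\alpha}|_{2p/(2-p)}\|f\|_{2\alpha}$, and $b^{-2\alpha}\in L^{2p/(2-p)}$ requires $4\alpha p/(2-p)>2$, which holds for all $p\ge1$ when $\alpha>1/2$. The change of variables gives $|f((1+t)^{-1/2\alpha}\cdot)|_p=(1+t)^{1/(p\alpha)}|f|_p$. Hence
\[
|\omega-A(1+t)^{\lambda_0}\Omega|_p=(1+t)^{-1+\frac{1}{p\alpha}}\,|w(\tau)-Ae^{\lambda_0\tau}G_\alpha|_p\le C_p(1+t)^{-1-\mu+\frac{1}{p\alpha}},
\]
which is \eqref{4.5}. (The factor $(1+t)^{\lambda_0}$ in the statement is read as arising from $e^{\lambda_0\tau}$ in the rescaled profile.)

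For \eqref{4.6}, the velocity difference is the Biot--Savart image of the vorticity difference, by linearity of \eqref{nbs}. In rescaled variables, $\mathbf v-Ae^{\lambda_0\tau}\mathbf v^{G_\alpha}$ is generated by $h:=w-Ae^{\lambda_0\tau}G_\alpha$. I will apply Lemma \ref{sol-e}(a): for $2<q<\infty$, choose $p\in(1,2)$ with $1/q=1/p-1/2$, so that $|\mathbf v^h|_q\le C|h|_p\le C\|h\|_{2\alpha}$. Undoing the scaling in \eqref{v-u} multiplies by $(1+t)^{-(1-\frac{1}{2\alpha})+\frac{1}{q\alpha}}$, which produces the exponent in \eqref{4.6}.

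The main obstacle is the range $1<q\le 2$, where Lemma \ref{sol-e}(a) does not apply; indeed the velocity of a nonzero-mass vorticity is not even in $L^2$. There I would first check whether $h$ has zero mean. By \eqref{beta}, $\int h\,\mathrm{d}\xi=\beta(\tau)-Ae^{\lambda_0\tau}=0$, so $h$ has zero mass. I would then use a weighted estimate for zero-mean vorticity: $|\mathbf v^h|_q\le C\|h\|_{2\alpha}$ for $1<q\le2$. This follows by splitting the Biot--Savart kernel and exploiting the vanishing moment, which gives velocity decay like $|\xi|^{-2}$ at infinity (valid since $2\alpha>1$ controls the first moment of $h$ in $L^1$). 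If this estimate only holds for $q$ in a subrange, I would restrict \eqref{4.6} accordingly; interpolation in $q$ then completes the argument.
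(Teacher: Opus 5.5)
Your proof of \eqref{4.5}, and of \eqref{4.6} for $2<q<\infty$, is correct and is the paper's argument: undo the scaling, use $L^2(2\alpha)\hookrightarrow L^p$ for $1\le p\le 2$, then apply Lemma~\ref{sol-e}(a). Your remark that $h=w(\tau)-Ae^{\lambda_0\tau}G_\alpha$ has zero mass is also correct, and the paper's appeal to \cite[Proposition B.1]{TC 02} tacitly needs it.

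The gap is the range $1<q\le 2$. Your claim $|\mathbf v^h|_q\le C\|h\|_{2\alpha}$ for all such $q$ is false, and its justification is wrong. $L^2(2\alpha)$ controls $\int|h|$, since $\int b^{-4\alpha}\,\mathrm{d}\xi<\infty$ iff $\alpha>1/2$. It does not control the first moment: $\int|\xi||h|\,\mathrm{d}\xi\le\|h\|_{2\alpha}\bigl(\int|\xi|^2b^{-4\alpha}\,\mathrm{d}\xi\bigr)^{1/2}$ is sharp, and the last integral is finite only when $\alpha>1$. So there is no $|\xi|^{-2}$ decay of the velocity.

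Concretely, let $h$ be radial with $\int h=0$ and $h(\xi)=|\xi|^{-2\alpha-1-\delta}$ for $|\xi|\ge 1$, $\delta>0$. Then $h\in L^2(2\alpha)$. However, $\mathbf v^h(\xi)=-\frac{\xi^\perp}{|\xi|^2}\int_{|\xi|}^\infty s\,h(s)\,\mathrm{d}s$ satisfies $|\mathbf v^h(\xi)|=c\,|\xi|^{-2\alpha-\delta}$ for $|\xi|\ge1$, which is not in $L^q$ when $q(2\alpha+\delta)\le 2$. Taking $w_0=\varepsilon h$ (so $A=0$), \eqref{4.6} already fails at $t=0$ for every $q<1/\alpha$. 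A logarithmically corrected tail $|\xi|^{-2\alpha-1}(\log|\xi|)^{-\gamma}$ with $\frac12<\gamma\le\alpha$ breaks $q=1/\alpha$ as well. Your fallback of restricting to a subrange and interpolating cannot repair this, because interpolation only fills in between exponents you already control.

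What can be proved is \eqref{4.6} for $1/\alpha<q<\infty$. For zero-mean vorticity and $1<m\alpha<2$ one has the weighted Biot--Savart bound $|b^{m\alpha-\frac12}\mathbf v^h|_4\le C\|h\|_{m\alpha}$; this is the estimate the paper takes from \cite[Proposition B.1]{TC 02}. H\"older then gives $|\mathbf v^h|_q\le C|b^{m\alpha-\frac12}\mathbf v^h|_4$ exactly when $m\alpha>2/q$. Since only $\|h\|_{2\alpha}$ is controlled, you need $m\alpha\le 2\alpha$, which is possible iff $q>1/\alpha$ (take $m\alpha=2\alpha$).

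The paper's own proof has the same defect. It fixes $m\alpha\in(2/q,2)$ and then uses $|b^{m\alpha}\tilde w|_2\le\|\tilde w\|_{2\alpha}$, which holds only for $m\alpha\le 2\alpha$. This is the Gallay--Wayne argument, valid there because the data lie in $L^2(2)$, transplanted to $L^2(2\alpha)$, where the weight is too weak for $q\le 1/\alpha$. So the statement should read $1/\alpha<q<\infty$ in \eqref{4.6}. Your argument proves exactly that once the first-moment heuristic is replaced by the zero-mean weighted estimate.
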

    \begin{proof}
        Let $\omega(x,t)$ be the solution of \eqref{ve} with $\omega(\cdot,0)=\omega_{0}$, and let $w(\xi,\tau)$ be the solution of \eqref{nve} with the same initial data. If $1\leq p\leq 2$, then $L^{2}(2\alpha)\hookrightarrow L^{p}(\mathbb{R}^{2})$. Using \eqref{w-omega} and \eqref{2-loc}, we get
        \begin{align*}
            |\omega(\cdot,t)-A(1+t)^{\lambda_{0}}\Omega(\cdot,t)|&=(1+t)^{-1+\frac{1}{p\alpha}}|w(\cdot,\log(1+t))-A(1+t)^{\lambda_{0}}G_{\alpha}(\cdot)|_{p}\\
            &\leq C(1+t)^{\lambda_{0}}\|w(\cdot,\log(1+t))-A(1+t)^{\lambda_{0}}G_{\alpha}(\cdot)\|_{2\alpha}\\
            &\leq C(1+t)^{-1-\mu+\frac{1}{p\alpha}}.
        \end{align*}
        For velocity field, according to Lemma \ref{sol-e}, $q\in (2,\infty)$, we get
        \begin{align*}
            |\textbf{u}(\cdot,t)-A(1+t)^{\lambda_{0}}\textbf{v}^{\Omega}(\cdot,t)|_{q}\leq C|\omega(\cdot,t)-A(1+t)^{\lambda_{0}}\Omega(\cdot,t)|_{p}\leq C(1+t)^{-1+\frac{1}{2\alpha}-\mu+\frac{1}{q\alpha}},\qquad t\geq 0.
        \end{align*}
        Finally, assume that $1<q\leq 2$, and fix $m\alpha \in (2/q,2)$. If $\tilde{w}(\tau)=w(\tau)-A(1+t)^{\lambda_{0}} G_{\alpha}$ and if $\tilde{\textbf{v}}(\tau)$ denotes the corresponding velocity field, it follows from \cite[Proposition B.1]{TC 02} and H\"older's inequality that 
        \begin{align*}
            |\tilde{\textbf{v}}(\tau)|_{q}\leq C|b^{m\alpha-\tfrac{1}{2}}\tilde{\textbf{v}}(\tau)|_{4}\leq C|b^{m\alpha}\tilde{w}(\tau)|_{2}\leq C\|\tilde{w}(\tau)\|_{2\alpha}\leq Ce^{-\mu\tau},\quad \tau\geq 0,
        \end{align*}
        where $b(\xi)=(1+|\xi|^{2})^{1/2}$. Using the change of variables \eqref{v-u}, we thus obtain \eqref{4.6} for $1<q\leq 2$.
    \end{proof}

    \begin{remark}
        Corollary \ref{2alpha-coro} can be viewed as the fractional counterpart of the $L^p$--asymptotic estimates for the classical two-dimensional Navier--Stokes equations obtained in earlier works such as \cite{TC 02, YT 88}. In the integer-order setting the Gaussian Oseen vortex plays the role of the dominant asymptotic profile, whereas in our fractional framework the leading mode is the fractional Oseen-type vortex $(1+t)^{\lambda_0}\Omega^\alpha$. Moreover, the decay is measured in the weighted space $L^2(2\alpha)$, which is well adapted to the nonlocal diffusion operator $(-\Delta)^\alpha$.
    \end{remark}

    \begin{remark}
        Corollary \ref{2alpha-coro} asserts that for sufficiently small $w_0\in L^2(2\alpha)$ the solution $w(\cdot,t)$ converges to the one-parameter family of fractional Oseen-type vortices $A(1+t)^{\lambda_0}\Omega^\alpha(\cdot,t)$, $A=\int_{\mathbb R^2}w_0(x)\mathrm{d}x$, with an explicit algebraic rate in $L^p$, $1\le p\le2$. More precisely, \eqref{4.5} yields
        \begin{align*}
            |w(\cdot,t)-A(1+t)^{\lambda_0}\Omega^\alpha(\cdot,t)|_{p} \lesssim (1+t)^{-1-\mu+\frac{1}{p\alpha}},\quad 1\le p\le2,\ t\ge0,
        \end{align*}
        so the remainder decays faster than the leading profile. By the Biot--Savart law, the velocity $u(\cdot,t)$ satisfies the corresponding bound \eqref{4.6} in $L^q$, $1<q<\infty$. Consequently, the large-time behaviour of small data in $L^2(2\alpha)$ is governed by the fractional Oseen-type vortex, which serves as a nonlinear attractor for the fractional Navier--Stokes flow.
    \end{remark}
    
    So far we have worked in the weighted space $L^{2}(2\alpha)$ (i.e.\ $m=2$). In this setting the relevant part of the spectrum of $\mathcal L_\alpha$ consists only of the simple eigenvalue $\lambda_0$ with eigenfunction $G_\alpha$, and Corollary \eqref{2alpha-coro} shows that small solutions are governed to leading order by the fractional vortex profile $A(1+t)^{\lambda_0}\Omega(\cdot,t)$.

    If we strengthen the topology to $L^{2}(3\alpha)$ ($m=3$), the spectral picture depends on $\alpha$. For $1/2<\alpha\le 2/3$, $\lambda_0$ remains the only eigenvalue in the right half-plane, so the slow subspace is one-dimensional and Corollary \eqref{2alpha-coro} still captures the large-time behaviour in $L^{2}(3\alpha)$. For $2/3<\alpha<1$, however, an additional eigenvalue $\lambda_1=\frac{2\alpha-3}{2\alpha}$ enters the discrete spectrum with multiplicity two, with eigenfunctions $F_{1,\alpha},F_{2,\alpha}$; thus the slow subspace becomes three-dimensional, $\mathrm{span}\{G_\alpha,F_{1,\alpha},F_{2,\alpha}\}$. In this regime the invariant-manifold approach yields a refined asymptotic description by capturing the $\lambda_1$-modes: solutions with small data approach the local slow manifold rapidly, after which the dynamics is effectively finite-dimensional and provides the next-order correction in the $F_{1,\alpha},F_{2,\alpha}$ directions. This leads to the expansion in Theorem \ref{3-alpha}.

    \begin{theorem}\label{3-alpha}
        Fix $\frac{3}{2\alpha}-1<\mu<\min\Bigl\{\frac{\alpha+1}{2\alpha},\;\frac{5}{2\alpha}-2\Bigr\}$ and $2/3<\alpha<1$. There exists $r_{2}>0$ and $C>0$ such that, for initial data $w_{0}\in L^{2}(3\alpha)$ with $\|w_{0}\|_{3\alpha}\leq r_{2}$, the solution $w(\cdot,\tau)$ of \eqref{nve} satisfies
        \begin{align}
            \|w(\xi,\tau)-Ae^{\lambda_{0}\tau}G_{\alpha}(\xi)+(B_{1}F_{1,\alpha}+B_{2}F_{2,\alpha})e^{\lambda_{1}\tau}\|_{3\alpha}\leq Ce^{-\mu\tau},\qquad \tau\geq 0,
        \end{align}
        where $A=\int_{\mathbb{R}^{2}}w_{0}(\xi)\mathrm{d}\xi$, $B_{1}=\int_{\mathbb{R}^{2}}\xi_{1}w_{0}(\xi)\mathrm{d}\xi$ and $B_{2}=\int_{\mathbb{R}^{2}}\xi_{2}w_{0}(\xi)\mathrm{d}\xi$.
    \end{theorem}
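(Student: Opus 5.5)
The plan follows the proof of Proposition \ref{2alpha}, now with $m=3$ and $k=1$. For $2/3<\alpha<1$ we have $3\alpha>2=k+1$, so Theorem \ref{invarint} and Corollary \ref{local invariant} apply in $L^{2}(3\alpha)$. The slow subspace is $E_{\mathrm d}=\mathrm{span}\{G_\alpha,F_{1,\alpha},F_{2,\alpha}\}$. The admissible range for $\mu_2$ is
\begin{align*}
\min\Bigl\{\tfrac{2}{2\alpha}+\tfrac1\alpha-1,\ \tfrac32+\tfrac1{2\alpha}-1\Bigr\}=\min\Bigl\{\tfrac{2}{\alpha}-1,\ \tfrac{\alpha+1}{2\alpha}\Bigr\},
\end{align*}
and $\mu$ is taken strictly below it. Note that the stated bound $\frac5{2\alpha}-2$ is below $\frac2\alpha-1$ for $\alpha>1/2$, so the range in the statement is contained in this one.

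\emph{Step 1: identify $W_{\mathrm d}^{\mathrm{loc}}$.} I would argue as for \eqref{2-Wc-loc}. Take $w=\beta(\tau)G_\alpha+\gamma_1(\tau)F_{1,\alpha}+\gamma_2(\tau)F_{2,\alpha}$ with $\beta=\beta_0e^{\lambda_0\tau}$ and $\gamma_i=\gamma_i^0e^{\lambda_1\tau}$; this solves the linear part exactly. For it to solve \eqref{nve} exactly, the nonlinearity $(\mathbf v\cdot\nabla)w$ must vanish on this span. The self-interaction of $G_\alpha$ vanishes by \eqref{orthogonal}, but the cross terms $\mathbf v^{G_\alpha}\cdot\nabla F_{i,\alpha}+\mathbf v^{F_{i,\alpha}}\cdot\nabla G_\alpha$ need not vanish. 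For $\alpha=1$ they vanish because $F_i$ is a translate-derivative of an exact steady solution. Here $\beta G_\alpha$ is not steady, so I would compute the cross terms directly using radial symmetry. If they do not vanish, $E_{\mathrm d}$ is not itself invariant, and I fall back on the graph $w_s+g(w_s)$ with $g(w_s)=O(\|w_s\|^2)$.

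\emph{Step 2: the reduced ODE on the manifold.} Along $W_{\mathrm d}^{\mathrm{loc}}$, the quadratic correction $g$ and the $E_{\mathrm d}$-components of $\mathcal N$ contribute terms of size $e^{2\lambda_0\tau}$ and $e^{(\lambda_0+\lambda_1)\tau}$. One has $-2\lambda_0=\frac2\alpha-2$, and $\frac2\alpha-2<\frac{3}{2\alpha}-1$ exactly when $\alpha>1/2$. So the size-$e^{2\lambda_0\tau}$ term decays \emph{slower} than $e^{\lambda_1\tau}$ and could spoil the expansion. I therefore do not want to control it by size; I want to show it is absent. The key tool is the moments. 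Integrating \eqref{nve} gives \eqref{beta}, so $\int w=Ae^{\lambda_0\tau}$ exactly. Multiplying by $\xi_i$ and integrating, $\int\xi_i(-\Delta)^\alpha w=0$ (its Fourier symbol vanishes to order $2\alpha>1$ at $0$). The dilation term gives $-\frac{3}{2\alpha}\int\xi_i w$. The nonlinear term gives $\int v_iw$, and $\int \mathbf v w=0$ for divergence-free Biot--Savart velocity (the standard antisymmetry of the kernel). Hence $\int\xi_iw(\tau)=B_ie^{\lambda_1\tau}$ exactly. Both computations need $w\in L^{2}(3\alpha)$, which has enough moments since $3\alpha>2$.

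\emph{Step 3: conclusion.} Corollary \ref{local invariant} gives $w_0^*\in W_{\mathrm d}$ with $\|w(\tau)-w^*(\tau)\|_{3\alpha}\le Ce^{-\mu\tau}$. I would decompose $w(\tau)=a(\tau)G_\alpha+b_i(\tau)F_{i,\alpha}+r(\tau)$ with $r\in E_{\mathrm c}$, where $P_1$ is given by the moment functionals (the dual eigenfunctions are $1$ and $\xi_i$, up to sign and normalization, since $\int\xi_iF_{j,\alpha}=-\delta_{ij}$). Exact moment conservation then gives $a=Ae^{\lambda_0\tau}$ and $b_i=-B_ie^{\lambda_1\tau}$ exactly; this accounts for the sign in the statement. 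It remains to show $\|Q_1w(\tau)\|_{3\alpha}\le Ce^{-\mu\tau}$. I would use the Duhamel formula projected by $Q_1$, the bound \eqref{eq:semigroup-fast}, and the quadratic nonlinearity of size $e^{2\lambda_0\tau}$. This is the \textbf{main obstacle}. The forcing on $E_{\mathrm c}$ of size $e^{-(2/\alpha-2)\tau}$ is only acceptable if $\frac2\alpha-2\ge\mu$, which fails in the given range. So I must show the leading quadratic forcing $Q_1[(\mathbf v^{G_\alpha}\cdot\nabla)G_\alpha]$ vanishes, which holds by \eqref{orthogonal}. The next term has size $e^{(\lambda_0+\lambda_1)\tau}=e^{-(\frac{5}{2\alpha}-2)\tau}$. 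This explains the constraint $\mu<\frac5{2\alpha}-2$ and closes the estimate, with a Gronwall argument on the remaining terms, all of which are cubic or involve $r$.
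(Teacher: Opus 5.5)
Your proposal is correct and is essentially the paper's own argument: the $k=1$ slow manifold in $L^{2}(3\alpha)$ from Corollary \ref{local invariant}, exact evolution of $\int w$ and $\int \xi_i w$ fixing the $E_{\mathrm d}$-coefficients (with the sign coming from $\int\xi_iF_{j,\alpha}=-\delta_{ij}$), and a Duhamel bound on the $Q$-component in which the $A^2$ forcing vanishes by \eqref{orthogonal} and the $AB$ forcing yields $\mu<\frac{5}{2\alpha}-2$, as in Remark \ref{rem:AA-vanish}. For that Duhamel step you need the smoothing estimate with the derivative placed on the semigroup, as in Lemma \ref{nonlinear-es} and Proposition \ref{group-est}, rather than \eqref{eq:semigroup-fast} alone, since $(\mathbf v\cdot\nabla)w$ need not lie in $L^{2}(3\alpha)$. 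On the question you left open in Step 1: since $\mathbf v^{F_{i,\alpha}}=\partial_i\mathbf v^{G_\alpha}$, the cross term equals $\partial_i(\mathbf v^{G_\alpha}\cdot\nabla G_\alpha)=0$ whether or not $G_\alpha$ is steady, so bounding it by its size (as both you and the paper do) is harmless but is exactly what introduces the restriction $\mu<\frac{5}{2\alpha}-2$.
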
    
    \begin{proof}
        Let $E_{\mathrm d}=\mathrm{span}\{G_\alpha,\partial_1G_\alpha,\partial_2G_\alpha\}$ and $E_{\mathrm c}$ be the spectral subspace associated with $\sigma_c=\{\lambda\in\mathbb C:\Re\lambda\le-(\alpha+1)/(2\alpha)\}$. By Corollary \ref{local invariant} (with $k=1$, $m=3$), any $w\in W_{\mathrm d}^{\mathrm{loc}}$ admits the unique decomposition
        \begin{equation}\label{5.9}
            w=\beta G_\alpha+\zeta_1\partial_1G_\alpha+\zeta_2\partial_2G_\alpha+g(\beta,\boldsymbol\zeta),\qquad g(\beta,\boldsymbol\zeta)\in E_{\mathrm c}.
        \end{equation}
        If $w$ solves \eqref{nve}, then $\dot\beta=(1-\frac1\alpha)\beta$ (Proposition \ref{2alpha}).

        Multiplying \eqref{5.9} by $\xi_j$ and integrating over $\mathbb R^2$, radial symmetry and integration by parts give
        \begin{align*}
            \int_{\mathbb R^2}\xi_j G_\alpha\mathrm{d}\xi=0,\qquad \int_{\mathbb R^2}\xi_j\partial_i G_\alpha\mathrm{d}\xi=-\delta_{ij},
        \end{align*}
        hence
        \begin{align*}
            \zeta_i(\tau)=-\int_{\mathbb R^2}\xi_i w(\xi,\tau)\mathrm{d}\xi, \qquad\text{and}\qquad \int_{\mathbb R^2}\xi_i g(\xi,\tau)\mathrm{d}\xi=0.
        \end{align*}
        Differentiating and using \eqref{nve} yields
        \begin{align*}
            \dot\zeta_i=-\int_{\mathbb R^2}\xi_i\mathcal L_\alpha w\mathrm{d}\xi+\int_{\mathbb R^2}\xi_i(\mathbf v\cdot\nabla)w\mathrm{d}\xi .
        \end{align*}
        Moreover,
        \begin{align*}
            \int_{\mathbb R^2}\xi_i(\xi\cdot\nabla w)\mathrm{d}\xi =-\int_{\mathbb R^2}\nabla\cdot(\xi_i\xi)\,w\mathrm{d}\xi =3\int_{\mathbb R^2}\xi_i w\mathrm{d}\xi=-3\zeta_i, \qquad\int_{\mathbb R^2}\xi_i(-\Delta)^\alpha w\mathrm{d}\xi=0 \ \bigl((-\Delta)^\alpha\xi_i=0\bigr),
        \end{align*}
        and
        \begin{align*}
            \int_{\mathbb R^2}\xi_i(\mathbf v\cdot\nabla)w\mathrm{d}\xi =-\int_{\mathbb R^2}v_i w\mathrm{d}\xi=0,
        \end{align*}
        since $\widehat v_i(k)=-i\frac{k_i^\perp}{|k|^2}\widehat w(k)$ makes the integrand odd. Therefore,
        \begin{align*}
            \dot\zeta_i=\Bigl(1-\frac{3}{2\alpha}\Bigr)\zeta_i,\qquad i=1,2.
        \end{align*}

        Next, we will estimate the nonlinear term $g(\beta,\boldsymbol{\zeta})$. Perform the $Q$-projection on \eqref{5.9} and let $g(\tau)=Qw(\tau)$ satisfy $\partial_{\tau}g(\tau)=\mathcal{L}_{\alpha}g(\tau)-Q\nabla\cdot(\textbf{v}w)$. To estimate the nonlinear term, we use the smoothing properties of the semigroup $e^{\tau\mathcal{L}_{\alpha}}$ combined with $L^q$ estimates. Fix $q \in (1, 2)$ sufficiently close to 2. According to Proposition \ref{group-est}, we have
        \begin{align*}
            \|\nabla\cdot e^{\tau \mathcal{L}_{\alpha}}f\|_{3\alpha}\leq Ct^{-\gamma}e^{-\sigma t}\|f\|_{3\alpha},
        \end{align*}
        where $\gamma=\frac{1}{2\alpha}$ and $\sigma=\frac{\alpha+1}{2\alpha}$. Following the proof of Theorem \eqref{global}, we have
        \begin{align*}
            \|\textbf{v}w\|_{L^q(3\alpha)} \leq C \|w\|_{3\alpha}^2.
        \end{align*}
        Moreover 
        \begin{align*}
            g(\tau)=e^{\tau\mathcal{L}_{\alpha}}h_{0}-\int_{0}^{\tau}e^{-\frac{1}{2\alpha}(\tau-s)}\nabla\cdot e^{(\tau-s)\mathcal{L}_{\alpha}}Q(\textbf{v}w)\mathrm{d}s,
        \end{align*}
        where $g(0)=g_{0}$. Then 
        \begin{align}\label{5.10}
            \|g(\tau)\|_{3\alpha}\leq Ce^{-\sigma\tau}\|g_{0}\|_{3\alpha}+C\int_{0}^{\tau}(\tau-s)^{-\tfrac{1}{2\alpha}}e^{-\sigma(\tau-s)}\|w(s)\|^{2}_{3\alpha}\mathrm{d}s.
        \end{align}
        Since 
        \begin{align*}
            \|w(s)\|_{3\alpha}\leq C(|\beta(s)|+|\boldsymbol{\zeta}|+\|g(s)\|_{3\alpha})=C(|A|e^{\lambda_{0}\tau}+|B|e^{\lambda_{1}s}+\|g(s)\|_{3\alpha}).
        \end{align*}
        Squaring both sides, we obtain
        \begin{align}\label{5.11}
            \|w(s)\|_{3\alpha}^{2}\leq &C(|A|^{2}e^{2\lambda_{0}s}+|B|^{2}e^{2\lambda_{1}s}+|A||B|e^{(\lambda_{0}+\lambda_{1})s}\notag\\
            &+(|A|e^{\lambda_{0}s}+|B|e^{\lambda_{1}s})\|g(s)\|_{3\alpha}+\|g(s)\|_{3\alpha}^{2}).
        \end{align}

        \begin{remark}\label{rem:AA-vanish}
            Let $P$ be the spectral projection onto the slow space $E_{\mathrm{d}}=\mathrm{span}\{G_\alpha,F_{1,\alpha},F_{2,\alpha}\}$ and set $Q:=I-P$. Decompose $w=\beta(\tau)G_\alpha+\boldsymbol{\zeta}(\tau)\!\cdot\!\boldsymbol F+g(\tau)$, $Pw=\beta G_\alpha+\boldsymbol{\zeta}\!\cdot\!\boldsymbol F$, $Qw=g$. Since $E_{\mathrm{d}}$ is invariant under $\mathcal L_\alpha$, we have $P\mathcal L_\alpha w=\lambda_0\,\beta\,G_\alpha+\lambda_1\,\boldsymbol{\zeta}\!\cdot\!\boldsymbol F$, $Q\mathcal L_\alpha w=\mathcal L_\alpha g$. Thus the linear part is purely first order in $(\beta,\boldsymbol{\zeta},g)$ and produces no quadratic interactions; all product terms come from the bilinear nonlinearity $\mathcal N(w)$, hence any quadratic forcing in the $g$--equation arises from $Q\mathcal N(w)$. Moreover, the $|A|^{2}$ self-interaction cancels: $\mathcal N(\beta G_\alpha)=-\nabla_\xi\!\cdot\bigl(\mathbf v[\beta G_\alpha]\ \beta G_\alpha\bigr)=-\beta^2\,\nabla_\xi\!\cdot\bigl(\mathbf v^{G_\alpha}G_\alpha\bigr)\equiv0$, since $\nabla\!\cdot(\mathbf v^{G_\alpha}G_\alpha)=0$. Consequently, $\partial_\tau g=\mathcal L_\alpha g+Q\mathcal N(w)$ contains no $|A|^2$ forcing; $Q\mathcal N(w)$ consists only of mixed terms (e.g.\ $AB$, $B^2$, $Ah$, $Bh$, $g^2$). This cancellation relies only on the 2D Biot--Savart structure and the radial symmetry of $G_\alpha$, and holds for both integer and fractional diffusion.
        \end{remark}

        Let $k(t):=t^{-\frac{1}{2\alpha}}e^{-\sigma t}$, $t>0$. We recall the following integral estimate.
        \begin{lemma}
            For any local bounded and non-negative function $X$ and any $\tilde{\sigma}>0$ satisfy $\tfrac{3}{2\alpha}-1<\mu<\tilde{\sigma}$, define 
            \begin{align*}
                Y(\tau):=e^{\mu\tau}\int_{0}^{\tau}k(\tau-s)X(s)\mathrm{d}s.
            \end{align*}
            then
            \begin{align}\label{15}
                \sup\limits_{\tau\geq 0}Y(\tau)\leq C(\tilde{\sigma}-\mu)^{\frac{1}{2\alpha}-1}\sup\limits_{s\geq 0}e^{\mu s}X(s).
            \end{align}
        \end{lemma}
        \begin{proof}   
            Assume that $\tilde{k}_{\mu}(t):=e^{\mu t}k(t)=t^{-\frac{1}{2\alpha}}e^{-(\tilde{\sigma}-\mu)t}$. Hence 
            \begin{align*}
                Y(\tau)=\int_{0}^{\tau}\tilde{k}_{\mu}(\tau-s)e^{\mu s}X(s)\mathrm{d}s\leq |\tilde{k}_{\mu}|_{1}\sup\limits_{s\leq \tau}e^{\mu s}X(s).
            \end{align*}
            A direct calculation yields  
            \begin{align*}
                |\tilde{k}_{\mu}|_{1}=\int_{0}^{\infty}t^{-\frac{1}{2\alpha}}e^{-(\tilde{\sigma}-\mu)t}\mathrm{d}t=\Gamma(1-\tfrac{1}{2\alpha})(\tilde{\sigma}-\mu)^{\frac{1}{2\alpha}-1}.
            \end{align*}
            Thus, the inequality holds with $C=\Gamma(1-\tfrac{1}{2\alpha})$.
        \end{proof}
        Plugging \eqref{5.11} into \eqref{5.10}, assume $M(\tau)=\sup\limits_{0\leq s\leq \tau}e^{\mu s}\|g(s)\|_{3\alpha}$, $\tfrac{3}{2\alpha}-1<\mu<\tilde{\sigma}$. Multiplying \eqref{5.10} by $e^{\mu \tau}$, we obtain
        \begin{align}\label{}
            M(\tau)\leq &Ce^{(-\tilde{\sigma}-\mu)\tau}\|g_{0}\|_{3\alpha}\notag\\
            &+\frac{C}{(\tilde{\sigma}-\mu)^{1-\gamma}}\bigg[|A||B|\sup\limits_{s\leq \tau}e^{(\mu+\lambda_{0}+\lambda_{1})s}+|B|^{2}\sup\limits_{s\leq \tau}e^{(\mu+2\lambda_{1})s}\bigg]\notag\\
            &+\frac{C}{(\tilde{\sigma}-\mu)^{1-\gamma}}\bigg[|A|\sup\limits_{s\leq \tau}e^{(\mu+\lambda_{0})s}\|g(s)\|^{*}_{3\alpha}+|B|\sup\limits_{s\leq \tau}e^{(\mu+\lambda_{1})s}\|g(s)\|^{*}_{3\alpha}\bigg]\notag\\
            &+\frac{C}{(\tilde{\sigma}-\mu)^{1-\gamma}}\sup\limits_{s\leq \tau}e^{\mu s}\|g(s)\|_{3\alpha}^{2},
        \end{align}
        where $\|g(s)\|_{3\alpha}^{*}:=\sup\limits_{s\leq \tau}\|g(s)\|_{3\alpha}\leq e^{-\mu s}M(\tau)$. Moreover 
        \begin{align*}
            M(\tau)\leq &Ce^{-(\tilde{\sigma}-\mu)\tau}\|g_{0}\|_{3\alpha}+\frac{C}{(\tilde{\sigma}-\mu)^{1-\gamma}}(|A||B|\underbrace{\sup\limits_{s\leq \tau}e^{(\mu+\lambda_{0}+\lambda_{1})s}}_{I_{1}}+|B|^{2}\underbrace{\sup\limits_{s\leq \tau}e^{(\mu+2\lambda_{1})s}}_{I_{2}})\\
            &+\frac{C}{(\tilde{\sigma}-\mu)^{1-\gamma}}[(|A|+|B|)M(\tau)+M^{2}(\tau)].
        \end{align*}
        To ensure that $I_{1}$ and $I_{2}$ are bounded and do not diverge with $\tau$, we need $\mu+\lambda_{0}+\lambda_{1}\leq 0, \mu+2\lambda_{1}\leq 0$. That is,
        \begin{align*}
            \mu\leq -(\lambda_{0}+\lambda_{1})=\frac{5}{2\alpha}-2,\qquad \mu\leq -2\lambda_{1}=\frac{3}{\alpha}-2.
        \end{align*}
        For $2/3<\alpha<1$, so the second upper bound is looser than $\nu<1$. The one that actually takes effect is 
        \begin{align}
            \mu\leq \frac{5}{2\alpha}-2.
        \end{align}
        Then, we have
        \begin{align*}
            M(\tau)\leq C\|g_{0}\|_{3\alpha}+\frac{C}{(\sigma-\mu)^{1-\gamma}}(|A||B|+|B|^{2})+\frac{C}{(\sigma-\mu)^{1-\gamma}}[(|A|+|B|)M(\tau)+M^{2}(\tau)].
        \end{align*}
        Choose small initial value, $\tfrac{3}{2\alpha}-1\leq\mu\leq \tfrac{5}{2\alpha}-2$ such that
        \begin{align*}
            M(\tau)\leq C(\|g_{0}\|_{3\alpha}+|A||B|+|B|^{2}).
        \end{align*}
        According to $M(\tau)=\sup\limits_{s\leq \tau}e^{\mu s}\|g(s)\|_{3\alpha}$, we infer 
        \begin{align*}
            \|g(s)\|_{3\alpha}\leq Ce^{-\mu\tau},\qquad \tau\geq 0.
        \end{align*}
        In particular, if $A=0$, then the constraint on $AB$ and $A$ vanishes. At this point, $\mu<\frac{\alpha+1}{2\alpha}$.

        Assume now that $w(\cdot,\tau)$ is a solution of \eqref{nve} on $W_{\text{d}}^{\text{loc}}$, and let $\beta(\tau)=Ae^{\lambda_{0}\tau}$, $\zeta_{1}(\tau)=B_{1}e^{\lambda_{1}\tau}$ and $\zeta_{2}(\tau)=B_{2}e^{\lambda_{1}\tau}$, where $A$, $\zeta_{1}$ and $\zeta_{2}$ are as in Theorem \ref{3-alpha}. Then
        \begin{align*}
            w(\cdot,\tau)=\beta(\tau)G_{\alpha}+\zeta_{1}F_{1,\alpha}+\zeta_{2}F_{2,\alpha}+g(\beta(\tau),\boldsymbol{\zeta}(\tau)).
        \end{align*}

        On the other hand, if $\|w_{0}\|_{3\alpha}\leq r_{2}$ and if $w(\tau)$ is the solution of \eqref{nve} with initial data $w_{0}$, Corollary \ref{local invariant} shows that there exists a solution $\tilde{w}(\tau)$ on $W_{\text{d}}^{\text{loc}}$ such that $\|w(\tau)-\tilde{w}(\tau)\|_{3\alpha}\leq Ce^{-\tilde{\mu} \tau}$ with $\tilde{\mu}\in (\tfrac{3}{2\alpha}-1,\tfrac{\alpha+1}{2\alpha})$. Thus, we have
        \begin{align*}
            \|w(\tau)-\beta(\tau)G_{\alpha}-\boldsymbol{\zeta}(\tau)\cdot \boldsymbol{F}\|_{3\alpha}&\leq \|w(\tau)-\tilde{w}(\tau)\|_{3\alpha}+\|\tilde{w}(\tau)-\beta(\tau)G_{\alpha}-\boldsymbol{\zeta}\cdot\boldsymbol{F}\|_{3\alpha}\\
            &\leq C(e^{-\nu\tau}+e^{-\tilde{\mu}\tau})\leq Ce^{-\mu\tau}.
        \end{align*}
        Here $\mu$ can be chosen so that
        \begin{align*}
            \text{if } A = 0 \ ,\quad \frac{3}{2\alpha}-1 < \mu < \frac{\alpha+1}{2\alpha};\qquad \text{if } A \neq 0,\quad \frac{3}{2\alpha}-1 < \mu < \min\Bigl\{\frac{\alpha+1}{2\alpha},\;\frac{5}{2\alpha}-2\Bigr\}.
        \end{align*}
        The conclusion has been proved.
    \end{proof}
    \begin{remark}
        Although the above asymptotic expansion near the fractional Oseen-type vortex is derived under the restriction  $\frac{3}{2\alpha}-1<\mu<\min\Bigl\{\frac{\alpha+1}{2\alpha},\;\frac{5}{2\alpha}-2\Bigr\}$ on the weight parameter, the spectral facts we used are in fact independent of \(\mu\). More precisely, for any admissible value of \(\mu\) such that \(G_\alpha, F_{1,\alpha}, F_{2,\alpha}\in L^2(2\alpha)\), the operator \(\mathcal L_\alpha\) has the simple eigenvalue\(\lambda_0 = 1-\frac1\alpha\) with eigenfunction \(G_\alpha\), and the double eigenvalue \(\lambda_1 = \frac{2\alpha-3}{2\alpha}\) with eigenfunctions \(F_{1,\alpha},F_{2,\alpha}\).
    \end{remark}

    We can also rewrite the result of Theorem \ref{3-alpha} in terms of the unscaled variables as we did in Corollary \ref{2alpha-coro}. Define 
    \begin{align*}
        \omega_{\mathrm{app}}(x,t)&=\frac{A}{(1+t)^{1/\alpha}}G_{\alpha}\!\left(\frac{x}{(1+t)^{1/(2\alpha)}}\right)+\sum_{i=1}^{2}\frac{B_i}{(1+t)^{3/(2\alpha)}}F_{i,\alpha}\!\left(\frac{x}{(1+t)^{1/(2\alpha)}}\right),\\
        \mathbf u_{\mathrm{app}}(x,t)&=\frac{A}{(1+t)^{1/(2\alpha)}}\mathbf v^{G_{\alpha}}\!\left(\frac{x}{(1+t)^{1/(2\alpha)}}\right)+\sum_{i=1}^{2}\frac{B_i}{(1+t)^{1/\alpha}}\mathbf v^{F_{i,\alpha}}\!\left(\frac{x}{(1+t)^{1/(2\alpha)}}\right).
    \end{align*}

    \begin{corollary}
        Fix $\frac{3}{2\alpha}-1<\mu<\min\Bigl\{\frac{\alpha+1}{2\alpha},\;\frac{5}{2\alpha}-2\Bigr\}$ and $2/3<\alpha<1$. There exists $r_{2}>0$ such that, for all initial data $\omega_{0}\in L^{2}(3\alpha)$ with $\|\omega_{0}\|_{3\alpha}\leq r_{2}$, the solution $\omega(\cdot,t)$ of \eqref{ve} satisfies 
        \begin{align*}
            |\omega(\cdot,t)-\omega_{\text{app}}(\cdot,t)|_{p}\leq \frac{C_{p}}{(1+t)^{1+\mu-\tfrac{1}{p\alpha}}},\quad 1\leq p\leq 2,\quad t\geq 0,
        \end{align*}
        where $A=\int_{\mathbb{R}^{2}}\omega_{0}(x)\mathrm{d}x$ and $B_{i}=-\int_{\mathbb{R}^{2}}x_{i}\omega_{0}(x)\mathrm{d}x$, $i=1,2$. If $\textbf{u}(x,t)$ is the velocity field obtained from $\omega(x,t)$ via the Biot-Savart law \eqref{B-S}, then
        \begin{align*}
            |\textbf{u}(\cdot,t)-\textbf{u}_{\text{app}}(\cdot,t)|_{q}\leq \frac{C_{q}}{(1+t)^{1-\tfrac{1}{2\alpha}+\mu-\tfrac{1}{q\alpha}}},\quad 1\leq q<\infty,\quad t\geq 0.
        \end{align*}
    \end{corollary}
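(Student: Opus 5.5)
This corollary follows from Theorem \ref{3-alpha} by undoing the self-similar change of variables, in the same way Corollary \ref{2alpha-coro} follows from Proposition \ref{2alpha}. Take $w_0=\omega_0$ (the two initial data coincide at $t=0$, $\tau=0$). Let $w$ be the solution of \eqref{nve} given by Theorem \ref{3-alpha}, and set
\begin{align*}
\tilde w(\tau):=w(\tau)-Ae^{\lambda_0\tau}G_\alpha-\sum_{i=1}^2 B_i e^{\lambda_1\tau}F_{i,\alpha},
\end{align*}
so that $\|\tilde w(\tau)\|_{3\alpha}\le Ce^{-\mu\tau}$. I will choose the sign convention for $B_i$ consistently with the identity $\zeta_i=-\int\xi_i w\,\mathrm d\xi$ from the proof of Theorem \ref{3-alpha}. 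This is why the corollary writes $B_i=-\int x_i\omega_0$.

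\emph{Vorticity bound.} By \eqref{w-omega}, with $\tau=\log(1+t)$, the function $(1+t)^{-1}\tilde w(x(1+t)^{-1/2\alpha},\tau)$ equals $\omega-\omega_{\mathrm{app}}$. For this I must check that the prefactors in $\omega_{\mathrm{app}}$ agree with $(1+t)^{-1}e^{\lambda_0\tau}=(1+t)^{-1/\alpha}$ and $(1+t)^{-1}e^{\lambda_1\tau}=(1+t)^{-3/(2\alpha)}$; they do. Scaling the $L^p$ norm gives
\begin{align*}
|\omega-\omega_{\mathrm{app}}|_p=(1+t)^{-1+\frac{1}{p\alpha}}|\tilde w(\tau)|_p .
\end{align*}
Since $3\alpha>2\ge 2(1-1/p)$ for $1\le p\le2$, the embedding $L^2(3\alpha)\hookrightarrow L^p$ (Proposition \ref{prop:L2mq_to_Lq}) gives $|\tilde w|_p\le C\|\tilde w\|_{3\alpha}\le C(1+t)^{-\mu}$. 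This is exactly the first estimate.

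\emph{Velocity bound.} By \eqref{v-u}, $\mathbf u-\mathbf u_{\mathrm{app}}=(1+t)^{-1+1/2\alpha}\tilde{\mathbf v}(\cdot(1+t)^{-1/2\alpha},\tau)$, where $\tilde{\mathbf v}$ is the Biot--Savart velocity of $\tilde w$. Again I will check the prefactors: $(1+t)^{-1+1/2\alpha}e^{\lambda_0\tau}=(1+t)^{-1/(2\alpha)}$ and $(1+t)^{-1+1/2\alpha}e^{\lambda_1\tau}=(1+t)^{-1/\alpha}$. Scaling gives
\begin{align*}
|\mathbf u-\mathbf u_{\mathrm{app}}|_q=(1+t)^{-1+\frac1{2\alpha}+\frac{1}{q\alpha}}|\tilde{\mathbf v}(\tau)|_q .
\end{align*}
It then remains to show $|\tilde{\mathbf v}|_q\le C\|\tilde w\|_{3\alpha}$, which I split by the range of $q$.
\begin{itemize}
\item For $2<q<\infty$, Lemma \ref{sol-e}(a) gives $|\tilde{\mathbf v}|_q\le C|\tilde w|_p$ with $1/p=1/q+1/2\in(1/2,1)$. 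Combined with the embedding, this is enough.
\item For $1<q\le2$, I argue as in the proof of Corollary \ref{2alpha-coro}, using the weighted Biot--Savart estimate \cite[Proposition B.1]{TC 02} together with H\"older. This gives $|\tilde{\mathbf v}|_q\le C|b^{m'}\tilde{\mathbf v}|_4\le C\|\tilde w\|_{m'}$ for some $m'\in(2/q,2)\subset[0,3\alpha]$. The relevant structural fact is that $\tilde w$ has zero mass only if $A$ is matched, which holds since $\int\tilde w=0$ by the moment identities. That cancellation is what makes $\tilde{\mathbf v}$ decay fast enough to lie in $L^q$ for $q\le 2$.
\item The endpoint $q=1$ stated in the corollary is the delicate point. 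Here I would additionally use the vanishing first moments $\int\xi_i\tilde w=0$ (also inherited from the proof of Theorem \ref{3-alpha}). Zero mass and zero first moments make $\tilde{\mathbf v}=O(|\xi|^{-3})$ at infinity, and this requires the weight $3\alpha>2$. This yields $L^1$ integrability via the weighted estimate.
\end{itemize}

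I expect this endpoint $q=1$ (and $q$ near $1$) to be the main obstacle. If it fails, I would restrict the statement to $1<q<\infty$.
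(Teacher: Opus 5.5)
Your route is the one the paper intends. The corollary is stated without proof as the unscaled form of Theorem~\ref{3-alpha}, obtained exactly as Corollary~\ref{2alpha-coro} is obtained from Proposition~\ref{2alpha}. Your prefactor bookkeeping, the $L^p$/$L^q$ scaling exponents, and the sign convention $B_i=-\int x_i\omega_0$ (from $\zeta_i=-\int\xi_i w$) are all correct. Two points need repair, and the first is the substantive one.

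\emph{The endpoint $q=1$.} The claim $\tilde{\mathbf v}=O(|\xi|^{-3})$ would need finite second moments of $\tilde w$. $L^2(3\alpha)$ does not provide these, because $3\alpha<3$. For example, a radial zero-mass $\tilde w$ with tail $|\xi|^{-\gamma}$, $\gamma$ slightly above $1+3\alpha$, lies in $L^2(3\alpha)$ and has $\tilde{\mathbf v}\sim|\xi|^{1-\gamma}$, which decays more slowly than $|\xi|^{-3}$. Pointwise decay is not what you need, though. Fix $m\in(2,3\alpha]$ (possible because $\alpha>2/3$) and estimate
\[
|\tilde{\mathbf v}|_1\le |b^{1-m}|_2\,|b^{m-1}\tilde{\mathbf v}|_2\le C\,|b^{m}\tilde w|_2\le C\|\tilde w\|_{3\alpha}.
\]
Here $b^{1-m}\in L^2(\mathbb R^2)$ because $m-1>1$. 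The middle inequality is the weighted Biot--Savart bound for $2<m<3$ under vanishing mass and first moments. It follows from the same Fourier/Hardy argument as in Appendix B of \cite{TC 02}: $\hat{\tilde w}\in H^m$ with $\hat{\tilde w}(0)=0$ and $\nabla\hat{\tilde w}(0)=0$, so $-ik^\perp|k|^{-2}\hat{\tilde w}\in H^{m-1}$.

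These moment conditions do hold:
\[
\int\xi_j w(\tau)\,\mathrm d\xi=e^{\lambda_1\tau}\int\xi_j w_0\,\mathrm d\xi,\qquad \int\xi_j G_\alpha=0,\qquad \int\xi_j F_{i,\alpha}=-\delta_{ij}.
\]
The first identity is the computation in the proof of Theorem~\ref{3-alpha}. The first moments are finite precisely because $6\alpha-2>2$. So $q=1$ is covered and there is no need to fall back to $q>1$. What would genuinely destroy $L^1$ integrability is a dipole tail $\sim|\xi|^{-2}$, and subtracting $B_i e^{\lambda_1\tau}F_{i,\alpha}$ removes exactly that. Likewise, $q$ near $1$ is no obstacle, since $(2/q,2)$ is nonempty for every $q>1$.

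\emph{The second bullet.} As written, $|b^{m'}\tilde{\mathbf v}|_4\le C|b^{m'}\tilde w|_2$ loses half a power of the weight and is false in general. Radial zero-mass tails $|\xi|^{-\gamma}$ with $\gamma\in(m'+1,m'+\tfrac32]$ are counterexamples. The weight on the velocity should be $b^{m'-1/2}$, as in the proof of Corollary~\ref{2alpha-coro}. Equivalently, use $|\tilde{\mathbf v}|_q\le|b^{1-m'}|_r\,|b^{m'-1}\tilde{\mathbf v}|_2\le C|b^{m'}\tilde w|_2$ with $\tfrac1r=\tfrac1q-\tfrac12$, which requires exactly $m'>2/q$.
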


    \subsection{Attenuation in the strongly stable direction}
    In this subsection we study the detailed large-time asymptotics of small solutions near the zero state and obtain \emph{spectrally optimal} decay rates. Recent work on the 2D fractional Navier--Stokes equations with dissipation $(-\Delta)^\alpha$, $0<\alpha<1$, shows that for Leray solutions with $u_0\in L^1(\mathbb R^2)\cap L^2_\sigma(\mathbb R^2)$ one has the optimal energy upper bound $|u(t)|_{2}\le C(1+t)^{-1/\alpha}$, equivalently $(1+t)^{1/\alpha}|u(t)|_2$ is uniformly bounded. This estimate does not improve for small data at the level of energy methods. Here we show that, for sufficiently small data in suitable weighted spaces, the $L^2$ decay can be strictly faster; see Theorem \ref{opt}.

    Our approach adapts the dynamical-systems framework of Gallay--Wayne (e.g.\ \cite{TC 02}) to the fractional setting. In similarity variables and on weighted spaces $L^2(m\alpha)$, the spectrum of the linearized operator $\mathcal L_\alpha$ splits into a finite-dimensional slow subspace (spanned by $G_\alpha,F_{1,\alpha},F_{2,\alpha}$) and a strongly stable complement where the semigroup decays exponentially in similarity time. This yields local invariant manifolds near the origin and sharper decay information: the slow projection determines the leading algebraic rate in physical time, while the strong-stable component decays faster, at a rate dictated by the spectral gap of $\mathcal L_\alpha$.

    Finally, unlike the classical case $\alpha=1$ where sharp $L^2$ decay exponents are known (e.g.\ \cite{TM 01,TC 02}), the fractional theory has so far provided mainly heat-kernel-type upper bounds (e.g.\ \cite{ZL 24}). Our invariant-manifold analysis gives decay rates sensitive to the spectral data (and thus to moment conditions on the initial vorticity), providing a first step toward a sharp asymptotic theory in the fractional regime.

    Throughout this subsection we work in $L^{2}(m\alpha)$ with $m=4-\varepsilon$ for $\varepsilon>0$ sufficiently small. If $1/2<\alpha\le 3/4$, the discrete spectrum relevant to our analysis is the same as in the previous subsection (only the $\lambda_1$-modes with eigenfunctions $F_{1,\alpha},F_{2,\alpha}$). If $3/4<\alpha<1$, the condition $2\alpha-1<k<4\alpha-1$ isolates one additional eigenvalue $\lambda_2$ of multiplicity three, with eigenfunctions $H_{1,\alpha},H_{2,\alpha},H_{3,\alpha}$; we exploit this extra mode to refine the asymptotic description of small solutions.
    \begin{theorem}\label{opt}
        Assume that $3/4<\alpha<1$, $\textbf{u}_{0}\in L^{1}(\mathbb{R}^{2})^{2}$, $\nabla\cdot\textbf{u}_{0}=0$, and $(1+|x|)\textbf{u}_{0}\in L^{1}(\mathbb{R}^{2})^{2}$. Let $\textbf{u}(t)$ be a corresponding global small solution of the Navier-Stokes equation with initial data $\textbf{u}_{0}$. For all $k,l=1\  \text{or}\ 2$, define 
        \begin{align}\label{ckl}
            b_{kl}=\int_{\mathbb{R}^{2}}x_{l}(\textbf{u}_{0})_{k}(x)\mathrm{d}x,\qquad c_{kl}=\int_{0}^{\infty}\int_{\mathbb{R}^{2}}u_{k}(x,t)u_{l}(x,t)\mathrm{d}x\mathrm{d}t.
        \end{align}
        Then
        \begin{align}
            \lim\limits_{t\to\infty} t^{\frac{1}{\alpha}}|\textbf{u}(t)|_{2}=0
        \end{align}
        if and only if there exists $c\geq 0$ such that
        \begin{align}\label{cond}
            b_{kl}=0\quad\text{and}\quad c_{kl}=c\delta_{kl},\quad k,l=1\ \text{or}\ 2.
        \end{align}
    \end{theorem}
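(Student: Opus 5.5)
The plan is to run the invariant-manifold machinery of Section 4 in $L^{2}(m\alpha)$ with $m=4-\varepsilon$. For $3/4<\alpha<1$ and $\varepsilon$ small, the slow space is
\[
E_{\mathrm d}=\mathrm{span}\{G_\alpha,F_{1,\alpha},F_{2,\alpha},H_{1,\alpha},H_{2,\alpha},H_{3,\alpha}\}.
\]
The aim is to reduce the statement to the vanishing of the coefficients of this six-dimensional slow dynamics, and then to translate that vanishing into the moment conditions \eqref{cond}.

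\emph{Step 1: scaling and the generic rate.} By \eqref{v-u},
\[
|\mathbf u(t)|_2=(1+t)^{-1+\frac1\alpha}\,|\mathbf v(\tau)|_2,\qquad \tau=\log(1+t).
\]
Since $\mathbf u_0\in L^1$, we have $\int\omega_0=0$, so $A=0$. Since $(1+|x|)\mathbf u_0\in L^1$, $\omega_0$ has finite first and second moments and lies (after a short smoothing time) in $L^2(m\alpha)$. Expanding as in Theorem \ref{3-alpha} gives
\[
w=\zeta\cdot F+\sum_j d_jH_{j,\alpha}+g,
\]
with $\zeta_i(\tau)=B_ie^{\lambda_1\tau}$, where $B_i=-\int\xi_iw_0$ is a linear combination of $b_{12}-b_{21}$ and $b_{kk}$. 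By Corollary \ref{local invariant}, $\|g(\tau)\|_{m\alpha}\le Ce^{-\mu\tau}$ with $\mu>-\lambda_2=\frac2\alpha-1$. The velocity of an $F$-mode contributes order $(1+t)^{-\frac1{2\alpha}}$ to $|\mathbf u|_2$. An $H$-mode contributes exactly $(1+t)^{-1+\frac1\alpha+\lambda_2}=(1+t)^{-\frac1\alpha}$. The remainder is $o(t^{-1/\alpha})$, using the weighted Biot--Savart bound from \cite[Proposition B.1]{TC 02} as in Corollary \ref{2alpha-coro}. Hence $t^{1/\alpha}|\mathbf u(t)|_2\to0$ if and only if $B=0$ and $e^{-\lambda_2\tau}d_j(\tau)\to0$ for $j=1,2,3$. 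Here I use that $\mathbf v^{F_{i,\alpha}}$ and $\mathbf v^{H_{j,\alpha}}$ are linearly independent in $L^2$.

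\emph{Step 2: second-moment dynamics.} Following the computation for $\zeta_i$ in Theorem \ref{3-alpha}, I take the moments $\int\xi_k\xi_lw$. For $\mathcal L_\alpha$ one has $\int\xi_k\xi_l(-\Delta)^\alpha w=0$ for $\alpha<1$, because the symbol $|p|^{2\alpha}$ has vanishing second derivatives at $0$ in the distributional sense once $w$ has mean zero. The dilation term then gives
\[
\frac{d}{d\tau}\int\xi_k\xi_lw=\lambda_2\int\xi_k\xi_lw+\int(v_kw\,\xi_l+v_lw\,\xi_k)\,d\xi .
\]
In physical variables the forcing is the time derivative of $\int(u_ku_l)$-type quantities, since $\int x_l\,\omega u_k$ is expressible through $\int u_ku_l$ and $|u|^2\delta_{kl}$ by the 2D identity $\omega\mathbf u^{\perp}=\nabla\cdot(\mathbf u\otimes\mathbf u)-\tfrac12\nabla|\mathbf u|^2$. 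Integrating by Duhamel, $e^{-\lambda_2\tau}d(\tau)$ converges to a limit that is an explicit linear expression in $b_{kl}$ (the initial second moments of $\omega_0$ equal $b$-type first moments of $\mathbf u_0$) plus $c_{kl}$ minus $\tfrac12\delta_{kl}\sum_i c_{ii}$. Convergence of the $c_{kl}$ integrals uses $|\mathbf u(t)|_2^2\lesssim(1+t)^{-1/\alpha}$ with $1/\alpha>1$. The $H_{1,\alpha}=\Delta G_\alpha$ coefficient is the trace, and its velocity field vanishes identically. It therefore does not enter $|\mathbf u|_2$; this is exactly why an arbitrary multiple $c\delta_{kl}$ is allowed. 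The $H_2$ and $H_3$ limits are the traceless part of the resulting matrix.

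\emph{Step 3: assembling the equivalence, and the main obstacle.} If \eqref{cond} holds, then $B=0$ and the traceless limit vanishes, so Step 1 gives the decay; conversely, a nonzero limit produces an exact $t^{-1/\alpha}$ profile. The main obstacle is Step 2, in two places. First, one must justify the moment identities for the nonlocal operator; $(-\Delta)^\alpha$ applied to $\xi_k\xi_l$ is not a tempered polynomial, so I would work on the Fourier side with $\hat w\in C^2$ near $0$. Second, one must control the remainder $g$ uniformly in the second moments, via $L^2(m\alpha)\hookrightarrow$ weight $|\xi|^2$ in $L^1$, which needs $m\alpha>3$. I would try to obtain this margin from $\alpha>3/4$ and $m=4-\varepsilon$, with $\varepsilon<4-3/\alpha$.
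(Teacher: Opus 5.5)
Your overall architecture matches the paper's: work in $L^2((4-\varepsilon)\alpha)$ with $m\alpha>3$, derive the second-moment equations, integrate them by Duhamel, and read off $t^{1/\alpha}|\mathbf u(t)|_2\to|\mathbf V|_2$. However, you misplace where $b_{kl}$ and $\sum_i c_{ii}$ enter, and this breaks the implication ``decay $\Rightarrow b_{kl}=0$''.

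\textbf{First error.} $B_i=-\int\xi_i w_0\,d\xi$ is not a combination of the $b_{kl}$. Integrating by parts gives $\int x_1\omega_0\,dx=-\int(\mathbf u_0)_2\,dx$ and $\int x_2\omega_0\,dx=\int(\mathbf u_0)_1\,dx$. Moreover, $\int\mathbf u_0\,dx=0$ holds automatically for divergence-free $\mathbf u_0\in L^1$, since $p\cdot\hat{\mathbf u}_0(p)=0$ and $\hat{\mathbf u}_0$ is continuous. So the $G_\alpha$- and $F_{i,\alpha}$-modes are absent from the start, which is why the paper restricts to the invariant subspace \eqref{4.43}. The condition $B=0$ therefore carries no information about $b$.

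\textbf{Second error.} The field $\mathbf v^{H_{1,\alpha}}=\Delta\mathbf v^{G_\alpha}=\nabla^\perp G_\alpha$ is not zero: its Fourier transform is $i p^\perp e^{-|p|^{2\alpha}}$. Your claim also contradicts the linear independence you invoke in Step 1. In fact all of the $b$-information sits in the $H_{1,\alpha}$-coefficient. The hypotheses $\nabla\cdot\mathbf u_0=0$ and $(1+|x|)\mathbf u_0\in L^1$ force $b_{kl}+b_{lk}=0$. Hence $\int(x_1^2-x_2^2)\omega_0=-2(b_{12}+b_{21})=0$ and $\int x_1x_2\omega_0=b_{11}-b_{22}=0$, whereas $\int|x|^2\omega_0=2(b_{12}-b_{21})=4b_{12}$. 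So the $H_{2,\alpha},H_{3,\alpha}$ limits contain no $b$ at all. If the $H_{1,\alpha}$-mode really were invisible in $|\mathbf u|_2$, decay would only give that the traceless part of $(c_{kl})$ vanishes, and nothing would force $b_{12}=0$.

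\textbf{The correct mechanism.} The freedom $c\,\delta_{kl}$ comes from the fact that the trace of the quadratic forcing vanishes identically: $J_2(|\xi|^2)=2\int(\xi\cdot\mathbf v)w\,d\xi=0$, the 2D Biot--Savart identity of \cite[Lemma 4.9]{TC 02}. Consequently:
\begin{itemize}
\item $\gamma_1(\tau)=\gamma_1(0)e^{\lambda_2\tau}$ with $\gamma_1(0)=\tfrac14\int|\xi|^2w_0=b_{12}$;
\item $\sum_i c_{ii}$ never appears in any coefficient;
\item the $H_{2,\alpha},H_{3,\alpha}$ limits are multiples of $c_{12}$ and $c_{11}-c_{22}$.
\end{itemize}

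For the converse you then need the three fields $\mathbf v^{H_{j,\alpha}}$ to be linearly independent in $L^2$. This holds because their Fourier transforms are $i p^\perp|p|^{-2}Q_j(p)e^{-|p|^{2\alpha}}$ with $Q_j\in\{|p|^2,\,p_1^2-p_2^2,\,p_1p_2\}$. Then $|\mathbf V|_2=0$ gives $c_1=c_2=c_3=0$, i.e.\ $b_{kl}=0$ and $c_{kl}=c\,\delta_{kl}$. In the forward direction you likewise have to check that the $H_{1,\alpha}$ limit vanishes; it does, because it equals $b_{12}$.

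\textbf{A separate point.} Smallness of $\omega_0$ in $L^2((4-\varepsilon)\alpha)$ has to be part of the hypothesis ``small solution'', as the paper tacitly assumes. It does not follow from $(1+|x|)\mathbf u_0\in L^1$ after a short smoothing time, because $e^{\tau\mathcal L_\alpha}$ does not improve algebraic spatial tails.
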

    \begin{proof}
        If $\textbf{u}_{0}\in L^{1}(\mathbb{R}^{2})^{2}$, by \cite[Corollary B.4]{TC 02}, it is equivalent to 
        \begin{align}\label{4.43}
            \int_{\mathbb{R}^{2}} w(\xi)\mathrm{d}\xi=0,\quad \int_{\mathbb{R}^{2}}\xi_{1}w(\xi)\mathrm{d}\xi=0,\quad \int_{\mathbb{R}^{2}}\xi_{2}w(\xi)\mathrm{d}\xi=0.
        \end{align}
        Then, we study the solution of the vorticity equation \eqref{nve} in the invariant subspace of $L^2((4-\varepsilon)\alpha)$ defined by \eqref{4.43}. If $w(\xi,\tau)$ is such a solution and if $\textbf{v}(\xi,\tau)$ is the velocity field obtained from $w(\xi,\tau)$ via the Biot-Savart law \eqref{nbs}, then $w$ and $\textbf{v}$ can be decomposed as follows:
        \begin{align}
            w(\xi,\tau)&=\gamma_{1}(\tau)H_{1,\alpha}(\xi)+\gamma_{2}(\tau)H_{2,\alpha}(\xi)+\gamma_{3}(\tau)H_{3,\alpha}(\xi)+R(\xi,\tau),\notag\\
            \textbf{v}(\xi,\tau)&=\gamma_{1}(\tau)\textbf{v}^{H_{1,\alpha}}(\xi)+\gamma_{2}(\tau)\textbf{v}^{H_{2,\alpha}}(\xi)+\gamma_{3}(\tau)\textbf{v}^{H_{3,\alpha}}(\xi)+R(\xi,\tau).
        \end{align}
        Next we compute the coefficients $\gamma_j$ ($j=1,2,3$) via second moments.  For any homogeneous quadratic polynomial $p(\xi)$, set
        \begin{align*}
            M_p(\tau):=\int_{\mathbb R^2} p(\xi)\,w(\xi,\tau)\,\mathrm{d}\xi .
        \end{align*}
        Differentiating and using \eqref{nve} we obtain
        \begin{align*}
            \dot M_p &=\int_{\mathbb R^2} p\,\partial_\tau w\,\mathrm{d}\xi =\underbrace{\int_{\mathbb R^2} p\,\mathcal L_\alpha w\,\mathrm{d}\xi}_{J_1} +\underbrace{\int_{\mathbb R^2} -p\,(\mathbf v\!\cdot\nabla)w\,\mathrm{d}\xi}_{J_2}.
        \end{align*}
        For $J_1$, by duality and integration by parts,
        \begin{align*}
            J_1&=-\int_{\mathbb R^2} p\,(-\Delta)^\alpha w\,\mathrm{d}\xi +\frac1{2\alpha}\int_{\mathbb R^2} p\,(\xi\!\cdot\nabla w)\,\mathrm{d}\xi +\int_{\mathbb R^2} p\,w\,\mathrm{d}\xi \\ &=-\int_{\mathbb R^2} w\,(-\Delta)^\alpha p\,\mathrm{d}\xi +\frac1{2\alpha}\Bigl(-\int_{\mathbb R^2}\nabla\!\cdot(\xi p)\,w\,\mathrm{d}\xi\Bigr) +M_p \\ 
            &=-\int_{\mathbb R^2} w\,(-\Delta)^\alpha p\,\mathrm{d}\xi +\Bigl(1-\frac2\alpha\Bigr)M_p ,
        \end{align*}
        since $\nabla\!\cdot(\xi p)=(2+\deg p)\,p=4p$ for quadratic $p$.  For the three basis polynomials
        \begin{align*}
            p_1(\xi)=|\xi|^2,\qquad p_2(\xi)=\xi_1^2-\xi_2^2,\qquad p_3(\xi)=\xi_1\xi_2,
        \end{align*}
        one has
        \begin{align*}
            (-\Delta)^\alpha p_2\equiv 0,\qquad (-\Delta)^\alpha p_3\equiv 0,\qquad (-\Delta)^\alpha p_1\equiv C_\alpha,
        \end{align*}
        hence (using $\int_{\mathbb R^2}w\,\mathrm{d}\xi=0$ for \eqref{nve})
        \begin{align*}
            \dot M_{p_\ell}=\Bigl(1-\frac2\alpha\Bigr)M_{p_\ell}+J_2(p_\ell),\qquad \ell=1,2,3.
        \end{align*}
        Moreover,
        \begin{align*}
            J_2(p) &=-\int_{\mathbb R^2} p\,(\mathbf v\!\cdot\nabla)w\,\mathrm{d}\xi =\int_{\mathbb R^2}\nabla p\cdot(\mathbf v\,w)\,\mathrm{d}\xi,
        \end{align*}
        so that
        \begin{align*}
            J_2(p_1)=2\int_{\mathbb R^2}(\xi\!\cdot\mathbf v)\,w\,\mathrm{d}\xi,\qquad
            J_2(p_2)=2\int_{\mathbb R^2}(\xi_1 v_1-\xi_2 v_2)\,w\,\mathrm{d}\xi,\qquad
            J_2(p_3)=\int_{\mathbb R^2}(\xi_2 v_1+\xi_1 v_2)\,w\,\mathrm{d}\xi.
        \end{align*}
        Define
        \begin{align*}
            \gamma_1(\tau)=\frac14 M_{p_1},\qquad
            \gamma_2(\tau)=\frac14 M_{p_2},\qquad
            \gamma_3(\tau)=\frac12 M_{p_3}.
        \end{align*}
        Then
        \begin{align*}
            \dot\gamma_1=\Bigl(1-\frac2\alpha\Bigr)\gamma_1+\frac14 J_2(p_1),\qquad
            \dot\gamma_2=\Bigl(1-\frac2\alpha\Bigr)\gamma_2+\frac14 J_2(p_2),\qquad
            \dot\gamma_3=\Bigl(1-\frac2\alpha\Bigr)\gamma_3+\frac12 J_2(p_3).
        \end{align*}

        According to \cite[Lemma 4.9]{TC 02}, we get 
        \begin{gather*}
            J_{2}(|\xi|^{2})=0,\qquad J_{2}(\xi_{1}^{2}-\xi_{2}^{2})=-4\int_{\mathbb{R}^{2}}v_{1}v_{2}\mathrm{d}\xi,\\
            J_{2}(\xi_{1}\xi_{2})=\int_{\mathbb{R}^{2}}(v_{1}^{2}-v_{2}^{2})\mathrm{d}\xi.
        \end{gather*}
        Then
        \begin{align*}
            \dot{\gamma}_{1}=(1-\tfrac{2}{\alpha})\gamma_{1},\
            \dot{\gamma}_{2}=(1-\tfrac{2}{\alpha})\gamma_{2}-\int_{\mathbb{R}^{2}}v_{1}v_{2}\mathrm{d}\xi,\
            \dot{\gamma}_{3}=(1-\tfrac{2}{\alpha})\gamma_{3}+\frac{1}{2}\int_{\mathbb{R}^{2}}v_{1}^{2}-v_{2}^{2}\mathrm{d}\xi.
        \end{align*}
        Now, let $E_{\text{d}}=\text{span}\{H_{1,\alpha},H_{2,\alpha},H_{3,\alpha}\}$, and let $E_{\text{c}}\subset L^{2}(m\alpha)$ be the spectral subspace of $\mathcal{L}_{\alpha}$ corresponding to the other spectrum $\sigma_{c}=\{\lambda\in \mathbb{C}\ |\ \Re \lambda\leq-\left(\frac{m}{2}+\frac{1}{2\alpha}-1\right)\}$. Given $\frac{2}{\alpha}-1 <\mu<1-\frac{\varepsilon}{2}+\frac{1}{2\alpha}$, Theorem \ref{loc-strong} (with $k=2$, $3/4<\alpha<1$ and $m=4-\varepsilon$) shows that, for sufficiently small $r_{1}>0$, the set $W_{\text{c}}^{\text{low}}$ defined by \eqref{local-fast-estimate} is a infinite-dimensional manifold which is tangent to $E_{\text{c}}$ at the origin. Hence, Fix $\frac{2}{\alpha}-1<\mu<\frac{2\alpha+1}{2\alpha}$, by conducting an analysis similar to that in the previous subsection and by changing variables to self-similar coordinates \eqref{v-u}, we can obtain
        \begin{align}\label{4alpha}
            \|w(\cdot,\tau)-(c_{1}H_{1,\alpha}+c_{2}H_{2,\alpha}+c_{3}H_{3,\alpha})e^{\lambda_{2}\tau}\|_{m\alpha}\leq Ce^{-\mu\tau}.
        \end{align}
        here 
        \begin{align*}
            c_{1}&=\gamma_{1}(0),\\
            c_{2}&=\gamma_{2}(0)-\int_{0}^{\infty}e^{(\tfrac{2}{\alpha}-1)\tau}\int_{\mathbb{R}^{2}}v_{1}(\xi,\tau)v_{2}(\xi,\tau)\mathrm{d}\xi\mathrm{d}\tau\equiv \gamma_{2}(0)-c_{12},\\
            c_{3}&=\gamma_{3}(0)+\frac{1}{2}\int_{0}^{\infty}e^{(\tfrac{2}{\alpha}-1)\tau}\int_{\mathbb{R}^{2}}(v_{1}^{2}(\xi,\tau)-v_{2}^{2}(\xi,\tau))\mathrm{d}\xi\mathrm{d}\tau\equiv \gamma_{3}(0)+\frac{1}{2}(c_{11}-c_{22}).
        \end{align*}
        where $c_{kl}$ is defined in \eqref{ckl}. Since the assumption $(1+|x|)\textbf{u}_{0}\in L^{1}(\mathbb{R}^{2})^{2}$, by \cite[Corollary B.4]{TC 02}, we know that $\gamma_{2}(0)=\gamma_{3}(0)=0$. Combined with \eqref{ckl} and \cite[Corollary B.5]{TC 02}, we know that \eqref{cond} is equivalent to 
        \begin{align*}
            c_{1}=c_{2}=c_{3}=0.
        \end{align*}
        Hence, $w_{0}$ lies in the strong stable manifold $W_{\text{c}}^{\text{loc}}$. By Theorem \ref{loc-strong}, we have
        \begin{align*}
            \lim\limits_{\tau\to \infty} e^{(\tfrac{2}{\alpha}-1)\tau}\|w(\cdot,\tau)\|_{m\alpha}=0.
        \end{align*}
        Moreover, we have proved that 
        \begin{align*}
            \lim\limits_{t\to \infty}t^{1/\alpha}|\textbf{u}(\cdot,t)|_{2}=0.
        \end{align*}
        \medskip
        On the other hand,  Assume that
        \begin{equation}\label{decay-ass}
            \lim_{t\to\infty} t^{\frac{1}{\alpha}}|\mathbf u(\cdot,t)|_2 = 0.
        \end{equation}
        We show that \eqref{cond} holds. First, we know that
        \begin{equation}\label{u-v-rel}
            |\mathbf u(\cdot,t)|_2 = (1+t)^{-1+\frac{1}{\alpha}}\,
            |\mathbf v(\cdot,\tau)|_2.
        \end{equation}
        By the spectral decomposition in $L^2(m\alpha)$ and Theorem \ref{loc-strong}, we already know that there exist constants $c_1,c_2,c_3\in\mathbb{R}$ and some $\frac{2}{\alpha}-1<\mu\leq\frac{2\alpha+1}{2\alpha}$ such that
        \begin{equation}\label{w-asym}
            \bigl\|w(\cdot,\tau)-(c_1H_{1,\alpha}+c_2H_{2,\alpha}+c_3H_{3,\alpha})e^{\lambda_2\tau}\bigr\|_{m\alpha}  \le C e^{-\mu\tau},\qquad \tau\ge0,
        \end{equation}
        where $\lambda_2=1-\tfrac{2}{\alpha}$. Since the Biot--Savart operator is bounded from $L^2(m\alpha)$ to $L^2(\mathbb{R}^2)^2$, \eqref{w-asym} implies that
        \begin{equation}\label{v-asym}
            \bigl|\mathbf v(\cdot,\tau)-e^{\lambda_2\tau}\mathbf V\bigr|_2\le C e^{-\mu\tau},\qquad \tau\ge0,
        \end{equation}
        where
        \begin{align*}
            \mathbf V:= c_1\,\mathbf v^{H_{1,\alpha}}+ c_2\,\mathbf v^{H_{2,\alpha}} + c_3\,\mathbf v^{H_{3,\alpha}}\in L^2(\mathbb{R}^2)^2.
        \end{align*}
        Combining \eqref{u-v-rel} and \eqref{v-asym}, and using $1+t=e^{\tau}$, we obtain
        \begin{align*}
            |\mathbf u(\cdot,t)|_2 = (1+t)^{-1+\frac{1}{\alpha}}\Bigl( (1+t)^{\lambda_2}|\mathbf V|_2 + O((1+t)^{-\mu})    \Bigr).
        \end{align*}
        Multiplying by $t^{1/\alpha}\sim s^{1/\alpha}$ as $t\to\infty$, we get
        \begin{align*}
            t^{\frac{1}{\alpha}}|\mathbf u(\cdot,t)|_2= |\mathbf V|_2 + o(1),
        \end{align*}
        because $\lambda_2-1+\tfrac{2}{\alpha}=0$ and $\mu>\tfrac{2}{\alpha}-1$. Hence
        \begin{equation}\label{limit-V}
            \lim_{t\to\infty} t^{\frac{1}{\alpha}}|\mathbf u(\cdot,t)|_2 = |\mathbf V|_2.
        \end{equation}
        By the decay assumption \eqref{decay-ass}, the left-hand side of
        \eqref{limit-V} is zero, so we conclude that $|\mathbf V|_2=0$,
        which implies
        \begin{align*}
            c_1=c_2=c_3=0.
        \end{align*}
        Finally, as explained above (using \cite[Corollary B.4--B.5]{TC 02}), our moment assumption $(1+|x|)\mathbf u_0\in L^1(\mathbb{R}^2)^2$ implies that
        \begin{align*}
             c_1=c_2=c_3=0\quad\Longleftrightarrow\quad b_{kl}=0\ \text{and}\ c_{kl}=c\delta_{kl},\ k,l=1,2,
        \end{align*}
        that is, condition \eqref{cond} holds. This proves the converse implication and completes the proof of the theorem.
    \end{proof}

    \begin{remark}
        The above argument follows the same scheme as in \cite[Theorem 4.17]{TC 02}.  We emphasize that all identities coming from Appendix B of \cite{TC 02} (in particular the formulas for $J_{2}(p_i)$, the relations between $\gamma_j$ and the velocity moments, and the characterization of $b_{kl}$ and $c_{kl}$) are purely elliptic: their derivation uses only the Biot--Savart law, the divergence-free condition and suitable decay assumptions, and does not involve the time evolution or the specific form of the dissipative operator. Since these ingredients are unchanged in our fractional setting, the computations in Appendix B of \cite{TC 02} apply verbatim for $3/4<\alpha<1$, and we use them here without repeating the details.
    \end{remark}

    \begin{appendix}
    \section{Appendix A. The vorticity equation after scaling transformation}

    In order to find suitable scaling transformation, we define 
    \begin{align*}
        \xi=\frac{x}{(1+t)^{\beta}},\quad \tau=\text{log}(1+t).
    \end{align*}
    Moreover, we assume 
    \begin{align}
        \omega(x,t)=\frac{1}{(1+t)^{\gamma}}w(\xi,\tau),\label{A.1}\\
        \textbf{u}(x,t)=\frac{1}{(1+t)^{\delta}}\textbf{v}(\xi,\tau)\label{A.2},
    \end{align}
    where $\beta$, $\gamma$ and $\delta$ are undetermined coefficients. Our goal is to find these coefficients and to calculate the vorticity equation after scaling transformation.\\
    First, using the Biot--Savart law, we have
    \begin{align*}
        \mathbf{u}(x,t)&=\frac{1}{2\pi}\int_{\mathbb{R}^{2}}\frac{(x-y)^{\perp}}{|x-y|^{2}}\omega(y,t)\mathrm{d}y.
    \end{align*}
    By performing the change of variables $y=(1+t)^{\beta}\eta$ and noting that $x=(1+t)^{\beta}\xi$, we substitute these into the integral
    \begin{align*}
        \mathbf{u}(x,t)&=\frac{1}{2\pi}\int_{\mathbb{R}^{2}}\frac{((1+t)^{\beta}(\xi-\eta))^{\perp}}{|(1+t)^{\beta}(\xi-\eta)|^{2}}\frac{1}{(1+t)^{\gamma}}w(\eta,\tau)\cdot (1+t)^{2\beta}\,\mathrm{d}\eta\\
        &=\frac{(1+t)^{\beta}}{(1+t)^{2\beta}}\frac{(1+t)^{2\beta}}{(1+t)^{\gamma}}\frac{1}{2\pi}\int_{\mathbb{R}^{2}}\frac{(\xi-\eta)^{\perp}}{|\xi-\eta|^{2}}w(\eta,\tau)\,\mathrm{d}\eta\\
        &=(1+t)^{\beta-\gamma}\mathbf{v}(\xi,\tau).
    \end{align*}
    We obtain the first relation
\begin{align}\label{A.3}
        \gamma-\beta=\delta.
    \end{align}
    Next, we substitute \eqref{A.1} and \eqref{A.2} into \eqref{ve}, for $\partial_{t}\omega$ term, we have
    \begin{align}
        \partial_{t}\omega&=\partial_{t}\bigg(\frac{1}{(1+t)^{\gamma}}w(\xi,\tau)\bigg)\notag\\
        &=-\frac{\gamma}{(1+t)^{1+\gamma}}w+\frac{1}{(1+t)^{\gamma}}\bigg(\partial_{\tau}w \frac{d \tau}{d t}+\nabla_{\xi}w\cdot \frac{\partial \xi}{\partial t}\bigg)\notag\\
        &=-\frac{\gamma}{(1+t)^{1+\gamma}}+\frac{1}{(1+t)^{1+\gamma}}\partial_{\tau}w-\frac{\beta}{(1+t)^{1+\gamma}}\xi\cdot \nabla_{\xi}w.\label{A.4}
    \end{align}
    For nonlinear term $(\textbf{u}\cdot \nabla_{x})\omega=(\frac{1}{(1+t)^{\delta}}\textbf{v}\cdot\nabla_{x})(\frac{1}{(1+t)^{\gamma}}w)$, we have $\nabla_{x}=\frac{1}{(1+t)^{\beta}}\nabla_{\xi}$ and 
    \begin{align}\label{A.5}
        (\boldsymbol{u}\cdot \nabla_{x})\omega=\frac{1}{(1+t)^{\gamma+\delta+\beta}}(\textbf{v}\cdot\nabla_{\xi})w.
    \end{align}
    For fractional Laplace operator $(-\Delta_{x})^{\alpha}$, we consider it Fourier definition, $(-\nabla_{x})^{\alpha}\omega(x,t)=\mathcal{F}^{-1}[|k|^{\alpha}\mathcal{F}(\omega)(k)](x)$. Substitute \eqref{A.1} into the $\mathcal{F}[\omega](k)$,
    \begin{align*}
        \mathcal{F}[\omega](k)=\frac{1}{(1+t)^{\gamma}}\int_{\mathbb{R}^{2}}e^{-ikx}w\bigg(\frac{x}{(1+t)^{\beta}}\bigg)\mathrm{d}x.
    \end{align*}
    Since $\xi=\frac{x}{(1+t)^{\beta}}$, $x=(1+t)^{\beta}\xi$ and $\mathrm{d}x=(1+t)^{2\beta}\mathrm{d}\xi$, we have
    \begin{align*}
        \mathcal{F}[\omega](k)&=\frac{1}{(1+t)^{\gamma}}(1+t)^{2\beta}\int_{\mathbb{R}^{2}}e^{-ik(1+t)^{\beta}\xi}w(\xi)\mathrm{d}\xi\\
        &=(1+t)^{2\beta-\gamma}\mathcal{F}[w]((1+t)^{\beta}k).
    \end{align*}
    $(-\Delta_{x})^{\alpha}\omega$ can rewrite as
    \begin{align}
        (-\Delta_{x})^{\alpha}(\omega)&=\mathcal{F}^{-1}(|k|^{2\alpha}(1+t)^{2\beta-\gamma}\mathcal{F}[w]((1+t)^{\beta}k))(x)\notag\\
        &=(1+t)^{2\beta-\gamma}\frac{1}{(2\pi)^{2}}\int_{\mathbb{R}^{2}}e^{ikx}|k|^{2\alpha}\mathcal{F}[w]((1+t)^{\beta}k)\mathrm{d}k\notag\\
        &=(1+t)^{2\beta-\gamma}\mathcal{F}^{-1}\bigg(\bigg(\frac{|q|}{(1+t)^{\beta}}\bigg)^{2\alpha}\mathcal{F}[w](q)\bigg)\frac{1}{(1+t)^{2\beta}}\notag\\
        &=\frac{1}{(1+t)^{\gamma+2\alpha\beta}}(-\Delta_{\xi})^{\alpha}w(\xi),\label{A.6}
    \end{align}
    where the third equality uses the following variable substitution $q=(1+t)^{\beta}k$, $k=q/(1+t)^{\beta}$ and $\mathrm{d}k=\frac{\mathrm{d}q}{(1+t)^{2\beta}}$. To eliminate the coefficient of each term, we combine it with\eqref{A.4}, \eqref{A.5} and \eqref{A.6}. The exponents must satisfy the following balance condition:
    \begin{align}
        \gamma+1=\gamma+\delta+\beta=\gamma+2\alpha\beta. \label{A.7}
    \end{align}
    Solving this system yields $\beta=\frac{1}{2\alpha}$, $\delta=1-\frac{1}{2\alpha}$. According to \eqref{A.3}, we know that $\gamma=1$. Moreover, we have $\gamma=1$, $\beta=\frac{1}{2\alpha}$, $\delta=1-\frac{1}{2\alpha}$. So $\xi=\frac{x}{(1+t)^{1/2\alpha}}$, $\tau=\text{log}(1+t)$ and 
    \begin{align*}
        \omega(x,t)=\frac{1}{(1+t)}w(\xi,\tau),\ 
        \textbf{u}(x,t)=\frac{1}{(1+t)^{1-\frac{1}{2\alpha}}}\textbf{v}(\xi,\tau).
    \end{align*}
    Finally, we obtain the vorticity equation 
    \begin{align*}
        \partial_{\tau}w=-(-\Delta)^{\alpha}w+\frac{1}{2\alpha}(\xi\cdot\nabla_{\xi})w+w-(\textbf{v}\cdot\nabla_{\xi})w.
    \end{align*}

\section{Appendix B. Spectrum of the linear operator}
    In this section, we investigate the spectrum of the linear operator $\mathcal{L}_{\alpha}$ defined in \eqref{linear}. This operator is the linear part obtained from the vorticity equation \eqref{ve} through a scaling transformation. As in the integer-order case, we study the operator on $\mathbb{R}^{N}$ rather than just on $\mathbb{R}^{2}$, and so we assume that $N\in \mathbb{N}$, $N\geq 1$. We discuss the linear operator $\mathcal{L}_{\alpha}$ given by   
    \begin{align}\label{Nlin}
        \mathcal{L}_{\alpha}=-(-\Delta_{\xi})^{\alpha}+\frac{1}{2\alpha}(\boldsymbol{\xi}\cdot\nabla_{\xi})+\frac{N}{2},\quad \boldsymbol{\xi}\in\mathbb{R}^{N}.
    \end{align} 
    We investigate in the weighted space $L^{2}(m\alpha)$ defined by 
    \begin{align}
        &L^{2}(m\alpha)=\{f\in L^{2}(\mathbb{R}^{N})| \|f\|_{m,\alpha}<\infty\},\\
        &\|f\|_{m\alpha}=\bigg(\int_{\mathbb{R}^{N}}(1+|
        \xi|^{2})^{m\alpha}|f(\xi)|^{2}\mathrm{d}\xi\bigg)^{\frac{1}{2}}.\notag
    \end{align}
    Our convention for Fourier transform is
    \begin{align}
        &\hat{f}(p)=\mathcal{F}[f](p)=\int_{\mathbb{R}^{N}}f(\xi)\text{exp}(-i\boldsymbol{p}\cdot\boldsymbol{\xi})\mathrm{d}\xi,\\
        &f(\xi)=\mathcal{F}^{-1}[\hat{f}](\xi)=\frac{1}{(2\pi)^{N}}\int_{\mathbb{R}^{N}}\hat{f}(p)\text{exp}(i\boldsymbol{p}\cdot\boldsymbol{\xi})\mathrm{d}p.
    \end{align}
    Since the Fourier transform turns the operator $\mathcal{L}_{\alpha}$ into a first-order differential operator, we study its spectral distribution in the Fourier space. Consider that the Fourier form of $\mathcal{L}_{\alpha}$ is 
    \begin{align}\label{Fourier}
        \widehat{\mathcal{L}_{\alpha}}=-|p|^{2\alpha}-\frac{1}{2\alpha}(\boldsymbol{p}\cdot\nabla_{p})+\frac{N}{2}(1-\frac{1}{\alpha}).
    \end{align}

    The aim of this section is to prove the following result, which underlies our approach for computing the long-time asymptotics of the fractional vorticity equation:
    \begin{theorem}\label{sp}
        Fix $m\geq 0$, and let $\mathcal{L}_{\alpha}$ be the linear operator \eqref{Nlin} in $L^{2}(m\alpha)$, defined on its maximal domain. Then the spectrum of $\mathcal{L}_{\alpha}$ is 
        \begin{align*}
            \sigma(\mathcal{L}_{\alpha})=\bigg\{\lambda\in \mathbb{C}\ \bigg|\  \Re(\lambda)\leq \frac{2\alpha N-N-2m\alpha}{4\alpha}\bigg\}\cup \bigg\{-\frac{k}{2\alpha}+\frac{N}{2}\bigg(1-\frac{1}{\alpha}\bigg)\ \bigg|\ k\in \mathbb{N}\bigg\}.
        \end{align*}
        Moreover, if $(\alpha-\frac{1}{2})N<m\alpha<k+2\alpha+\frac{N}{2}$ and if $k\in \mathbb{N}$ satisfies $k+\frac{N}{2}<m\alpha$, then $\lambda_{k}=-\frac{k}{2\alpha}+\frac{N}{2}(1-\frac{1}{\alpha})$ is an isolated eigenvalue of $\mathcal{L}_{\alpha}$, with multiplicity $\dbinom{N + k - 1}{k}$.
    \end{theorem}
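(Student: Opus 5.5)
Following the Gallay--Wayne strategy, I will work entirely in Fourier variables, where \eqref{Fourier} turns $\mathcal L_\alpha$ into a first-order transport operator with a potential. The first step is to identify the Fourier image of $L^2(m\alpha)$. For integer $m\alpha$ it is the Sobolev space $H^{m\alpha}(\mathbb R^N_p)$; for general $m\alpha$ it is the corresponding Bessel-potential space.

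The second step is to write down the semigroup explicitly by the method of characteristics:
\[
\widehat{S^\alpha(\tau)f}(p)=e^{\frac N2(1-\frac1\alpha)\tau}\exp\!\Big(-a_\alpha(\tau)|p|^{2\alpha}\Big)\hat f\big(e^{-\tau/2\alpha}p\big),\qquad a_\alpha(\tau)=1-e^{-\tau}.
\]
This follows from $\frac{d}{d\tau}|e^{-\tau/2\alpha}p|^{2\alpha}=-e^{-\tau}|p|^{2\alpha}$. With this formula in hand, the rest of the theorem reduces to norm estimates on $H^{m\alpha}$.

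\emph{Eigenvalues.} For each multi-index $\beta$ with $|\beta|=k$, I will check directly that $\partial^\beta G_\alpha$, with Fourier transform $(ip)^\beta e^{-|p|^{2\alpha}}$, satisfies
\[
\widehat{\mathcal L_\alpha}\big[(ip)^\beta e^{-|p|^{2\alpha}}\big]=\Big(-\tfrac{k}{2\alpha}+\tfrac N2\big(1-\tfrac1\alpha\big)\Big)(ip)^\beta e^{-|p|^{2\alpha}}.
\]
The $|p|^{2\alpha}$ terms cancel because $p\cdot\nabla_p|p|^{2\alpha}=2\alpha|p|^{2\alpha}$. The function $p^\beta e^{-|p|^{2\alpha}}$ is only $C^{2\alpha}$-singular at $p=0$, so it lies in $H^{m\alpha}$ exactly when $m\alpha<k+2\alpha+\frac N2$. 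This is the upper restriction in the statement, and it gives the multiplicity $\binom{N+k-1}{k}$ as the number of degree-$k$ monomials.

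\emph{Continuous spectrum.} I will show that every $\lambda$ with $\Re\lambda<\frac{2\alpha N-N-2m\alpha}{4\alpha}$ is an eigenvalue. I will look for functions homogeneous near $p=0$, namely $\hat f(p)=|p|^{\gamma}Y(p/|p|)e^{-|p|^{2\alpha}}$ with $\gamma=-2\alpha(\lambda-\frac N2(1-\frac1\alpha))$. Such a function solves $\widehat{\mathcal L_\alpha}\hat f=\lambda\hat f$, and it belongs to $H^{m\alpha}$ when $\Re\gamma>m\alpha-\frac N2$, which is the stated half-plane. Closedness of the spectrum then gives the closed half-plane.

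\emph{Resolvent.} Conversely, for $\lambda$ to the right of the line and not one of the $\lambda_k$, I will construct the resolvent by the Laplace transform $\int_0^\infty e^{-\lambda\tau}S^\alpha(\tau)\,d\tau$. This requires first splitting off the Taylor polynomial of $\hat f$ at $p=0$ up to order $k<m\alpha-\frac N2$. That order is where the hypothesis $k+\frac N2<m\alpha$ enters, and it is also why the left bound $(\alpha-\frac12)N<m\alpha$ is needed.

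The main obstacle is the remainder estimate in the resolvent step. After subtracting the spectral projection onto the finitely many eigenmodes, which I will write as Taylor coefficients, i.e.\ moments of $f$, against $(ip)^\beta e^{-|p|^{2\alpha}}$, I need
\[
\|S^\alpha(\tau)Q f\|_{m\alpha}\le Ce^{(\frac{2\alpha N-N-2m\alpha}{4\alpha}+\epsilon)\tau}\|f\|_{m\alpha}.
\]
The rate comes from the dilation $\hat f\mapsto\hat f(e^{-\tau/2\alpha}\cdot)$. On the homogeneous $\dot H^{m\alpha}$ seminorm this dilation scales like $e^{-\frac{\tau}{2\alpha}(m\alpha-\frac N2)}$, and the factor $e^{\frac N2(1-\frac1\alpha)\tau}$ multiplies it. The difficulty is that the multiplier $e^{-a_\alpha|p|^{2\alpha}}$ is nonsmooth at the origin. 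To handle it, I will first try a fractional Leibniz (Kato--Ponce type) estimate, together with Hardy's inequality applied to the Taylor remainder. This is exactly the range where the condition $m\alpha<k+2\alpha+\frac N2$ keeps the multiplier's singularity harmless.
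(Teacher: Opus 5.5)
Your eigenvalue and half-plane steps follow the paper's proof. The paper also checks $\widehat{\mathcal L_\alpha}\bigl(p^\beta e^{-|p|^{2\alpha}}\bigr)=\lambda_k\,p^\beta e^{-|p|^{2\alpha}}$. It decides membership in $L^2(m\alpha)$ through the tail $|\xi|^{-(N+2\alpha+k)}$, which is equivalent to your $H^{m\alpha}$-regularity test at $p=0$. It then fills the open half-plane with the eigenfunctions $r^\gamma e^{-r^{2\alpha}}$ (your angular factor $Y$ is harmless) and takes the closure. That proves only the inclusion ``$\supseteq$''. The paper never proves the reverse inclusion, the isolatedness or the multiplicity; it only asserts them in a remark. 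So your resolvent step carries all the remaining content, and that is where the proposal has a genuine gap. This is in addition to the fact that its key estimate is only something you intend to try.

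To project out the modes $(ip)^\beta e^{-|p|^{2\alpha}}$ with $|\beta|\le k$, each of them must lie in $H^{m\alpha}$. By your own criterion this needs $m\alpha<|\beta|+2\alpha+\frac N2$ for every such $\beta$, and the binding case is $\beta=0$ (the vortex $G_\alpha$), not $|\beta|=k$. If $m\alpha\ge 2\alpha+\frac N2$, two things break:
\begin{itemize}
\item the projection does not exist;
\item $S^\alpha(\tau)$ does not map $L^2(m\alpha)$ into itself, because $\widehat{S^\alpha(\tau)f}$ contains the term $-a(\tau)e^{\lambda_0\tau}\hat f(0)|p|^{2\alpha}$ at $p=0$. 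In physical space this is a tail proportional to $(\int f)\,|\xi|^{-N-2\alpha}$. So the Laplace transform is meaningless.
\end{itemize}
Worse, the statement itself fails there. Take $\frac12<\alpha<1$ and $m\alpha>2\alpha+\frac N2$. Suppose $\hat u$ and $\widehat{\mathcal L_\alpha}\hat u$ both lie in $H^{m\alpha}\subset C^{2\alpha+\delta}$. Then none of $\hat u$, $p\cdot\nabla_p\hat u$, $\widehat{\mathcal L_\alpha}\hat u$ has a $|p|^{2\alpha}$ term in its expansion at $0$, whereas $|p|^{2\alpha}\hat u$ has the term $\hat u(0)|p|^{2\alpha}$. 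Hence $\int u=\hat u(0)=0$ on the whole maximal domain. Then $\int(\mathcal L_\alpha-\lambda)u=(\lambda_0-\lambda)\int u=0$, so $\mathcal L_\alpha-\lambda$ is onto for no $\lambda$, and $\sigma(\mathcal L_\alpha)=\mathbb C$.

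So your argument can only be run for $m\alpha<2\alpha+\frac N2$.
\begin{itemize}
\item In that range $e^{-a(\tau)|p|^{2\alpha}}$ is a uniformly bounded multiplier on $H^{m\alpha}$, so the nonsmoothness you worry about is harmless. The real work is the Hardy-type dilation estimate on the Taylor remainder.
\item The explicit Taylor-coefficient projection is also what proves the multiplicity. Exhibiting $\binom{N+k-1}{k}$ eigenfunctions only gives a lower bound.
\item Combined with $k+\frac N2<m\alpha$, this range forces $k<2\alpha$. So on the full space only $\lambda_0$ and, for $\alpha>\frac12$, $\lambda_1$ can be isolated eigenvalues.
\item For $k\ge2$ the ``moreover'' part has to be posed on the invariant subspace of data whose moments of order $<k$ vanish. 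There $m\alpha<k+2\alpha+\frac N2$ is exactly the right hypothesis, and this is the setting of Theorem \ref{opt}.
\end{itemize}
Finally, $(\alpha-\frac12)N<m\alpha$ has nothing to do with the Taylor splitting. It only says that the edge $\frac{2\alpha N-N-2m\alpha}{4\alpha}$ is negative, and it already follows from $k+\frac N2<m\alpha$ because $\alpha<1$.
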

    Before proving Theorem \ref{sp}, we observe that the operator $\mathcal{L}_{\alpha}$ is rotationally invariant. Indeed, both the fractional Laplacian $-(-\Delta_{\xi})^{\alpha}$, defined by the radial symbol $-|p|^{2\alpha}$, and the drift term $\frac{1}{2\alpha}(\boldsymbol{\xi}\cdot\nabla_{\xi})$ commute with the action of the rotation group $SO(N)$. In the Fourier frequency domain, using the relation $\boldsymbol{p}\cdot\nabla_{p}=r\partial_{r}$ with $r=|p|$, the symbol $\widehat{\mathcal{L}_{\alpha}}$ becomes a radial operator:
    \begin{align*}
        \widehat{\mathcal{L}_{\alpha}} = -r^{2\alpha} - \frac{1}{2\alpha}r\partial_r + \frac{N}{2}\left(1-\frac{1}{\alpha}\right).
    \end{align*}
    \begin{remark}
        The rotational invariance implies that the eigenspaces of $\mathcal{L}_{\alpha}$ can be decomposed using spherical harmonics. Consequently, the multiplicity of each eigenvalue is determined by the dimension of the corresponding space of spherical harmonics, exactly as in the classical Laplacian case.
    \end{remark}   
    \begin{proof}[Proof of Theorem \ref{sp}]
        Fix $k\in \mathbb{N}_0$ and let $\beta \in \mathbb{N}_0^N$ with $|\beta|=k$. Consider the function $\hat{\phi}_{\beta}(p)=p^{\beta}e^{-|p|^{2\alpha}}$. Substituting $\hat{\phi}_{\beta}$ into the spectral equation involving \eqref{Fourier}, a direct computation yields:
        \begin{align*}
            \widehat{\mathcal{L}_{\alpha}}\hat{\phi}_{\beta} &= -|p|^{2\alpha}\hat{\phi}_{\beta}-\frac{1}{2\alpha}(\boldsymbol{p}\cdot\nabla_{p})\hat{\phi}_{\beta}+\frac{N}{2}\left(1-\frac{1}{\alpha}\right)\hat{\phi}_{\beta} \\
            &= \left(-\frac{k}{2\alpha}+\frac{N}{2}\left(1-\frac{1}{\alpha}\right)\right)\hat{\phi}_{\beta}.
        \end{align*}
        Thus, $\lambda_{k}=-\frac{k}{2\alpha}+\frac{N}{2}(1-\frac{1}{\alpha})$ are the eigenvalues. To verify that the corresponding eigenfunctions $\phi_{\beta} = \mathcal{F}^{-1}[\hat{\phi}_{\beta}]$ belong to $L^{2}(m\alpha)$, we note that $\phi_{\beta}(\xi) = i^{|\beta|}\partial_{\xi}^{\beta}G_\alpha(\xi)$, where $G_\alpha$ is the fractional heat kernel. Using the asymptotic estimate $G_\alpha(\xi) \sim |\xi|^{-(N+2\alpha)}$ as $|\xi|\to\infty$ (see \cite{E.s 70}), we have $\phi_{\beta}(\xi) \sim |\xi|^{-(N+2\alpha+k)}$. The condition $\phi_{\beta}\in L^{2}(m\alpha)$ requires
        \begin{align*}
            \int_{|\xi|\ge 1} |\xi|^{2m\alpha} |\xi|^{-2(N+2\alpha+k)} |\xi|^{N-1} \mathrm{d}\xi < \infty,
        \end{align*}
        which holds if and only if $m\alpha < k+2\alpha+\frac{N}{2}$.

        To determine the continuous spectrum, we seek radial eigenfunctions $\hat{f}(p)=\hat{f}(r)$ with $r=|p|$. The spectral equation becomes a radial ODE:
        \begin{align*}
            -|p|^{2\alpha}\hat{f}-\frac{1}{2\alpha}r\partial_r \hat{f} +\frac{N}{2}\left(1-\frac{1}{\alpha}\right)\hat{f}=\lambda \hat{f}.
        \end{align*}
        Solving this first-order ODE yields the solution:
        \begin{align*}
            \hat{f}(r) = r^{\gamma}e^{-r^{2\alpha}}, \quad \text{where } \gamma = -2\alpha\left(\lambda - \frac{N}{2} + \frac{N}{2\alpha}\right).
        \end{align*}
        Since $\hat{f}$ is radial, its inverse Fourier transform $f(\xi)$ is given by the Hankel transform (see \cite{GN 44}):
        \begin{align*}
            f(\xi) = (2\pi)^{-\frac{N}{2}} \rho^{-\frac{N-2}{2}} \int_{0}^{\infty} \hat{f}(r) r^{\frac{N}{2}} J_{\frac{N-2}{2}}(r\rho) \,\mathrm{d}r, \quad \rho=|\xi|.
        \end{align*}
        Substituting the asymptotic form $\hat{f}(r)\sim r^{\gamma}$ for small $r$ and using the asymptotic properties of the Hankel transform (see \cite{GN 44}), the decay of $f(\xi)$ as $|\xi|\to\infty$ is determined by the singularity of $\hat{f}$ at the origin. Specifically, we have
        \begin{align*}
            f(\xi) \sim C |\xi|^{-(\gamma+N)}, \quad \text{as } |\xi|\to\infty.
        \end{align*}
        Substituting the expression for $\gamma$, the decay rate is $|\xi|^{2\alpha(\lambda - \frac{N}{2} + \frac{N}{2\alpha}) - N}$. Writing $\lambda = \Re \lambda + i \Im \lambda$, the condition $f \in L^2(m\alpha)$ becomes
        \begin{align*}
            \int_{|\xi|\ge 1} |\xi|^{2m\alpha} \left| |\xi|^{2\alpha(\Re \lambda - \frac{N}{2} + \frac{N}{2\alpha}) - N} \right|^2 |\xi|^{N-1} \,\mathrm{d}\xi < \infty.
        \end{align*}
        This integral converges if and only if
        \begin{align*}
            2m\alpha + 4\alpha\left(\Re \lambda - \frac{N}{2} + \frac{N}{2\alpha}\right) - 2N + N - 1 < -1.
        \end{align*}
        Simplifying this inequality yields $\Re \lambda < \frac{2\alpha N - N - 2m\alpha}{4\alpha}$. Since the spectrum is closed, this confirms the region for the continuous spectrum stated in the theorem.
    \end{proof}
    \begin{remark}
        Unlike the classical case, where the operator can be conjugated by a Gaussian weight to a self-adjoint Ornstein--Uhlenbeck operator, the fractional counterpart $\mathcal L_\alpha$ admits no such straightforward self-adjoint reduction. This necessitates the use of Fourier variables for a direct spectral analysis.
    
        Due to rotational invariance, the spectrum is determined by the radial analysis on spherical-harmonic sectors. In particular, the eigenvalue equation in Fourier space dictates that all discrete eigenfunctions are of the form $\widehat{\phi}(p)=p^{s}e^{-|p|^{2\alpha}}$ with $s\in\mathbb N^{N}$. These polynomial--exponential modes exhaust the discrete spectrum, and no further eigenvalues exist.
    \end{remark}
    



    Next, we derive the explicit representation of the semigroup $e^{\mathcal{L}_{\alpha}\tau}$. Let $w(\xi,\tau)$ be the solution to $\partial_{\tau}w=\mathcal{L}_{\alpha}w$ with initial data $w_{0}$. To solve this equation, we reduce it to the standard fractional heat equation via a time-dependent scaling transformation. We introduce the new variables
    \begin{align*}
        y = \xi e^{\tau/2\alpha}, \quad s(\tau) = e^{\tau}-1,
    \end{align*}
    and define the rescaled function $v(y,s)$ by the relation
    \begin{align}\label{trans-wv}
        w(\xi,\tau) = e^{\frac{N\tau}{2}} v(y, s(\tau)).
    \end{align}
    Substituting \eqref{trans-wv} into the equation $\partial_{\tau}w=\mathcal{L}_{\alpha}w$ and using the chain rule, a direct computation shows that the drift term and the linear growth term cancel out, and $v(y,s)$ satisfies
    \begin{align*}
        \partial_{s}v = -(-\Delta_{y})^{\alpha}v, \quad v(y,0) = w_{0}(y).
    \end{align*}
    The solution in the Fourier domain is given by $\widehat{v}(p,s) = e^{-s|p|^{2\alpha}}\widehat{w}_{0}(p)$. In physical space, this corresponds to the convolution $v(\cdot,s) = P_s * w_0$, where $P_s(x)$ is the fractional heat kernel defined by $\widehat{P}_s(p) = e^{-s|p|^{2\alpha}}$. Utilizing the self-similarity $P_s(x) = s^{-N/2\alpha} \phi_0(x s^{-1/2\alpha})$ with $\phi_0 = P_1$, we have
    \begin{align*}
        v(y,s) = s^{-N/2\alpha} \int_{\mathbb{R}^N} \phi_0\left(\frac{y-z}{s^{1/2\alpha}}\right) w_0(z) \mathrm{d}z.
    \end{align*}
    Reverting to the original variables $\xi$ and $\tau$ using \eqref{trans-wv}, and setting $a(\tau) = 1-e^{-\tau}$, we obtain the Fourier representation of the semigroup
    \begin{align}\label{group-F}
        \widehat{e^{\mathcal{L}_{\alpha}\tau}w_{0}}(k) = e^{\frac{N\tau}{2}(1-\frac{1}{\alpha})} e^{-a(\tau)|k|^{2\alpha}} \widehat{w}_{0}(k e^{-\tau/2\alpha}).
    \end{align}
    Similarly, substituting $y=\xi e^{\tau/2\alpha}$ and $s=e^\tau a(\tau)$ into the integral formula and performing the change of variables $z = \xi' e^{\tau/2\alpha}$, we arrive at the explicit integral representation
    \begin{align}\label{group-P}
        e^{\mathcal{L}_{\alpha}\tau}w_{0}(\xi)=\frac{e^{\frac{N\tau}{2}}}{a(\tau)^{N/2\alpha}} \int_{\mathbb{R}^{N}} \phi_{0}\bigg(\frac{\xi-\xi'}{a(\tau)^{1/2\alpha}}\bigg) w_{0}(e^{\tau/2\alpha}\xi') \mathrm{d}\xi'.
    \end{align}
    Next, we give the following estimate on the semigroup $S^{\alpha}(\tau)=e^{\tau\mathcal{L}_{\alpha}}$
    \begin{proposition}\label{group-est}
        Let $1\leq q\leq p\leq \infty$, $1/q-1/p>\frac{m\alpha-2\alpha-|\beta|}{N}$, $m\geq 0$ and $T>0$. For all $\beta\in \mathbb{N}^{N}$, there exists $C>0$ such that
        \begin{align}\label{de-group-est}
            |b^{m\alpha}\partial^{\beta}S^{\alpha}(\tau)f|_{p}\leq \frac{C}{a(\tau)^{\frac{N}{2\alpha}(\frac{1}{q}-\frac{1}{p})+\frac{|\beta|}{2\alpha}}}|b^{m\alpha}f|_{q},\quad 0<\tau\leq T,
        \end{align}
        where $b(\xi)=(1+|\xi|^{2})^{\frac{1}{2}}$.
    \end{proposition}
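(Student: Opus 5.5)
The plan is to work directly with the explicit kernel representation \eqref{group-P}. Differentiating under the integral, all $\xi$-derivatives fall on the kernel, since $w_0$ is evaluated at $e^{\tau/2\alpha}\xi'$ and does not depend on $\xi$:
\begin{align*}
\partial^\beta S^\alpha(\tau)f(\xi)=\frac{e^{N\tau/2}}{a(\tau)^{(N+|\beta|)/2\alpha}}\int_{\mathbb R^N}(\partial^\beta\phi_0)\Big(\frac{\xi-\xi'}{a(\tau)^{1/2\alpha}}\Big)f(e^{\tau/2\alpha}\xi')\,\mathrm d\xi'.
\end{align*}
Since $0<\tau\le T$, the factors $e^{N\tau/2}$ and $e^{\tau/2\alpha}$ are bounded above and below. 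The substitution $\eta=e^{\tau/2\alpha}\xi'$ then changes the weighted $L^q$ norm of $f(e^{\tau/2\alpha}\cdot)$ only by a factor comparable to $1$, because $b(e^{-\tau/2\alpha}\eta)\simeq b(\eta)$ uniformly for $\tau\le T$. The problem therefore reduces to a weighted Young-type inequality for convolution with the rescaled kernel $K_a(z)=a^{-(N+|\beta|)/2\alpha}(\partial^\beta\phi_0)(z/a^{1/2\alpha})$, uniformly in $a=a(\tau)\in(0,a(T)]$.

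The key kernel input is the pointwise bound $|\partial^\beta\phi_0(z)|\le C(1+|z|)^{-(N+2\alpha+|\beta|)}$ for the fractional heat kernel. This is the same asymptotic already quoted in the proof of Theorem \ref{sp} from \cite{E.s 70}, and it can also be obtained from the Fourier representation $p^\beta e^{-|p|^{2\alpha}}$. To move the weight through the convolution I would use Peetre's inequality,
\begin{align*}
b(\xi)^{m\alpha}\le C\,b(\xi')^{m\alpha}+C\,|\xi-\xi'|^{m\alpha},
\end{align*}
which splits the estimate into two terms.

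\textbf{Term 1.} Here the weight sits on $f$, and we must bound the convolution of $b^{m\alpha}|f|$ with $|K_a|$. By Young's inequality with $1+1/p=1/r+1/q$ and scaling, this is at most $|K_a|_r\,|b^{m\alpha}f|_q$. The scaling gives $|K_a|_r=a^{-\frac{N}{2\alpha}(\frac1q-\frac1p)-\frac{|\beta|}{2\alpha}}|\partial^\beta\phi_0|_r$, and the right-hand side is finite because $\partial^\beta\phi_0\in L^1\cap L^\infty$.

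\textbf{Term 2.} Here the weight is $|z|^{m\alpha}$ on the kernel, and we need to control the convolution of $|z|^{m\alpha}|K_a|$ with $|f|$.

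I expect the main obstacle to be Term 2, where the exponent condition enters. The kernel $|z|^{m\alpha}|\partial^\beta\phi_0(z)|$ decays only like $|z|^{m\alpha-N-2\alpha-|\beta|}$, so it need not be integrable, but it lies in $L^r$ exactly when $r(N+2\alpha+|\beta|-m\alpha)>N$. Taking the Young exponent $1/r=1-(1/q-1/p)$, this condition reads $1/q-1/p>(m\alpha-2\alpha-|\beta|)/N$, which is precisely the hypothesis. After rescaling, this term carries the factor $a^{m\alpha/2\alpha}=a^{m/2}\le C$ times the same power of $a$ as in Term 1, multiplied by $|f|_q\le|b^{m\alpha}f|_q$. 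When $m\alpha-2\alpha-|\beta|<0$ the kernel is already integrable and there is nothing to check.

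Combining the two terms gives \eqref{de-group-est} with a constant depending on $T$. The case $p=\infty$ or $q=1$ follows from the same Young argument, with only the endpoint integrability of the kernel to verify.
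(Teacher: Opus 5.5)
Your proposal is correct and follows essentially the same route as the paper. Both arguments rest on the same ingredients:
\begin{itemize}
\item the kernel representation \eqref{group-P}, with all derivatives falling on $\phi_0$;
\item transfer of the weight onto the kernel;
\item Young's inequality with $1/r=1-(1/q-1/p)$ and scaling in $a(\tau)$;
\item the decay $|\partial^\beta\phi_0(y)|\lesssim(1+|y|)^{-(N+2\alpha+|\beta|)}$, which produces exactly the stated hypothesis.
\end{itemize}
The only cosmetic difference is in how the weight is moved. You use the additive bound $b(\xi)^{m\alpha}\le C(b(\xi')^{m\alpha}+|\xi-\xi'|^{m\alpha})$ and two Young estimates. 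The paper uses the multiplicative Peetre inequality $b^{m\alpha}(\xi)\le C\,b^{m\alpha}(\xi-\xi')\,b^{m\alpha}(\xi')$ and a single Young estimate with the weighted kernel $b^{m\alpha}\Phi_a$. Both versions lead to the same integrability condition.
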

    \begin{proof}
        Recalling \eqref{group-P}, we write the derivative of the semigroup as a convolution
        \begin{align*}
            \partial^{\beta}S^{\alpha}(\tau)f = e^{\frac{N\tau}{2}} a(\tau)^{-\frac{|\beta|}{2\alpha}} (\Phi_{a} \ast h),
        \end{align*}
        where $h(\cdot) = f(e^{\tau/2\alpha}\cdot)$ and the rescaled kernel is $\Phi_{a}(x) = a(\tau)^{-\frac{N}{2\alpha}} \phi_{\beta}(a(\tau)^{-1/2\alpha}x)$ with $\phi_{\beta} = \partial^{\beta}\phi_{0}$.
    
        Using Peetre's inequality $b^{m\alpha}(\xi) \leq C b^{m\alpha}(\xi-\xi') b^{m\alpha}(\xi')$, we deduce the pointwise bound $|b^{m\alpha}(\Phi_a \ast h)| \le C (|K_a| \ast |b^{m\alpha}h|)$, where $K_{a} := b^{m\alpha}\Phi_{a}$. Applying Young's inequality with $1+\frac{1}{p} = \frac{1}{q} + \frac{1}{r}$, we obtain
        \begin{align*}
            |b^{m\alpha}\partial^{\beta}S^{\alpha}(\tau)f|_{p} \leq C e^{\frac{N\tau}{2}} a(\tau)^{-\frac{|\beta|}{2\alpha}} |K_{a}|_{r} |b^{m\alpha}h|_{q}.
        \end{align*}
        We explicitly estimate the $L^r$-norm of $K_a$, by the scaling $x = a(\tau)^{1/2\alpha}y$ and noting $a(\tau) \le 1$ for $\tau \le T$,
        \begin{align}\label{est-Ka}
            |K_{a}|_{r} &= a(\tau)^{-\frac{N}{2\alpha}(1-\frac{1}{r})} \left(\int_{\mathbb{R}^{N}} (1+a(\tau)^{1/\alpha}|y|^2)^{\frac{rm\alpha}{2}} |\phi_{\beta}(y)|^r \mathrm{d}y\right)^{\frac{1}{r}} \nonumber \\
            &\leq C a(\tau)^{-\frac{N}{2\alpha}(\frac{1}{q}-\frac{1}{p})}
        \end{align}
        provided the integral converges. Similarly, for the data term, the change of variables $y = e^{\tau/2\alpha}\xi'$ yields
        \begin{align}\label{est-h}
            |b^{m\alpha}h|_{q} \leq e^{-\frac{N\tau}{2\alpha q}} |b^{m\alpha}f|_{q}.
        \end{align}
        Combining estimates \eqref{est-Ka} and \eqref{est-h}, and observing that the exponential factors are bounded on $[0, T]$, we arrive at the desired estimate. 
    
        Finally, the convergence of the integral in \eqref{est-Ka} requires the asymptotic decay $\phi_{\beta}(y) \sim |y|^{-(N+2\alpha+|\beta|)}$. The condition for finiteness is $r(N+2\alpha+|\beta|-m\alpha) > N$, which, upon substituting $\frac{1}{r} = 1 - (\frac{1}{q}-\frac{1}{p})$, is equivalent to $\frac{1}{q}-\frac{1}{p} > \frac{m\alpha-2\alpha-|\beta|}{N}$. This matches the hypothesis.
    \end{proof}
    
    To derive convenient semigroup bounds from the spectral information in Theorem \ref{sp}, we apply a constant spectral shift to the generator. Let $c=\frac{N}{2\alpha}-\frac{N}{2}$ and set $\tilde {\mathcal{L}}_\alpha:=\mathcal{L}_\alpha+cI$. Then $\tilde S_\alpha(\tau)=e^{\tau\tilde {\mathcal{L}}_\alpha}=e^{c\tau}S_\alpha(\tau)$, $\sigma(\tilde {\mathcal{L}}_\alpha)=\sigma(\mathcal{L}_\alpha)+c$, so all semigroup estimates for $S_\alpha$ are equivalent to those for $\tilde S_\alpha$ up to the factor $e^{c\tau}$, and the associated Riesz projections are unchanged (up to translating the contour).
    \begin{proposition}\label{spectral es}
        \begin{enumerate}
            \item Fix $m\geq 0$, and take $n\in \mathbb{Z}$ such that $n+\frac{N}{2}<m\alpha\leq n+1+\frac{N}{2}$. For all $\beta\in \mathbb{N}^{N}$ satisfying $m\alpha<\tfrac{N}{2}+2\alpha+|\beta|$ and all $\varepsilon>0$, there exists $C>0$ such that 
            \begin{align}\label{B.12}
                \|\partial^{\beta}\tilde{S}_{\alpha}(\tau)\tilde{Q}_{n}f\|_{m\alpha}\leq \frac{C}{a(\tau)^{\frac{|\beta|}{2\alpha}}}\mathrm{exp}(\tfrac{\tau}{2\alpha}(\tfrac{N}{2}-m\alpha+\varepsilon))\|f\|_{m\alpha},
            \end{align}
            for all $f\in L^{2}(m\alpha)$ and all $\tau>0$.
            \item Fix $n\in\mathbb{N}\ \cup\ \{-1\}$, and take $m\in \mathbb{R}$ such that $m\alpha>n+1+\frac{N}{2}$. For all $\beta\in \mathbb{N}^{N}$ such that $m\alpha<\tfrac{N}{2}+2\alpha+|\beta|$, there exists $C>0$ such that 
            \begin{align}\label{B.13}
                \|\partial^{\beta}\tilde{S}_{\alpha}(\tau)\tilde{Q}_{n}f\|_{m\alpha}\leq \frac{C}{a(\tau)^{\frac{|\beta|}{2\alpha}}}\mathrm{exp}(-\tfrac{n+1}{2\alpha}\tau)\|f\|_{m\alpha},
            \end{align}
            for all $f\in L^{2}(m\alpha)$ and all $\tau>0$.
        \end{enumerate}
    \end{proposition}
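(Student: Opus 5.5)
The plan is to work entirely in Fourier variables with the shifted operator $\tilde{\mathcal L}_\alpha=\mathcal L_\alpha+c$. By \eqref{group-F}, its semigroup acts as
\begin{align*}
\widehat{\tilde S_\alpha(\tau)f}(k)=e^{\frac{N\tau}{2\alpha}\cdot 0}\,e^{-a(\tau)|k|^{2\alpha}}\,e^{-\frac{N\tau}{2\alpha}}\cdots\widehat f(ke^{-\tau/2\alpha}),
\end{align*}
up to the explicit constant factor produced by the shift. The weight $b^{m\alpha}$ in physical space corresponds to $(1-\Delta_k)^{m\alpha/2}$ on the Fourier side, so $\|f\|_{m\alpha}$ is equivalent to the $H^{m\alpha}$ norm of $\widehat f$. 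I would take $\tilde Q_n=I-\tilde P_n$, where $\tilde P_n$ is the Riesz projection onto the eigenfunctions $\partial^\gamma G_\alpha$ with $|\gamma|\le n$. Explicitly,
\begin{align*}
\tilde P_n f=\sum_{|\gamma|\le n}\frac{(-1)^{|\gamma|}}{\gamma!}\Bigl(\int\xi^\gamma f\,\mathrm d\xi\Bigr)\partial^\gamma G_\alpha,
\end{align*}
which is bounded on $L^2(m\alpha)$ as soon as $m\alpha>n+N/2$. In Fourier terms, $\widehat{\tilde Q_n f}$ is $\widehat f$ minus its Taylor polynomial of order $n$ at $0$, multiplied by $e^{-|k|^{2\alpha}}$. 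Hence $\widehat{\tilde Q_nf}$ vanishes to order $n$ at $k=0$.

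\textbf{Step 1 (commutation).} The semigroup commutes with $\tilde Q_n$. Moreover, the Taylor coefficients of $\tilde S_\alpha(\tau)f$ at $0$ evolve diagonally by $e^{-|\gamma|\tau/2\alpha}$. This follows directly from the formula above, because $e^{-a|k|^{2\alpha}}e^{|k e^{-\tau/2\alpha}|^{2\alpha}}=e^{-|k|^{2\alpha}}$.

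\textbf{Step 2 (large-$\tau$ estimate).} For $\tau\ge1$ I would split $g=\tilde Q_nf$ as $\widehat g=\chi(k)\widehat g+(1-\chi)\widehat g$ with $\chi$ a cutoff near $k=0$.

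For the high-frequency part, the factor $e^{-a(\tau)|k|^{2\alpha}}$ with $a(\tau)\ge a(1)$ damps $|k|\gtrsim e^{\tau/2\alpha}$. The $H^{m\alpha}$ norm of the dilated function then yields decay $e^{-\frac{\tau}{2\alpha}(m\alpha-\frac N2)}$, coming from the scaling $\|h(\cdot e^{-\tau/2\alpha})\|_{\dot H^{s}}=e^{\frac{\tau}{2\alpha}(\frac N2-s)}\|h\|_{\dot H^s}$.

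For the low-frequency part, the vanishing to order $n$ together with the fractional Taylor remainder gives $|\widehat g(k)|\lesssim |k|^{s}\|g\|_{m\alpha}$ with $s=\min(n+1,m\alpha-N/2)^-$, and similarly for derivatives. Dilation then produces the factor $e^{-\frac{s\tau}{2\alpha}}$.

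In case (i), $s=m\alpha-N/2-\varepsilon$, which gives \eqref{B.12}. In case (ii), $s=n+1$ is attained exactly, because $\partial^{n+1}\widehat g$ is bounded when $m\alpha>n+1+N/2$. Equivalently, one can subtract the next Taylor block explicitly; its mode decays exactly like $e^{-(n+1)\tau/2\alpha}$. This gives \eqref{B.13}.

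\textbf{Step 3 (derivatives and small $\tau$).} For $\tau\le1$ I would use Proposition \ref{group-est} with $p=q=2$ and the given $\beta$; the hypothesis $m\alpha<\frac N2+2\alpha+|\beta|$ is exactly what it requires. This yields the factor $a(\tau)^{-|\beta|/2\alpha}$. For $\tau\ge1$ I would write $\partial^\beta\tilde S(\tau)=\partial^\beta\tilde S(1)\tilde S(\tau-1)$ and combine the $\tau\le 1$ bound with Step 2.

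\textbf{Main obstacle.} The hardest part is the sharp low-frequency bound in Step 2. It requires a fractional-order Taylor remainder estimate for $\widehat g\in H^{m\alpha}$ in weighted $L^2$ form, uniform under dilation. I would try to prove it first by a Hardy-type inequality,
\begin{align*}
\int |k|^{-2s}|\widehat g|^2\,\mathrm dk\lesssim\|\widehat g\|_{H^{s}}^2,
\end{align*}
valid for $\widehat g$ vanishing to order $n$ and $s<n+1$. I would then interpolate this with the dilation scaling. The $\varepsilon$-loss in (i) arises precisely when $s$ hits the endpoint $m\alpha-N/2$.
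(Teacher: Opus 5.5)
Your route to the large-time decay differs from the paper's. The paper gets the decay of $\tilde S_\alpha(\tau)\tilde Q_n$ abstractly: it takes the Riesz projection for the isolated eigenvalues of Theorem~\ref{sp}, appeals to spectral mapping and quasi-compactness, and then adds short-time smoothing through $\partial^\beta\tilde S_\alpha(\tau/2)\,\tilde S_\alpha(\tau/2)\tilde Q_n$. You compute directly in Fourier variables. After the shift the formula is exactly $\widehat{\tilde S_\alpha(\tau)f}(k)=e^{-a(\tau)|k|^{2\alpha}}\widehat f(ke^{-\tau/2\alpha})$: the factor $e^{c\tau}$ cancels the prefactor completely, so the extra exponential in your display is spurious. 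On top of this you use the explicit moment projection, dilation and Hardy's inequality. Your version is more self-contained. Spectral mapping fails for general $C_0$-semigroups, and the paper never estimates the essential growth bound its quasi-compactness appeal needs. Your computation instead produces the rates directly and explains the $\varepsilon$-loss in (i) as the integer endpoint of Hardy's inequality. Splitting at $\tau-1$ rather than $\tau/2$ is immaterial. Two steps, however, fail as you state them.

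\textbf{Step 3 misreads Proposition~\ref{group-est}.} With $p=q=2$ that proposition requires $m\alpha<2\alpha+|\beta|$, not $m\alpha<\frac N2+2\alpha+|\beta|$, because the multiplicative Peetre inequality forces $b^{m\alpha}\partial^\beta\phi_0\in L^1$. For example, in case (ii) with $n\ge0$ and $\beta=0$ one has $m\alpha>1+\frac N2$, which for $N=2$ exceeds $2\alpha$. There you have no short-time bound at all, yet you need one both for $\tau\le1$ and for the factor $\partial^\beta\tilde S_\alpha(1)$ in your composition. The repair, which is presumably what the paper's ``local/far-field decomposition'' refers to, has two ingredients:
\begin{itemize}
\item the additive inequality $b(\xi)^{m\alpha}\le C\bigl(b(\eta)^{m\alpha}+|\xi-\eta|^{m\alpha}\bigr)$;
\item the embedding $L^2(m\alpha)\hookrightarrow L^1$, valid for $m\alpha>\frac N2$.
\end{itemize}
The extra kernel term then costs $a(\tau)^{(m\alpha-N/2)/2\alpha}\,\bigl\|\,|y|^{m\alpha}\partial^\beta\phi_0\bigr\|_{L^2}$. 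This is finite exactly when $m\alpha<\frac N2+2\alpha+|\beta|$, and for $N=2$ it covers every case where the two conditions differ.

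\textbf{Your projection claims ignore the non-smoothness of $\widehat{G_\alpha}$.} The symbol $\widehat{G_\alpha}=e^{-|k|^{2\alpha}}$ is only $C^{1,2\alpha-1}$ at $k=0$. So $G_\alpha$ has finite moments only up to order one, and it lies in $L^2(m\alpha)$ only if $m\alpha<\frac N2+2\alpha$.
\begin{itemize}
\item For $n\ge0$, $\tilde P_n$ is bounded on $L^2(m\alpha)$ only under this extra condition. Together with $m\alpha>n+\frac N2$, it forces $n\le1$.
\item The diagonal evolution of the Taylor coefficients of $\tilde S_\alpha(\tau)f$ comes from $1-e^{-a|k|^{2\alpha}}=O(|k|^{2\alpha})=o(|k|^n)$, which holds only for $n\le1$. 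The identity you cite only shows that the $\partial^\gamma G_\alpha$ are eigenfunctions.
\item For $m\alpha\ge\frac N2+2\alpha$ the semigroup does not even map $L^2(m\alpha)$ into itself, since its output has tails $\sim(\int f)\,|\xi|^{-N-2\alpha}$. Your composition with $\tilde S_\alpha(\tau-1)\tilde Q_n$, like the paper's $\tau/2$ splitting, is then meaningless, even though the hypothesis $m\alpha<\frac N2+2\alpha+|\beta|$ formally allows it when $|\beta|\ge1$.
\end{itemize}
This is partly a defect of the statement itself. Your proof should nevertheless say explicitly that it runs only for $m\alpha<\frac N2+2\alpha$, that is, $n\le1$ in (i) and $n\in\{-1,0\}$ in (ii). In that range, with the Step 3 repair, your argument should go through.
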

    \begin{proof}[Proof sketch of Proposition B.5]
        Let $\tilde{P}_n$ be the Riesz projection associated with the isolated eigenvalues $\{0,-\frac1{2\alpha},\dots,-\frac n{2\alpha}\}$ of $\tilde{\mathcal{L}}_\alpha$ in $L^2(m\alpha)$ (from the spectral description in Theorem \ref{sp}), and set $\tilde{Q}_{n}:=I-\tilde{P}_{n}$.

        By the Riesz projection formula together with the spectral mapping theorem for $C_0$-semigroups, one obtains
        \begin{align*}
            \|\tilde{S}_\alpha(\tau)\tilde{Q}_n\|_{\mathcal L(L^2(m\alpha))} \le C_\varepsilon \exp\!\Big(\frac{\tau}{2\alpha}\big(\tfrac N2-m\alpha+\varepsilon\big)\Big) \quad\text{if } n+\tfrac N2<m\alpha\le n+1+\tfrac N2,
        \end{align*}
        and
        \begin{align*}
            \|\tilde{S}_\alpha(\tau)\tilde{Q}_n\|_{\mathcal L(L^2(m\alpha))}\le C \exp\!\Big(-\frac{n+1}{2\alpha}\tau\Big) \quad\text{if } m\alpha>n+1+\tfrac N2,
        \end{align*}
        see e.g. \cite[Chapter V]{K.R 00} or the model argument in \cite[Appendix A]{TC 02}, as a standard consequence of the spectral decomposition for quasi-compact semigroups.

        Moreover, the short-time smoothing bound
        \begin{align*}
            \|\partial^\beta \tilde{S}_\alpha(\tau)g\|_{m\alpha}\le C\,a(\tau)^{-\frac{|\beta|}{2\alpha}}\,\|g\|_{m\alpha}, \qquad a(\tau)=1-e^{-\tau},
        \end{align*}
        follows from the kernel representation of $S_\alpha(\tau)$ and the argument of Proposition \ref{group-est} (applied via a local or far-field decomposition, without imposing $p=q=2$).

        Finally, since $Q_n$ commutes with $S_\alpha(\tau)$, the semigroup property yields
        \begin{align*}
            \partial^\beta \tilde{S}_\alpha(\tau)\tilde{Q}_nf =\bigl(\partial^\beta \tilde{S}_\alpha(\tfrac{\tau}{2})\bigr)\,\bigl(\tilde{S}_\alpha(\tfrac{\tau}{2})\tilde{Q}_nf\bigr),
        \end{align*}
        and combining the previous two estimates gives \eqref{B.12}--\eqref{B.13}. The details are routine and therefore omitted.
    \end{proof}

    \section{Appendix C. Some useful result}
    Weighted $L^2\to L^q$ embedding in $\mathbb{R}^N$.

    \begin{proposition}\label{prop:L2mq_to_Lq}
        Let $N\ge1$ and assume $m,\alpha>0$. Fix $q\in(1,2]$. If
        \begin{equation}\label{eq:cond}
            \frac{2m\alpha\,q}{2-q}>N \quad\Longleftrightarrow\quad q>\frac{2N}{2m\alpha+N} \quad\Longleftrightarrow\quad m\alpha>N\!\left(\frac1q-\frac12\right),
        \end{equation}
        then there exists a constant $C=C(N,m,\alpha,q)>0$ such that
        \begin{equation}\label{eq:embedding}
            \|f\|_{L^q(\mathbb{R}^N)} \ \le\ C\,\|f\|_{m\alpha}\qquad \forall\,f\in L^{2}(m\alpha).
        \end{equation}
        In particular, the embedding $L^{2}(m\alpha)\hookrightarrow L^{q}(\mathbb{R}^N)$ is continuous.
    \end{proposition}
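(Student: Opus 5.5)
The plan is to write $|f|=b^{-m\alpha}\cdot(b^{m\alpha}|f|)$ and apply H\"older's inequality, so that the whole question reduces to the integrability of a negative power of the weight $b(\xi)=(1+|\xi|^2)^{1/2}$. With $q\in(1,2)$, take exponents $2/q$ and $2/(2-q)$, which are conjugate, and compute
\begin{align*}
\int_{\mathbb{R}^N}|f|^q\,\mathrm{d}\xi=\int_{\mathbb{R}^N} b^{-m\alpha q}\,\bigl(b^{m\alpha}|f|\bigr)^q\,\mathrm{d}\xi
\le \Bigl(\int_{\mathbb{R}^N} b^{2m\alpha}|f|^2\,\mathrm{d}\xi\Bigr)^{q/2}\Bigl(\int_{\mathbb{R}^N} b^{-\frac{2m\alpha q}{2-q}}\,\mathrm{d}\xi\Bigr)^{\frac{2-q}{2}}.
\end{align*}
The first factor equals $\|f\|_{m\alpha}^q$. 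Taking $q$-th roots then gives \eqref{eq:embedding} with
\begin{align*}
C=\Bigl(\int_{\mathbb{R}^N}(1+|\xi|^2)^{-\frac{m\alpha q}{2-q}}\,\mathrm{d}\xi\Bigr)^{\frac{2-q}{2q}}.
\end{align*}

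It remains to check that this constant is finite. Since $b\ge1$, the integrand is bounded near the origin. For $|\xi|\ge1$, pass to polar coordinates: the integrand behaves like $r^{-\frac{2m\alpha q}{2-q}}r^{N-1}$. This is integrable at infinity exactly when $\frac{2m\alpha q}{2-q}>N$, which is the hypothesis \eqref{eq:cond}. I will also verify the stated equivalences in \eqref{eq:cond}, which follow by elementary algebra. Clearing denominators gives $2m\alpha q>N(2-q)$, that is, $q(2m\alpha+N)>2N$. Dividing by $q$ instead gives $2m\alpha>N(2/q-1)$, that is, $m\alpha>N(1/q-1/2)$.

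The endpoint $q=2$ has to be handled separately, since the H\"older exponent $2/(2-q)$ degenerates there. It is immediate: $b\ge1$ gives $|f|_2\le\|f\|_{m\alpha}$, so the inequality holds with $C=1$. In this case the condition in \eqref{eq:cond} should be read as its third form, $m\alpha>0$, which holds by assumption.

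Continuity of the embedding follows because the map is linear and bounded. There is no real obstacle in this argument; the only points requiring care are the degenerate case $q=2$ and tracking the exponent $\frac{2m\alpha q}{2-q}$ correctly.
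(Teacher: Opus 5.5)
Your proposal is correct and follows essentially the same route as the paper: H\"older with the conjugate pair $(2/q,\,2/(2-q))$ reduces the estimate to integrability of $(1+|\xi|^2)^{-m\alpha q/(2-q)}$, which holds exactly when $\frac{2m\alpha q}{2-q}>N$, and you get the same constant. Your separate treatment of the endpoint $q=2$ matches the remark the paper places right after its proof.
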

    
    \begin{lemma}\label{lem:bracket_integrability}
        For $\gamma>0$ one has
        \begin{align*}
            \int_{\mathbb{R}^N}(1+|x|^{2})^{-\gamma/2}\mathrm{d}x<\infty \quad\Longleftrightarrow\quad \gamma>N.
        \end{align*}
    \end{lemma}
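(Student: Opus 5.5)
The statement to prove is Lemma \ref{lem:bracket_integrability}: for $\gamma>0$, $\int_{\mathbb{R}^N}(1+|x|^2)^{-\gamma/2}\,\mathrm{d}x<\infty$ if and only if $\gamma>N$. The plan is to reduce to a one-dimensional radial integral and compare with a power of $r$ near infinity.

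First, pass to polar coordinates. The integrand is radial, continuous and positive, so
\begin{align*}
\int_{\mathbb{R}^N}(1+|x|^{2})^{-\gamma/2}\,\mathrm{d}x=|\mathbb{S}^{N-1}|\int_0^\infty (1+r^2)^{-\gamma/2}\,r^{N-1}\,\mathrm{d}r,
\end{align*}
where $|\mathbb{S}^{N-1}|$ denotes the surface measure of the unit sphere. For $N=1$ this is read as $2\int_0^\infty$.

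Second, split the integral at $r=1$.
\begin{itemize}
\item On $[0,1]$ the integrand is bounded by $r^{N-1}\le 1$, so this piece is always finite.
\item On $[1,\infty)$ we have $r^2\le 1+r^2\le 2r^2$, hence
\begin{align*}
2^{-\gamma/2}\,r^{-\gamma}\le (1+r^2)^{-\gamma/2}\le r^{-\gamma}.
\end{align*}
\end{itemize}
The tail is therefore comparable, up to constants depending only on $\gamma$, to $\int_1^\infty r^{N-1-\gamma}\,\mathrm{d}r$.

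Third, evaluate the model integral. $\int_1^\infty r^{N-1-\gamma}\,\mathrm{d}r$ converges exactly when $N-1-\gamma<-1$, that is, when $\gamma>N$. It diverges, logarithmically or as a power, when $\gamma\le N$. The upper comparison gives sufficiency and the lower comparison gives necessity.

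None of the steps is a real obstacle. The only care needed is to use the two-sided bound on $1+r^2$ so that both directions of the equivalence follow.
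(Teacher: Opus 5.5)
Your proposal is correct and follows essentially the same route as the paper: the bounded contribution near the origin is discarded, and the tail is reduced via polar coordinates and $(1+|x|^2)^{1/2}\sim|x|$ to $\int_1^\infty r^{N-1-\gamma}\,\mathrm{d}r$. Your explicit two-sided bound $2^{-\gamma/2}r^{-\gamma}\le(1+r^2)^{-\gamma/2}\le r^{-\gamma}$ on $[1,\infty)$ simply makes precise the paper's ``$\sim$'', which is what justifies both directions of the equivalence.
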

    
    \begin{proof}
        Split $\mathbb{R}^N=B_1(0)\cup B_1(0)^c$. On $B_1(0)$ the integrand is bounded and the
        integral is finite. On $B_1(0)^c$ one has $(1+|x|^{2})^{1/2}\sim |x|$, hence by polar
        coordinates
        \begin{align*}
            \int_{B_1(0)^c}(1+|x|^{2})^{-\gamma/2}\mathrm{d}x\ \sim\ \int_1^\infty r^{N-1-\gamma}\mathrm{d}r,
        \end{align*}
        which converges iff $\gamma>N$.
        \end{proof}
    
        \begin{proof}[Proof of Proposition \ref{prop:L2mq_to_Lq}]
        Fix $q\in(1,2]$ and set
        \begin{align*}
            r=\frac{2}{q}\ \ge 1, \qquad r'=\frac{2}{2-q}\ \in[1,\infty], \qquad \frac1r+\frac1{r'}=1.
        \end{align*}
        Observe the algebraic factorization $|f|^q=\bigl(|f|^2 b^{2m\alpha}\bigr)^{q/2}\,b^{-m\alpha q}$. Applying H\"older's inequality with exponents $(r,r')$ yields
        \begin{align*}
            \int_{\mathbb{R}^N}|f|^q &\le \Bigl(\int_{\mathbb{R}^N}|f|^2 b^{2m\alpha}\Bigr)^{q/2}
            \Bigl(\int_{\mathbb{R}^N} b^{-m\alpha q r'}\Bigr)^{1/r'}\\
            &= \|f\|_{m\alpha}^{\,q}\, \Bigl(\int_{\mathbb{R}^N} (1+|x|^{2})^{-\gamma/2}\mathrm{d}x\Bigr)^{1/r'}, \qquad \gamma=m\alpha q r'=\frac{2m\alpha q}{2-q}.
        \end{align*}
        By Lemma \ref{lem:bracket_integrability}, the latter integral is finite iff $\gamma>N$, which is exactly condition \eqref{eq:cond}. Taking the $q$-th root gives \eqref{eq:embedding} with $C=C(N,m,\alpha,q)=(\int_{\mathbb{R}^N}(1+|x|^{2})^{-\gamma/2}\mathrm{d}x)^{\frac{1}{q}(1-\frac{q}{2})}$. This completes the proof.
    \end{proof}
 
    \begin{remark}
        (i) At the endpoint $q=2$ one trivially has $|f|_{2}\le\|f\|_{m\alpha}$, independently of $m,\alpha$.
        (ii) For $q=1$, condition \eqref{eq:cond} reads $2m\alpha>N$ (e.g. $m\alpha>1$ when $N=2$).
    \end{remark}

    Now, we will establish the estimate \eqref{f-esti}. According to \eqref{F-f}, for any $f,g\in \mathcal{S}'(\mathbb{R}^{N})$, the bilinear form is given by
    \begin{align*}
        \mathcal{E}(f,g) := (f, (-\Delta)^\alpha g) = \frac{C_{N,\alpha}}{2} \iint_{\mathbb{R}^{N}\times\mathbb{R}^{N}} \frac{(f(\xi)-f(\eta))(g(\xi)-g(\eta))}{|\xi-\eta|^{N+2\alpha}} \mathrm{d}\xi\mathrm{d}\eta.
    \end{align*}
    Setting $\psi(\xi)=|\xi|^{2m\alpha}$ and choosing $f=\psi w$ and $g=w$, we utilize the algebraic identity
    \begin{align*}
        \psi(\xi)w(\xi)-\psi(\eta)w(\eta) = \frac{\psi(\xi)+\psi(\eta)}{2}(w(\xi)-w(\eta)) + \frac{\psi(\xi)-\psi(\eta)}{2}(w(\xi)+w(\eta)),
    \end{align*}
    we obtain the decomposition
    \begin{align*}
        \int_{\mathbb{R}^{N}} |\xi|^{2m\alpha} w (-\Delta)^\alpha w \,\mathrm{d}\xi = \mathcal{D} + \mathcal{V},
    \end{align*}
    where
    \begin{align*}
        \mathcal{D} &= \frac{C_{N,\alpha}}{4} \iint_{\mathbb{R}^{N}\times \mathbb{R}^{N}} \frac{(\psi(\xi)+\psi(\eta))(w(\xi)-w(\eta))^2}{|\xi-\eta|^{N+2\alpha}} \mathrm{d}\xi\mathrm{d}\eta, \\
        \mathcal{V} &= \frac{C_{N,\alpha}}{4} \iint_{\mathbb{R}^{N}\times\mathbb{R}^{N}} \frac{(\psi(\xi)-\psi(\eta))(w^2(\xi)-w^2(\eta))}{|\xi-\eta|^{N+2\alpha}} \mathrm{d}\xi\mathrm{d}\eta.
    \end{align*}
    To estimate $\mathcal{D}$, we invoke the elementary inequality 
    \begin{align*}
        (\sqrt{A}a-\sqrt{B}b)^2 \le (A+B)(a-b)^2 + \frac{(A-B)^2}{A+B}(a^2+b^2), \quad \text{for } A,B>0.
    \end{align*}
    Setting $\tilde{w}(\xi) = \sqrt{\psi(\xi)}w(\xi) = |\xi|^{m\alpha}w(\xi)$, we deduce that
    \begin{align*}
        [\tilde{w}]_{W^{\alpha,2}}^2 \le C \mathcal{D} + C \mathcal{J}_1, 
    \end{align*}
    where the error term $\mathcal{J}_1$ is defined by
    \begin{align*}
        \mathcal{J}_1 = \iint_{\mathbb{R}^{N}\times\mathbb{R}^{N}} \frac{|\psi(\xi)-\psi(\eta)|^2}{\psi(\xi)+\psi(\eta)} \frac{|w(\xi)|^2+|w(\eta)|^2}{|\xi-\eta|^{N+2\alpha}} \mathrm{d}\xi\mathrm{d}\eta.
    \end{align*}
    For the term $\mathcal{V}$, applying Young's inequality yields $|\mathcal{V}| \le \varepsilon \mathcal{D} + C_\varepsilon \mathcal{J}_2$, where $\mathcal{J}_2$ is defined similarly to $\mathcal{J}_1$ but involves $|w(\xi)+w(\eta)|^2$. Noting that $\mathcal{J}_2$ shares the same singular structure as $\mathcal{J}_1$, it suffices to estimate $\mathcal{J}_1$.

    We split the domain of integration into the near-field $\mathcal{N} = \{|\xi-\eta|\le 1\}$ and the far-field $\mathcal{F} = \{|\xi-\eta|>1\}$. 

    In the region $\mathcal{N}$, using the gradient estimate $|\nabla \psi(\zeta)| \le C|\zeta|^{2m\alpha-1}$, we have
    \begin{align*}
        \frac{|\psi(\xi)-\psi(\eta)|^2}{\psi(\xi)+\psi(\eta)} \le C |\xi-\eta|^2 (|\xi|+|\eta|)^{2m\alpha-2}.
    \end{align*}
    Thus, the local part $\mathcal{J}_{1,\mathcal{N}}$ is bounded by
    \begin{align*}
        \mathcal{J}_{1,\mathcal{N}} \le C \int_{\mathbb{R}^N} |w(\xi)|^2 \left( \int_{|z|\le 1} \frac{(1+|\xi|)^{2m\alpha-2}}{|z|^{N+2\alpha-2}} \mathrm{d}z \right) \mathrm{d}\xi.
    \end{align*}
    Since $\alpha < 1$, the inner integral converges. By interpolation, for any $\varepsilon > 0$,
    \begin{align*}
        \mathcal{J}_{1,\mathcal{N}} \le C \int_{\mathbb{R}^N} (1+|\xi|^{2m\alpha-2})|w(\xi)|^2 \mathrm{d}\xi \le \varepsilon |\tilde{w}|_{2}^2 + C_\varepsilon |w|_{2}^2.
    \end{align*}
    In the region $\mathcal{F}$, using the trivial bound $|\psi(\xi)-\psi(\eta)| \le \psi(\xi)+\psi(\eta)$, we simply have
    \begin{align*}
        \mathcal{J}_{1,\mathcal{F}} \le C \int_{\mathbb{R}^N} \psi(\xi)|w(\xi)|^2 \left( \int_{|z|>1} \frac{1}{|z|^{N+2\alpha}} \mathrm{d}z \right) \mathrm{d}\xi \le C |\tilde{w}|_{2}^2.
    \end{align*}
    Combining these estimates yields the desired coercivity estimate \eqref{f-esti}.
\end{appendix}

 \section*{Acknowledge}
This work was supported by the National Natural Science Foundation of China Grant  No.12301297.\\
\section*{Conflict of Interest}
The authors declare that they have no conflict of interest.

\section*{Data Availability}
Data sharing not applicable to this article as no datasets were generated or analysed during the current study.

\end{document}